\theoremstyle{plain}
\newtheorem{theorem}{Theorem}[section]
\newtheorem{lemma}[theorem]{Lemma}
\newtheorem{corollary}[theorem]{Corollary}
\newtheorem*{ncorollary}{Corollary}
\newtheorem{proposition}[theorem]{Proposition}
\newtheorem{claim}{Claim}
		\newtheorem*{theorema}{Theorem A}
		\newtheorem*{theoremb}{Theorem B}
		\newtheorem*{theoremc}{Theorem C}
		\newtheorem*{nclaim}{Claim}
		\newtheorem{fact}[theorem]{Fact}
\theoremstyle{definition}
		\newtheorem{definition}[theorem]{Definition}
\theoremstyle{remark}
		\newtheorem*{remark}{Remark}
		\newtheorem{example}[theorem]{Example}
\newcommand{\A}{{\mathbb{A}}}
\newcommand{\C}{{\mathbb{C}}}
\newcommand{\G}{{\mathbb{G}}}
\renewcommand{\H}{{\mathbb{H}}}
\newcommand{\Q}{{\mathbb{Q}}}
\newcommand{\R}{{\mathbb{R}}}
\renewcommand{\S}{{\Sigma}}
\newcommand{\Z}{{\mathbb{Z}}}
\newcommand{\Cb}{{\mathbf{C}}}
\newcommand{\Fcal}{{\mathcal{F}}}
\newcommand{\Scal}{{\mathcal{S}}}
\DeclareMathOperator{\Aut}{Aut}
\DeclareMathOperator{\Gal}{Gal}
\DeclareMathOperator{\Hom}{Hom}
\DeclareMathOperator{\sheafhom}{\mathcal{H}\kern -.5pt \emph{om}}
\DeclareMathOperator{\Mod}{Mod}
\DeclareMathOperator{\Out}{\textup{Out}}
\DeclareMathOperator{\PGL}{PGL}
\DeclareMathOperator{\PSL}{PSL}
\DeclareMathOperator{\PSU}{PSU}
\DeclareMathOperator{\Rep}{Rep}
\DeclareMathOperator{\SL}{SL}
\DeclareMathOperator{\SO}{SO}
\DeclareMathOperator{\SU}{SU}
\DeclareMathOperator{\Sp}{Sp}
\DeclareMathOperator{\Spec}{Spec}
\DeclareMathOperator{\tw}{tw}
\DeclareMathOperator{\tr}{tr}
\newcommand{\git}{\mathbin{
  \mathchoice{/\mkern-6mu/}
    {/\mkern-6mu/}
    {/\mkern-5mu/}
    {/\mkern-5mu/}}}
\newcommand{\slc}{{\SL (2,\mathbb{C})}}
\newcommand\HHH{{\mathbb H}}
\begin{document}

\title[Surface group representations with finite  orbits]{Surface group representations
in ${\rm SL}_2({\mathbb C})$ with finite mapping class orbits}

\author[I. Biswas]{Indranil Biswas}

\address{School of Mathematics, Tata Institute of Fundamental Research, Homi 
Bhabha Road, Mumbai 400005, India}

\email{indranil@math.tifr.res.in}

\author[S. Gupta]{Subhojoy Gupta}

\address{Department of Mathematics, Indian Institute of Science,
Bangalore, India}

\email{subhojoy@gmail.com}

\author[M. Mj]{Mahan Mj}

\address{School of Mathematics, Tata Institute of Fundamental Research, Homi 
Bhabha Road, Mumbai 400005, India}

\email{mahan@math.tifr.res.in}

\author[J. P. Whang]{Junho Peter Whang}

\address{Department of Mathematics,
Massachusetts Institute of Technology, 
Office: Building 2 Room 2-238a, 77 Massachusetts Avenue
Cambridge, MA 02139-4307, USA}

\email{jwhang@mit.edu}

\subjclass[2000]{Primary: 57M50; Secondary: 57M05, 20E36, 20F29}

\keywords{Character variety; surface group; mapping class group}

\date{\today}

\begin{abstract}
Given an oriented surface of positive genus with finitely many punctures, we classify the finite
orbits of the mapping class group action on the moduli space of semisimple complex special linear two dimensional 
representations of the fundamental group of the surface. For surfaces of genus at least two,
such orbits correspond to  homomorphisms with finite image. For genus one, they correspond
to the finite or special dihedral representations. We also obtain an analogous result for bounded
orbits in the moduli space.
\end{abstract}

\maketitle
\setcounter{tocdepth}{2}
\tableofcontents

\section{Introduction} \label{sect:1}

Let $\Sigma$ be an oriented surface of genus $g\geq0$ with a finite set $\Fcal$ of punctures. 
The \emph{${\rm SL}_2({\mathbb C})$-character variety} of $\Sigma$ 
$$X(\Sigma)\,=\,\Hom(\pi_1(\Sigma),\, {\rm SL}_2({\mathbb C}))\git {\rm SL}_2({\mathbb C})$$ is an 
affine algebraic variety whose complex points parametrize the conjugacy classes of semisimple 
representations $\pi_1(\Sigma)\,\to\,\SL_2(\C)$ of the fundamental group of $\Sigma$. Let 
$\Mod(\Sigma)$ denote the pure mapping class group of $\Sigma$ fixing $\Fcal$ pointwise. The 
group acts on the moduli space $X(\Sigma)$ by precomposition. This paper classifies the 
finite orbits of this action for surfaces of positive genus.

Our analysis divides into the cases of genus one and higher. For surfaces of genus at least two, we prove the following.

\begin{theorema}\label{tha}
Let $\Sigma$ be an oriented surface of genus $g\geq2$ with $n\geq 0$ punctures. A semisimple
representation $\rho\,:\,\pi_1(\Sigma)\,\to\,\SL_2(\C)$ has finite mapping class group orbit in the
character variety $X(\Sigma)$ if and only if $\rho$ is finite.
\end{theorema}

To describe the corresponding result for surfaces of genus $1$, define an irreducible 
representation $\rho:\pi_1(\Sigma)\to\SL_2(\C)$ to be \emph{special dihedral} if
\begin{itemize}
\item its image lies 
in the infinite dihedral group $D_\infty$ in $\SL_2(\C)$ (its definition is recalled in
Section \ref{sect:2}), and

\item there is a nonseparating simple closed curve $a$ in $\Sigma$ such that the restriction of 
$\rho$ to the complement $\Sigma\backslash a$ is diagonal.
\end{itemize}

\begin{theoremb}\label{thb}
Let $\Sigma$ be an oriented surface of genus $1$ with $n\geq0$ punctures. A semisimple
representation $\rho\,:\,\pi_1(\Sigma)\,\to\,\SL_2(\C)$ has finite mapping class group orbit in
$X(\Sigma)$ if and only if $\rho$ is finite or special dihedral up to conjugacy.
\end{theoremb}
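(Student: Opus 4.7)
The plan is to treat the ``if'' and ``only if'' directions separately, with the ``only if'' direction split by reducibility of $\rho$ and by whether every nonseparating simple closed curve has torsion image. For the ``if'' direction, if $\rho$ has image in a finite subgroup $F\subset\SL_2(\C)$, there are only finitely many conjugacy classes of homomorphisms $\pi_1(\Sigma)\to F$, so the orbit is finite. If $\rho$ is special dihedral, the locus of special dihedral characters in $X(\Sigma)$ is a finite union of one-parameter strata indexed by the $\Mod(\Sigma)$-conjugacy class of the distinguished nonseparating simple closed curve $a$, and an explicit computation with the Dehn twist action on trace coordinates shows that $\Mod(\Sigma)$ permutes the strata with finite orbits.

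For the ``only if'' direction in the reducible case, write $\rho$ as diagonal, factoring through $H_1(\Sigma;\Z)$ with eigenvalues $(\alpha,\beta,\gamma_1,\ldots,\gamma_n)$ on the generators $a,b,c_i$. For $n=0$, $\Mod(\Sigma)\cong\SL_2(\Z)$ acts on $(\alpha,\beta)\in(\C^*)^2$, and finite orbits require roots of unity. For $n\geq 1$, in addition to the $\SL_2(\Z)$-action on topological eigenvalues, Dehn twists about nonseparating simple closed curves with nontrivial puncture contributions in their $H_1$-class (constructed by threading a parallel copy of $a$ through a puncture) act on $H_1$ via the Picard--Lefschetz formula and mix topological and puncture eigenvalues. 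Iterating such $T_\gamma^k$ on the generator $b$ produces an eigenvalue trajectory of the form $\beta\cdot(\alpha\gamma_i^{\pm 1})^{-k}$; finiteness of the orbit forces $\alpha\gamma_i^{\pm 1}$ to be a root of unity, and varying the curves and punctures shows all of $\alpha,\beta,\gamma_i$ are roots of unity, so $\rho$ is finite.

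For the irreducible case, pick a nonseparating simple closed curve $a$ with $\rho(a)$ conjugate to $\diag(\lambda,\lambda^{-1})$. If $\lambda$ is not a root of unity for some such $a$, then for any simple closed curve $b$ with $|a\cap b|=1$, the Dehn twist formula
$$ \tr\rho(T_a^k b) \;=\; \lambda^k M_{11} + \lambda^{-k} M_{22}, \qquad k\in\Z, $$
with $M=P^{-1}\rho(b)P$ the matrix of $\rho(b)$ in the eigenbasis of $\rho(a)$, combined with the finite-orbit hypothesis forces $M_{11}=M_{22}=0$, so $\rho(b)$ is anti-diagonal. Analogous Dehn twist arguments applied to curves linking punctures force each $\rho(c_i)$ to lie in the torus $T_a$ containing $\rho(a)$. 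Hence the image of $\rho$ lies in the normalizer of $T_a$, which is dihedral in $\SL_2(\C)$; the induced class in $H^1(\Sigma;\Z/2)$ is then dual to $[a]$, so $\rho$ restricted to $\pi_1(\Sigma\setminus a)$ is diagonal, and $\rho$ is either a finite dihedral representation or special dihedral.

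The main obstacle is the remaining sub-case where $\rho$ is irreducible and $\rho(c)$ is torsion for every nonseparating simple closed curve $c$. For the once-punctured torus $\Sigma_{1,1}$, this follows from the classical classification of finite orbits of the $\Mod(\Sigma_{1,1})$-action on $X(\Sigma_{1,1})\cong\C^3$ (Dubrovin--Mazzocco; Cantat--Loray), which identifies the irreducible torsion-only finite orbits with representations into the platonic finite subgroups of $\SL_2(\C)$. For $\Sigma_{1,n}$ with $n\geq 2$, the strategy is to restrict $\rho$ to $\Sigma_{1,1}$-subsurfaces of $\Sigma$ and combine the subsurface classification with the constraints from the finite orbits of puncture-loop traces, proceeding by induction on $n$ via the forgetful map $\Sigma_{1,n}\to\Sigma_{1,n-1}$ to bootstrap finite image on a subsurface to finite image on all of $\Sigma$.
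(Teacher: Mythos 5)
Your treatment of the easy parts broadly matches the paper: the reducible case via Dehn twists on diagonal characters is Lemma \ref{redlemm}, and the dichotomy ``some nonseparating curve has non-torsion semisimple monodromy $\Rightarrow$ $\rho$ is dihedral, hence special dihedral or finite'' is essentially case (a) of Lemma \ref{type1} (though you should also handle the parabolic case $\rho(a)=\pm\left[\begin{smallmatrix}1&x\\0&1\end{smallmatrix}\right]$, which the paper treats separately, and your ``if'' direction for special dihedral is asserted rather than proved --- the paper's Lemma \ref{dilem} does real work here, via an induction on the intersection number with the distinguished curve $a_0$ using the trace relation $\tr\rho(b)=\tr\rho(c)\tr\rho(c')-\tr\rho(b')$ and Lemma \ref{intlem}).

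The genuine gap is exactly where you locate ``the main obstacle,'' and your proposed resolution for $n\geq 2$ does not work as stated. First, the ``forgetful map $\Sigma_{1,n}\to\Sigma_{1,n-1}$'' does not induce a map on representations unless the filled-in puncture has trivial monodromy, so there is no well-defined induction on $n$ of the kind you describe. Second, ``constraints from the finite orbits of puncture-loop traces'' are vacuous: the pure mapping class group fixes each puncture, so each $\tr_{c_i}$ is $\Mod(\Sigma)$-invariant and its orbit is automatically a single point. Third, even granting that the restriction of $\rho$ to every one-holed torus subsurface is finite (via Dubrovin--Mazzocco), this does not bootstrap to finiteness of $\rho$: one must separately force the local monodromy around each puncture to be of finite order, and then one still needs a nontrivial theorem to pass from ``finite monodromy along every simple loop'' to ``finite image.'' The paper supplies these two missing ingredients as follows. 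For the puncture monodromy it shows $\tr\rho(c_i)$ is an algebraic integer (it satisfies a monic quadratic over the ring generated by essential-curve traces, via the two-holed torus trace relations of Example \ref{exfree}), proves Theorem C (unitarizability under bounded orbits --- itself requiring the genuinely geometric arguments of Lemma \ref{lem-s11} and Proposition \ref{prop-s12} about coplanar axes of elliptic elements in $\H^3$), applies unitarizability to every $\Gal(\overline{\Q}/\Q)$-conjugate of $\rho$, and invokes Kronecker's theorem to conclude the eigenvalues are roots of unity. For the final step it invokes the Patel--Shankar--Whang theorem (Theorem \ref{psw}): a semisimple $\SL_2(\C)$-representation of a positive-genus surface group with finite monodromy along every simple loop has finite image. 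Neither the Galois/Kronecker mechanism nor a substitute for Theorem \ref{psw} appears in your outline, so the argument is incomplete precisely at its hardest point.
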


Many of the technical issues in the proofs of Theorems A and B above arise while dealing with punctures. For $\Sigma$ closed (i.e. $n=0$) the proofs become considerably simplified thanks to existing results, in particular \cite{gkm,px,coopermanning}.
To illustrate this, we have included a short proof of Theorem A 
in Section \ref{sect:5} for closed surfaces of genus at least 2 using \cite{gkm,px}.

For surfaces of genus zero with more than three punctures, the description of the finite mapping 
class group orbits in the $\SL_2(\C)$-character variety is more complicated and in general unknown. Lisovyy-Tykhyy 
\cite{lt} completed the case of the four-punctured sphere as part of their classification of 
algebraic solutions to Painlev\'e VI differential equations, after earlier works including \cite{dm}, \cite{iwasaki}, \cite{hitchin}, \cite{boalch2}, \cite{cl}. In the case of the four-punctured sphere, it is known that there exist 
finite mapping class group orbits corresponding to representations with Zariski dense image in
$\SL_2(\C)$. In particular, the simplicity of results in Theorems A and B contrasts with the complexity of finite orbits in genus zero. We remark that the once-punctured torus case of Theorem B was essentially proved 
by Dubrovin-Mazzocco \cite{dm} (also in connection with Painlev\'e VI); the derivation from \cite{dm} is recorded in the Appendix. Our work gives a different proof in this case.

The relationship between algebraic solutions of Painlev\'e VI and finite mapping class group orbits in the character variety of the four-punctured sphere comes from R. Fuchs' interpretation of Painlev\'e VI \cite{fuchs} as the equation for isomonodromic deformations of Fuchsian systems on the Riemann sphere with 4 singular points. More generally, as shown in the work of Cousin \cite{cousin} and Cousin-Heu \cite{ch}, the finite mapping class group orbits in the character variety $X(\Sigma)$ correspond to algebraic isomonodromic deformations of algebraic $\SL_2$-bundles with logarithmic connection on algebraic curves. Combined with their result, the above results give a complete classification of algebraic solutions to isomonodromy equations over curves of positive genus, in the case of semisimple $\SL_2$-monodromy, in terms of monodromy data.

This paper pursues the theme of characterizing points on the character variety $X(\Sigma)$ 
with special dynamical properties. In this spirit, we also prove the result below. Given a 
complex algebraic variety $V$, we shall say that a subset of $V(\C)$ is \emph{bounded} if it 
has compact closure in $V(\C)$ with respect to the Euclidean topology.

\begin{theoremc}
Let $\Sigma$ be an oriented surface of genus $g\geq1$ with $n\geq0$ punctures. A semisimple
representation $\rho\,:\,\pi_1(\Sigma)\,\to\,\SL_2(\C)$ has bounded mapping class group
orbit in the character variety $X(\Sigma)$ if and only if
\begin{enumerate}
\item[(a)] $\rho$ is unitary up to conjugacy, or

\item[(b)] $g=1$ and $\rho$ is special dihedral up to conjugacy.
\end{enumerate} 
\end{theoremc}

Our results and methods answer some previously raised basic questions. Theorems A and B imply (Corollary 
\ref{faithful}) that a faithful representation of a positive-genus hyperbolic surface group 
into $\SL_2(\C)$ (or $\PSL_2(\C)$) cannot have finite mapping class group orbit in the 
character variety, answering a question raised by Lubotzky. Theorems A and B also verify the 
$G\,=\,\SL_2(\C)$ case of the following conjecture of Kisin \cite[Chapter 1]{sinz}: For $\pi$ the 
fundamental group of a closed surface or a free group of rank $r\geq3$, the points with finite 
orbits for the action of the outer automorphism group $\Out(\pi)$ on the character variety 
$\Hom(\pi,\,G)\git G$, for reductive algebraic groups $G$, correspond to representations $\pi\,\to\, 
G$ with virtually solvable image. However, there are counter-examples to this conjecture
for general $G$ (see, for instance \cite{KS,bkms}). Finally, we show that, given a closed hyperbolic 
surface $S$ of genus at least two, the energy of the harmonic map associated 
to a representation $\pi_1(S)\,\to\,\SL_2(\C)$ is bounded along the mapping class group orbit in the 
character variety if and only if the representation fixes a point of $\HHH^3$ (Theorem 
\ref{energy}). This answers a question due to Goldman.

For a surface $\Sigma$ of negative Euler characteristic with $n\,\geq\,1$ marked punctures, the 
subvarieties $X_k(\Sigma)$ of $X(\Sigma)$ obtained by fixing the traces 
$k\,=\,(k_1,\,\cdots,\,k_n)\,\in\,\C^n$ of local monodromy along the punctures form a family of log 
Calabi-Yau varieties \cite{whang} with rich Diophantine structure \cite{whang2}. Classifying 
the finite mapping class group orbits (and other invariant subvarieties) forms an important 
step in the study of strong approximation for these varieties, undertaken in the once-punctured 
torus case by Bourgain-Gamburd-Sarnak \cite{bgs}.

Finally, we note that Theorems A, B, C remain valid if the pure mapping class group is replaced by the full mapping class group (allowing for permutation of punctures). Similarly, since the fundamental groups of punctured surfaces are free, Theorems A and C imply the following. Given any finitely generated group $\pi$, let $X(\pi,\SL_2)$ be its $\SL_2$-character variety (see Section \ref{sect:2.2}).

\begin{ncorollary}
Let $\pi\,=\,F_m$ be a free group of rank $m\geq4$. A semisimple representation $\rho\,:\,\pi\,\to\,\SL_2(\C)$ has
finite (respectively, bounded) $\Out(\pi)$-orbit in $X(\pi,\SL_2)$ if and only if $\rho$ has finite (respectively, bounded) image.
\end{ncorollary}

\noindent{\bf Organization of the paper.}\,
In Section \ref{sect:2}, we record background on algebraic subgroups of $\SL_2(\C)$, character 
varieties, and mapping class groups. We also introduce the notion of loop configurations as a 
tool to keep track of subsurfaces of a given surface. In Section \ref{sect:3}, we study 
representations of surface groups whose images are contained in proper algebraic subgroups of 
$\SL_2(\C)$, and give a characterization of those with finite mapping class group orbits for 
surfaces of positive genus.

In Sections \ref{sect:4} and \ref{sect:5}, we prove our main results. One of the ingredients in the proof of Theorems A and B is a theorem of Patel--Shankar--Whang \cite[Theorem 1.2]{psw}, which states that a semisimple $\SL_2(\C)$-representation of a positive-genus surface group with finite monodromy along every simple loop must in fact be finite. (For the proof of Theorem C which runs in parallel, there is an analogous result.) Along essential curves, the requisite finiteness of monodromy can be largely obtained by studying Dehn twists, as described in Section \ref{sect:4}. Finiteness of local monodromy along the punctures is more involved, and is achieved in Section \ref{sect:5}. In the special case where $\Sigma$ is a closed surface, we also give a different (short) proof of Theorem A 
relying on some of the techniques developed by Gallo--Kapovich--Marden \cite{gkm} and Previte--Xia \cite{px}. 

In 
Section \ref{sect:6}, we provide applications of our work, answering earlier
mentioned questions due to Lubotzky and Goldman. Finally, in the Appendix we demonstrate a derivation of the once-punctured torus case of Theorem B from Dubrovin--Mazzocco \cite{dm}.

\section{Background} \label{sect:2}
\subsection{Subgroups of ${\rm SL}_2(\C)$} \label{sect:2.1}
Let $G$ be a proper algebraic subgroup of $\SL_2$ defined over $\C$. Up to conjugation, $G$ satisfies one of the
following \cite[Theorem 4.29]{vs}:
\begin{enumerate}
	\item[\textup{(1)}] $G$ is a subgroup of the \emph{standard Borel group}
	$$B\,=\,\left\{\begin{bmatrix}a & b\\ 0 & a^{-1}\end{bmatrix}\,\mid\, a\in\C^\times,b\in\C\right\}.$$
	\item[\textup{(2)}] $G$ is a subgroup of the \emph{infinite dihedral group}
	$$D_\infty\,=\,\left\{\begin{bmatrix}c & 0 \\ 0 & c^{-1}\end{bmatrix}\,\mid\,c\in\C^\times\right\}\cup \left\{\begin{bmatrix}0 & c \\ -c^{-1} & 0\end{bmatrix}\,\mid\,c\in\C^\times\right\}.$$
	\item[\textup{(3)}] $G$ is one of the finite groups $B A_4$, $B S_4$, and $B A_5$, which are the preimages in $\SL_2(\C)$ of the finite subgroups $A_4$ (tetrahedral group), $S_4$ (octahedral group), and $A_5$ (icosahedral group) of $\PGL_2(\C)$, respectively.
\end{enumerate}

We refer to the Appendix (after the statement of Theorem A.3) for an explicit description of the finite groups 
$B A_4$, $B S_4$, and $B A_5$.

\begin{definition}
A representation $\pi\,\to\,\SL_2(\C)$ of a group $\pi$ is:
	\begin{enumerate}
\item[\textup{(1)}] \emph{Zariski dense} if its image is not contained in a proper algebraic subgroup of $\SL_2$ defined over $\C$,

\item[\textup{(2)}] \emph{diagonal} if it factors through the inclusion $i\,:\,\C^\times\,\to\,\SL_2(\C)$ of the maximal 
torus consisting of diagonal matrices,

\item[\textup{(3)}] \emph{dihedral} if it factors through the inclusion $j\,:\,D_\infty\,\to\,\SL_2(\C)$, 

\item[\textup{(4)}] \emph{finite} if its image is finite,

\item[\textup{(5)}] \emph{unitarizable} if its image is conjugate to a subgroup of  $\SU(2)$,

\item[\textup{(6)}] \emph{reducible} if its image preserves a subspace of $\C^2$, 

\item[\textup{(7)}] \emph{irreducible} if it is not reducible,

\item[\textup{(8)}] \emph{elementary} if it is unitary or reducible or dihedral, 

\item[\textup{(9)}] \emph{non-elementary} if it is not elementary, and

\item[\textup{(10)}] \emph{semisimple} if $\C^2$ is a direct sum of $\pi$-modules under the representation (or, 
equivalently, the representation is irreducible or conjugate to a diagonal representation.)
\end{enumerate}
\end{definition}

\subsection{Character varieties} \label{sect:2.2}

Given a finitely presented group $\pi$, let us define the \emph{$\SL_2$-representation variety} $\Rep(\pi)$ as
the complex affine scheme determined by the functor
$$A\,\mapsto\,\Hom(\pi,\,\SL_2(A))$$
for every commutative $\C$-algebra $A$. Given a sequence of generators of $\pi$ with $m$ elements, we have a presentation of $\Rep(\pi)$ as a closed subscheme of $\SL_2^m$ defined by equations coming from relations among the generators. For each $a\in\pi$, let $\tr_a$ be the regular function on $\Rep(\pi)$ given by $\rho\mapsto\tr\rho(a)$. The \emph{character variety} of $\pi$ over $\C$ is the affine invariant theoretic quotient
$$X(\pi)\,=\,\Rep(\pi)\git\SL_2\,=\,\Spec\C[\Rep(\pi)]^{\SL_2(\C)}$$
under the  conjugation action of $\SL_2$. The complex points of $X(\pi)$ parametrize the isomorphism
classes of semisimple representations $\pi\to \SL_2(\C)$, or equivalently the Jordan equivalence classes
of representations $\pi\to\SL_2(\C)$ (see e.g.~\cite[Proposition 6.1]{simpson}). For each $a\,\in\,\pi$ the
regular function $\tr_{a}$ evidently descends to a regular function on $X(\pi)$. The
scheme $X(\pi)$ has a natural model over $\Z$. We refer to \cite{horowitz}, \cite{ps}, \cite{saito} for details.

\begin{example}
\label{exfree}
We refer to Goldman \cite{goldman2} for details of the examples below. Let $F_m$ denote the free group on $m\geq1$ generators $a_1,\cdots,a_m$.
\begin{enumerate}
	\item[(1)] We have $\tr_{a_1}\,:\,X(F_1)\,\simeq\,\A^1$.
	\item[(2)] We have $(\tr_{a_1},\,\tr_{a_2},\,\tr_{a_1a_2})\,:\,X(F_2)\,\simeq\,\A^3$ by Fricke
\cite[Section 2.2]{goldman2}.
	\item[(3)] The coordinate ring $\Z[X(F_3)]$ is the quotient of the polynomial ring
	$$\Z[\tr_{a_1},\tr_{a_2},\tr_{a_3},\tr_{a_1a_2},\tr_{a_2a_3},\tr_{a_1a_3},\tr_{a_1a_2a_3},\tr_{a_1a_3a_2}]$$
	by the ideal generated by two elements
$$
\tr_{a_1a_2a_3}+\tr_{a_1a_3a_2}-(\tr_{a_1a_2}\tr_{a_3}+\tr_{a_1a_3}\tr_{a_2}+\tr_{a_2a_3}\tr_{a_1}-\tr_{a_1}\tr_{a_2}\tr_{a_3})
$$
and
\begin{align*}
\tr_{a_1a_2a_3}\tr_{a_1a_3a_2}&-\{(\tr_{a_1}^2+\tr_{a_2}^2+\tr_{a_3}^2)+(\tr_{a_1a_2}^2+\tr_{a_2a_3}^2+\tr_{a_1a_3}^2)&\\
&\quad -(\tr_{a_1}\tr_{a_2}\tr_{a_1a_2}+\tr_{a_2}\tr_{a_3}\tr_{a_2a_3}+\tr_{a_1}\tr_{a_3}\tr_{a_1a_3})\\
&\quad +\tr_{a_1a_2}\tr_{a_2a_3}\tr_{a_1a_3}-4\}.
\end{align*}
In particular, $\tr_{a_1a_2a_3}$ and  $\tr_{a_1a_3a_2}$ are integral over the polynomial subring $\Z[\tr_{a_1},\tr_{a_2},\tr_{a_3},\tr_{a_1a_2},\tr_{a_2a_3},\tr_{a_1a_3}]$.
\end{enumerate}
\end{example}

We record the following, which is attributed by Goldman, \cite{goldman2}, to Vogt \cite{vogt}.

\begin{lemma}
\label{rellem}
Given a finitely generated group $\pi$ and $a_1,\,a_2,\,a_3,\,a_4\,\in\, \pi$, the following holds:
\begin{align*}
2{\tr_{a_1a_2a_3a_4}}&={\tr_{a_1}}{\tr_{a_2}}{\tr_{a_3}}{\tr_{a_4}}+{\tr_{a_1}}{\tr_{a_2a_3a_4}}+{\tr_{a_2}}{\tr_{a_3a_4a_1}}+{\tr_{a_3}}{\tr_{a_4a_1a_2}}\\
&\quad +{\tr_{a_4}}{\tr_{a_1a_2a_3}}+{\tr_{a_1a_2}}{\tr_{a_3a_4}}+{\tr_{a_4a_1}}{\tr_{a_2a_3}}-{\tr_{a_1a_3}}{\tr_{a_2a_4}}\\
&\quad -{\tr_{a_1}}{\tr_{a_2}}{\tr_{a_3a_4}}-{\tr_{a_3}}{\tr_{a_4}}{\tr_{a_1a_2}}-{\tr_{a_4}}{\tr_{a_1}}{\tr_{a_2a_3}}-{\tr_{a_2}}{\tr_{a_3}}{\tr_{a_4a_1}}.
\end{align*}
\end{lemma}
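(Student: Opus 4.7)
\emph{Proof plan.} This is a classical trace identity for $\SL_2(\C)$ attributed to Vogt; since both sides are universally defined regular functions on $\Rep(F_4)$, it suffices to verify the corresponding matrix identity for arbitrary $A_1, A_2, A_3, A_4 \in \SL_2(\C)$ (writing $A_i = \rho(a_i)$ for any given representation $\rho$). The key tools are
(i) the Cayley--Hamilton identity $A^{-1} = \tr(A)\,I - A$ for $A \in \SL_2(\C)$;
(ii) the derived trace identity $\tr(MN) + \tr(MN^{-1}) = \tr(M)\tr(N)$, obtained by multiplying (i) by $M$ and taking trace;
(iii) cyclic invariance $\tr(MN) = \tr(NM)$; and
(iv) the three-generator Fricke identity from Example~\ref{exfree}(3), which asserts
$$\tr_{a_ia_ja_k} + \tr_{a_ia_ka_j} = \tr_{a_ia_j}\tr_{a_k} + \tr_{a_ia_k}\tr_{a_j} + \tr_{a_ja_k}\tr_{a_i} - \tr_{a_i}\tr_{a_j}\tr_{a_k}.$$

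My plan is to derive two different expressions for the auxiliary length-four trace $\tr_{a_1a_3a_2a_4}$ and equate them. The first arises from (ii) applied with $X = A_4A_1$ and $Y = A_2A_3$, noting $\tr(XY) = \tr_{a_1a_2a_3a_4}$ by (iii); expanding $Y^{-1} = A_3^{-1}A_2^{-1}$ via (i) and taking traces yields
$$\tr_{a_1a_3a_2a_4} = \tr_{a_1a_4}\tr_{a_2a_3} - \tr_{a_1a_2a_3a_4} - \tr_{a_2}\tr_{a_3}\tr_{a_1a_4} + \tr_{a_3}\tr_{a_1a_2a_4} + \tr_{a_2}\tr_{a_1a_3a_4}.$$
The second expression arises from (ii) applied with $X = A_1A_3$ and $Y = A_2A_4$. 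Expanding $Y^{-1} = A_4^{-1}A_2^{-1}$ via (i) produces two further terms that need reduction: a reversed length-three trace $\tr_{a_1a_3a_2}$, which I reduce to $\tr_{a_1a_2a_3}$ via (iv); and a cyclically distinct length-four trace $\tr_{a_1a_3a_4a_2}$, which I reduce to $\tr_{a_1a_2a_3a_4}$ by a further application of (ii) with $X = A_1$ and $Y = A_2A_3A_4$. After these substitutions, terms of the form $\tr_{a_2}\tr_{a_4}\tr_{a_1a_3}$ and $\tr_{a_2}\tr_{a_1a_3a_4}$ cancel in pairs, and the remaining expression for $\tr_{a_1a_3a_2a_4}$ contains the summand $+\tr_{a_1a_2a_3a_4}$ (with opposite sign from the first expression).

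Equating the two expressions combines the $\tr_{a_1a_2a_3a_4}$-contributions into $2\tr_{a_1a_2a_3a_4}$, and the remaining terms assemble---after straightforward rearrangement and use of cyclicity (iii) to identify $\tr_{a_1a_3a_4} = \tr_{a_3a_4a_1}$, etc.---into precisely the right-hand side of the lemma. The main obstacle is the bookkeeping during the expansions, since numerous terms with varying signs must be tracked carefully; however, the computation is entirely mechanical, relying only on tools (i)--(iv).
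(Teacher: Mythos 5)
The paper records Lemma \ref{rellem} without proof, merely citing Vogt via Goldman, so there is no internal argument to compare against; your derivation from Cayley--Hamilton is the standard elementary route, and I have verified that your overall scheme does produce the identity: computing $\tr_{a_1a_3a_2a_4}$ once from the pair $(A_4A_1,A_2A_3)$ and once from $(A_1A_3,A_2A_4)$, reducing the reversed three-letter trace by the Fricke sum identity and the stray four-letter trace $\tr_{a_1a_3a_4a_2}$ by a further instance of (ii), and equating, yields exactly the twelve terms of the stated right-hand side, with precisely the two cancellations you describe (of $\tr_{a_2}\tr_{a_4}\tr_{a_1a_3}$ and of $\tr_{a_2}\tr_{a_1a_3a_4}$). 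Your first displayed formula is correct as written.

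The one step that fails as written is the reduction of $\tr_{a_1a_3a_4a_2}$. Applying (ii) with $X=A_1$ and $Y=A_2A_3A_4$ gives $\tr(A_1A_2A_3A_4)+\tr(A_1A_4^{-1}A_3^{-1}A_2^{-1})=\tr_{a_1}\tr_{a_2a_3a_4}$, and expanding the three inverses produces the four-letter trace $\tr(A_1A_4A_3A_2)$, whose cyclic class is $a_1a_4a_3a_2$, \emph{not} $a_1a_3a_4a_2$; so this instance of (ii) relates the wrong pair of cyclic words and never touches $\tr_{a_1a_3a_4a_2}$. (One cannot identify the two classes by reversing the word: word reversal does not preserve traces in $\SL_2(\C)$ --- already $\tr(ABC)=\tr(CBA)=\tr(ACB)$ fails generically.) The step is easily repaired: take instead $X=A_3A_4$ and $Y=A_2A_1$, so that $\tr(XY)=\tr_{a_1a_3a_4a_2}$ by cyclicity, and expanding $\tr(XY^{-1})=\tr(A_3A_4A_1^{-1}A_2^{-1})$ via (i) gives
$$\tr_{a_1a_3a_4a_2}=\tr_{a_1a_2}\tr_{a_3a_4}-\tr_{a_1}\tr_{a_2}\tr_{a_3a_4}+\tr_{a_1}\tr_{a_2a_3a_4}+\tr_{a_2}\tr_{a_1a_3a_4}-\tr_{a_1a_2a_3a_4},$$
which supplies both the $+\tr_{a_1a_2a_3a_4}$ with the sign you anticipate and the term $-\tr_{a_2}\tr_{a_1a_3a_4}$ needed for your second cancellation. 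With this substitution the plan is correct and the final assembly matches the lemma term for term.
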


The above computations imply the following fact.

\begin{fact}
\label{fact}
If $\pi$ is a group generated by $a_1,\,\cdots,\,a_m$, then $\Q[X(\pi)]$ is generated as a $\Q$-algebra by the collection $\{\tr_{a_{i_1}\cdots a_{i_k}}\,\mid\,1\leq i_1<\cdots<i_k\leq m\}_{1\leq k\leq 3}$.
\end{fact}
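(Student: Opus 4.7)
The plan is to first reduce the fact to the case of the free group $\pi = F_m$, and then use the trace identities in Example \ref{exfree} and Lemma \ref{rellem} to express every trace function as a polynomial in the generators listed in the statement.

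For the reduction, I would observe that the choice of generators gives a surjection $F_m \twoheadrightarrow \pi$, which induces a closed embedding $X(\pi) \hookrightarrow X(F_m)$; under this embedding, each proposed generator on $X(\pi)$ pulls back from the corresponding function on $X(F_m)$, so it suffices to prove the fact for $\pi = F_m$. By the classical Fricke--Vogt theorem (the content of \cite{horowitz, ps, saito}), the ring $\Q[X(F_m)]$ is generated as a $\Q$-algebra by the collection $\{\tr_w : w \in F_m\}$. Thus the task reduces to showing that each $\tr_w$ lies in the $\Q$-subalgebra $R$ generated by the collection in the statement.

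The next step is to eliminate inverses. The Cayley--Hamilton identity in $\SL_2$ reads $A + A^{-1} = \tr(A)\,I$, which yields $\tr(A^{-1}) = \tr(A)$ and
$$\tr(X A^{-1} Y) = \tr(A)\,\tr(XY) - \tr(X A Y).$$
Iterating this replaces $\tr_w$ by a polynomial in traces of positive words in the generators $a_1,\ldots,a_m$. Then I would induct on the length $\ell$ of a positive word $w' = b_1 b_2 \cdots b_\ell$ with each $b_j \in \{a_1,\ldots,a_m\}$. For $\ell \ge 4$, Lemma \ref{rellem} applied with $(a_1,a_2,a_3,a_4) = (b_1, b_2, b_3, b_4 \cdots b_\ell)$ writes $2\,\tr_{w'}$ as a polynomial in traces of cyclic rearrangements of proper subwords, each of length strictly less than $\ell$, so the induction closes. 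For $\ell = 3$ with distinct indices, cyclic invariance of the trace reduces the six orderings of any triple $i_1 < i_2 < i_3$ to either the sorted product $\tr_{a_{i_1} a_{i_2} a_{i_3}}$ (already in $R$) or its opposite $\tr_{a_{i_1} a_{i_3} a_{i_2}}$; the first defining relation in Example \ref{exfree}(3), applied to the free subgroup on $a_{i_1}, a_{i_2}, a_{i_3}$, rewrites the latter as a polynomial in length-$\le 2$ traces and the sorted triple. Repeated-letter cases in length $\le 3$ are handled by $A^2 = \tr(A)\,A - I$, giving, for instance, $\tr_{a_i^2} = \tr_{a_i}^2 - 2$ and $\tr_{a_i^2 a_j} = \tr_{a_i}\,\tr_{a_i a_j} - \tr_{a_j}$.

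The main obstacle is essentially bookkeeping: one must check that every invocation of Lemma \ref{rellem} does strictly reduce the word length on the right (immediate from the groupings that arise), and that the single identity in Example \ref{exfree}(3) uniformly handles all antisorted length-three traces (which follows by naturality under the inclusions of free subgroups on triples of generators). No identity beyond those already stated, together with Cayley--Hamilton in $\SL_2$, is required.
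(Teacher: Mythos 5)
Your proposal is correct and is essentially the argument the paper intends: the paper simply asserts that the Fact follows from Example \ref{exfree} and Lemma \ref{rellem}, and your write-up supplies exactly the expected details (reduction to $F_m$ via the surjection and the classical generation of $\Q[X(F_m)]$ by all trace functions, elimination of inverses by Cayley--Hamilton, length reduction via Lemma \ref{rellem} using the fact that division by $2$ is harmless over $\Q$, and the handling of $\tr_{a_{i_1}a_{i_3}a_{i_2}}$ via the first relation of Example \ref{exfree}(3), which is a universal trace identity and so applies to any triple of elements).
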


\begin{remark}
Using Fact \ref{fact}, we will show (Lemma \ref{simplefinitelem}) that an $\SL_2(\C)$-representation $\rho$ of a surface group has finite mapping class group orbit in the $\SL_2$-character variety if and only if the set of its traces along simple closed curves is finite.
\end{remark}

The construction of $X(\pi)$ is functorial with respect to the group
$\pi$. Given a homomorphism $f\,:\,\pi\,\to\,\pi'$ of finitely presented groups, the corresponding
morphism $f^*\,:\,X(\pi')\,\to\, X(\pi)$ sends a representation $\rho$ to the semisimplification of
$\rho\circ f$. In particular, the automorphism group $\Aut(\pi)$ of $\pi$ naturally acts
on $X(\pi)$. This action naturally factors through the outer automorphism group $\Out(\pi)$,
owing to the fact that $\tr_{aba^{-1}}\,=\,\tr_{b}$ for every $a,b\in\pi$.

Let $i\,:\,\C^\times\,\to\,\SL_2(\C)$ and $j\,:\,D_\infty\,\to\,\SL_2(\C)$ be the inclusion maps of the diagonal
maximal torus and the infinite dihedral group, respectively. They induce $\Out(\pi)$-equivariant maps
\begin{align*}
\tag{$*$}
i_*\,:\,\Hom(\pi,\C^\times)\,\to\, X(\pi)(\C)\quad\text{ and }\quad j_*\,:\,\Hom(\pi,D_\infty)/D_\infty\,\to\, X(\pi)(\C).
\end{align*}

\begin{lemma}
\label{ffib}
The following two hold.
\begin{enumerate}
\item The map $i_*\,:\,\Hom(\pi,\C^\times)\,\to\, X(\pi)(\C)$ in $(*)$ has finite fibers.
\item The map $j_*\,:\,\Hom(\pi,D_\infty)/D_\infty\,\to\, X(\pi)(\C)$ in $(*)$ has finite fibers.
\end{enumerate}
\end{lemma}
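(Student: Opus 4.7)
The plan is to exploit the classification of semisimple $\SL_2(\C)$-representations by their trace function, reducing each part to a counting problem controlled by the internal group-theoretic structure of the image subgroup ($\C^*$ or $D_\infty$).

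For part (1), I observe that $i \circ \chi$ is semisimple, being isomorphic to $\chi \oplus \chi^{-1}$; hence $i_* \chi_1 = i_* \chi_2$ is equivalent to the identity $\chi_1(a) + \chi_1(a)^{-1} = \chi_2(a) + \chi_2(a)^{-1}$ holding for every $a \in \pi$, which forces $\chi_2(a) \in \{\chi_1(a), \chi_1(a)^{-1}\}$ pointwise. I will then observe that $A = \{a \in \pi : \chi_1(a) = \chi_2(a)\}$ and $B = \{a \in \pi : \chi_1(a) = \chi_2(a)^{-1}\}$ are both subgroups of $\pi$ (using that $\chi_1$ and $\chi_2$ are homomorphisms) whose union is $\pi$. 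Since no group is the union of two proper subgroups, one of them equals $\pi$ and $\chi_2 \in \{\chi_1, \chi_1^{-1}\}$, bounding the fiber by two.

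For part (2), I would first record that $D_\infty = N_{\SL_2(\C)}(T)$ and that every dihedral representation is semisimple (if $\rho(\pi) \not\subset T$, the off-diagonal elements of $D_\infty$ swap the two coordinate axes, leaving no stable line). To each dihedral $\rho$ I attach the subgroup $\pi_\rho' := \rho^{-1}(T)$ of index $1$ or $2$, and note that $\pi_\rho'$ is invariant under $D_\infty$-conjugation because $D_\infty$ normalizes $T$. Using that $\pi$ is finitely generated, so $\Hom(\pi, \Z/2)$ is finite, it will suffice to bound, for each index-$\leq 2$ subgroup $\pi' \subset \pi$, the number of $D_\infty$-classes of $\rho$ with $\pi_\rho' = \pi'$ mapping to a given point of $X(\C)$. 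Part (1), applied to $\pi'$, shows that the restriction $\chi := \rho|_{\pi'} \colon \pi' \to \C^*$ is determined up to inversion by the trace of $\rho$ on $\pi'$, and $\chi \leftrightarrow \chi^{-1}$ is realized by conjugation by the Weyl element $w \in D_\infty \setminus T$. When $\pi' \neq \pi$, the remaining freedom is a single parameter $c \in \C^*$ in $\rho(g_0) = \left(\begin{smallmatrix} 0 & c \\ -c^{-1} & 0 \end{smallmatrix}\right)$ for a chosen $g_0 \in \pi \setminus \pi'$, with all other off-diagonal values fixed by multiplicativity. Conjugation by $\diag(s, s^{-1}) \in T$ rescales $c$ by $s^2$, and since $\C^*$ is $2$-divisible this parameter is killed by $T$-conjugation. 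Thus $(\pi', \chi)$ determines $\rho$ up to $D_\infty$-conjugation, yielding the bound $|\Hom(\pi, \Z/2)| < \infty$ on the fiber.

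The main obstacle I expect is bookkeeping: confirming that $\pi_\rho'$ is genuinely a $D_\infty$-conjugation invariant (so different index-$2$ subgroups give distinct classes), that the Weyl conjugation truly absorbs the $\chi \leftrightarrow \chi^{-1}$ ambiguity even when $\pi' = \pi$, and that the off-diagonal scaling $c \mapsto s^2 c$ really covers all of $\C^*$ through $T$-conjugation. Each of these is a small computation in $D_\infty$, but they must be coordinated to avoid over- or under-counting; part (1) plays the role of the underlying input that handles the diagonal block once $\pi'$ is fixed.
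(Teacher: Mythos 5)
Your proposal is correct, but it takes a genuinely different route from the paper's, particularly for part (2). For part (1) the paper argues directly with the conjugating matrix $g$: choosing $x$ with $\rho_1(x)\neq\pm1$, the requirement that $g\rho_1(x)g^{-1}$ be diagonal forces $g$ to be diagonal or anti-diagonal, whence $\rho_2\in\{\rho_1,\rho_1^{-1}\}$; your trace-based argument (pointwise equality of the multisets $\{\chi_i(a),\chi_i(a)^{-1}\}$, plus the fact that a group is never the union of two proper subgroups) reaches the same conclusion without the case split, and is if anything cleaner. For part (2) the paper again works with the conjugator: assuming some $\tr\rho_1(x)\notin\{0,\pm2\}$ (otherwise the image is finite and the case is disposed of separately), the matrix $\rho_1(x)$ is diagonal and non-central, and the same computation forces $g\in D_\infty$, so the fiber is a single $D_\infty$-class. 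Your argument instead stratifies the fiber by the index-$\leq 2$ subgroup $\pi'=\rho^{-1}(T)$, where $T$ is the diagonal torus, applies part (1) to $\rho|_{\pi'}$, and checks that the off-diagonal parameter $c$ is absorbed by $T$-conjugation (it rescales by $s^2$, and $\C^*$ is $2$-divisible) while $\chi\mapsto\chi^{-1}$ is realized by the Weyl element. This is longer but uniform: it needs no separate treatment of the degenerate case and yields the explicit bound $|\Hom(\pi,\Z/2\Z)|$ on every fiber. The verifications you flag as remaining obstacles all check out ($T$ is normal in $D_\infty$ so $\pi'$ is a conjugation invariant; $w$-conjugation inverts $\chi$ and sends $c$ to $c^{-1}$, which is then absorbed by $T$). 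One small imprecision: your parenthetical claim that a non-diagonal dihedral representation has no stable line fails when $\rho(\pi')$ is central, since then the image is abelian of order at most $4$ and preserves the eigenlines of an anti-diagonal element; the representation is still semisimple in that case, and in any event your argument only uses equality of characters on $\pi'$, so nothing breaks.
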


\begin{proof}
(1) Let $\rho_1,\,\rho_2\,:\,\pi\,\to\,\C^\times$ be characters such that $gi_*(\rho_1)g^{-1}\,=\,i_*(\rho_2)$ for
some $g\,\in\,\SL_2(\C)$. Without  loss of generality, we may assume
that $\rho_1(x)\,\neq\,\pm1$ for some $x\in\pi$, since otherwise the image of $\rho_1$
is finite and we are done. Writing $g\,=\,\left[\begin{smallmatrix}a & b\\ c &d\end{smallmatrix}\right]$ and $\rho_1(x)=\lambda$ with $\lambda\in\C^\times\setminus\{\pm1\}$, we have
$$\begin{bmatrix}a &b\\ c &d\end{bmatrix}\begin{bmatrix}\lambda & 0\\0 & \lambda^{-1}\end{bmatrix}\begin{bmatrix}d &-b\\ -c &a\end{bmatrix}=\begin{bmatrix}\lambda ad-\lambda^{-1}bc & (\lambda^{-1}-\lambda)ab\\(\lambda-\lambda^{-1})cd&\lambda^{-1}ad-\lambda bc\end{bmatrix}.$$
For the matrix on the right hand side to be diagonal, we must thus have $a\,=\,d\,=\,0$ or $b\,=\,c\,=\,0$
since $\lambda\,\neq\,\pm1$. If $a\,=\,d\,=\,0$, then $\rho_2\,=\,\rho_1^{-1}$. If $b\,=\,c\,=\,0$, then
$\rho_1\,=\,\rho_2$. This proves (1).

(2) Let $\rho_1,\,\rho_2\,:\,\pi\,\to\, D_\infty$ be representations such that 
$gj_*(\rho_1)g^{-1}\,=\,j_*(\rho_2)$ for some $g\,\in\,\SL_2(\C)$. Consider first the case where
$\tr\rho_1(x)\,\in\,\{0,\pm2\}$ for every $x\,\in\,\pi$. In this case, up to conjugation in $D_\infty$ the
image of $\rho_1$ must lie in the finite group
$$G\,=\,\left\langle\begin{bmatrix}
0 & i^k\\ -i^{-k} & 0
\end{bmatrix},\begin{bmatrix}
i^k & 0\\ 0 & {i^{-k}}
\end{bmatrix}:k\in\Z\right\rangle.$$
To see this, note first that clearly the diagonal elements in the image of $\rho_1$ belong to $G$. Next, suppose
we have (and fix) some $x\,\in\,\pi$ such that $\rho_1(x)$ is not diagonal. Up to conjugating $\rho_1$ by an
element of $D_\infty$ we may assume
$$\rho_1(x)\,=\,\begin{bmatrix}0 & 1\\ -1 & 0\end{bmatrix}.$$
It follows that any other $y\,\in\,\pi$ with non-diagonal $\rho_1(y)$ must be in $G$, since otherwise
$\tr\rho(xy)\,\notin\,\{0,\,\pm2\}$. Thus, we conclude that if $\tr\rho_1(x)\,\in\,\{0,\,\pm2\}$ for every
$x\,\in\,\pi$ (and the same holds for $\rho_2$ since $\rho_1$ and $\rho_2$ are assumed to be
$\SL_2(\C)$-conjugate), then $\rho_1$ and $\rho_2$ are each $D_\infty$-conjugate to a representation
with image in the finite group $G$. Since $\pi$ is finitely generated, there are
only finitely many homomorphisms $\pi\to G$, and from this our claim follows.

It remains to treat the case where $\tr\rho_1(x)\,\notin\,\{0,\,\pm2\}$ for some $x\,\in\,\pi$. Now note that
$\rho_1(x)$ must be diagonal. The equation 
$g\rho_1(x)g^{-1}\,=\,\rho_2(x)$ shows that, by the same computation as in the proof of part (1), we have $g\,\in\, 
D_\infty$. This proves (2).
\end{proof}

\subsection{Surfaces} \label{sect:2.3}

Here, we set our notational convention and terminology for various topological notions. Throughout this paper, 
a \emph{surface} is the complement of a finite collection of interior points in a compact oriented topological 
manifold of dimension 2, with or without boundary.

A \emph{simple closed curve} on a surface is an embedded copy of an unoriented circle. We shall often refer to 
simple closed curves simply as curves (since immersed curves will not be important in this paper). Given a 
surface $\Sigma$, a curve in $\Sigma$ is \emph{nondegenerate} if it does not bound a disk on $\Sigma$. A curve 
in the interior of $\Sigma$ is \emph{essential} if it is nondegenerate, does not bound a once-punctured disk on 
$\Sigma$, and is not isotopic to a boundary curve of $\Sigma$. Given a surface $\Sigma$ and an essential curve 
$a\,\subset\,\Sigma$, we denote by $\Sigma|a$ the surface obtained by cutting $\Sigma$ along $a$. An essential 
curve $a\,\subset\,\Sigma$ is \emph{separating} if the two boundary curves of $\Sigma|a$ corresponding to $a$ are 
in different connected components, and \emph{nonseparating} otherwise.

Let $\Sigma$ be a surface of genus $g$ with $n$ punctures or boundary curves. We shall denote by $\Mod(\Sigma)$ 
the (pure) mapping class group of $\Sigma$. By definition, it is the group of isotopy classes of orientation 
preserving homeomorphisms of $\Sigma$ fixing the punctures and boundary points individually. Given a simple 
closed curve $a\subset\Sigma$, we shall denote by $\tw_a\,\in\,\Mod(\Sigma)$ the associated (left) Dehn twist.

 We  define the character variety of $\Sigma$ (cf.\ Section \ref{sect:2.2}) to be
$$X(\Sigma)\,=\,X(\pi_1(\Sigma)).$$
The complex points of $X(\Sigma)$ can be seen as parametrizing the isomorphism classes of semisimple 
$\SL_2(\C)$-local systems on $\Sigma$. Note that a simple closed curve $a\,\subset\, \Sigma$ unambiguously defines 
a function $\tr_a$ on $X(\Sigma)$, coinciding with $\tr_\alpha$ for any loop $\alpha\,\in\,\pi_1(\Sigma)$ freely 
homotopic to a parametrization of $a$.

Given a continuous map $f\,:\,\Sigma'\,\to\,\Sigma$ of surfaces, we have an induced morphism of character varieties 
$f^*\,:\,X(\Sigma)\,\to\, X(\Sigma')$ depending only on the homotopy class of $f$. In particular, the mapping class 
group $\Mod(\Sigma)$ acts naturally on $X(\Sigma)$ by precomposition: given $\rho\,:\,\pi_1(\Sigma)\,\to\,\SL_2(\C)$ 
and $\gamma\,\in\, \Mod(\Sigma)$, the class of $\rho$ in $X(\C)$ is mapped to the class of $\rho\circ\gamma_*$ 
where $\gamma_*$ is the outer automorphism of $\pi_1(\Sigma)$ determined by $\gamma$ with any choice of 
basepoint. If $\Sigma'\subset\Sigma$ is a subsurface, the induced morphism on character varieties is 
$\Mod(\Sigma')$-equivariant for the induced morphism $\Mod(\Sigma')\,\to\,\Mod(\Sigma)$ of mapping class groups. In 
particular, if a semisimple $\SL_2(\C)$-representation of $\pi_1(\Sigma)$ has a finite $\Mod(\Sigma)$-orbit in 
$X(\Sigma)$, then its restriction to any subsurface $\Sigma'\subset\Sigma$ has a finite $\Mod(\Sigma')$-orbit 
in $X(\Sigma')$.

\subsection{Loop configurations} \label{sect:2.4}

Let $\Sigma$ be a surface of genus $g$ with $n$ punctures. We fix a base point in $\Sigma$. For convenience, we 
shall say that a sequence $\ell\,=\,(\ell_1,\,\cdots,\,\ell_m)$ of based loops on $\Sigma$ is \emph{clean} if each loop 
is simple and the loops pairwise intersect only at the base point.

\begin{example}
\label{exgen}
Recall the standard presentation of the fundamental group
$$\pi_1(\Sigma)\,=\,\langle a_1,d_1,\cdots,a_g,d_g,c_1,\cdots,c_n\,
\mid\,[a_1,d_1]\cdots[a_g,d_g]c_1\cdots c_n\rangle.$$
We can choose (the based loops representing) the generators so that the sequence of loops
$(a_1,d_1,\cdots,a_g,d_g,c_1,\cdots,c_n)$ is clean. For $i\,=\,1,\,\cdots,\,g$, let $b_i$ be the based simple
loop parametrizing the curve underlying $d_i$ with the opposite orientation. Note that
$(a_1,\,b_1,\,\cdots,\,a_g,\,b_g,\,c_1,\,\cdots,\,c_n)$ is a clean sequence with the property that any product of
distinct elements preserving the cyclic ordering on the sequence, such as $a_1b_g$ or $a_1a_2b_2b_g$ or
$b_gc_na_1$, can be represented by a simple loop in $\Sigma$. We shall refer to
$(a_1,\,b_1,\,\cdots,\,a_g,\,b_g,\,c_1,\,\cdots,\,c_n)$ as \emph{an optimal sequence of generators} of
$\pi_1(\Sigma)$. See Figure \ref{fig1} for an illustration of the optimal generators for $(g,\,n)\,=\,(2,\,1)$.
\end{example}

\begin{lemma}\label{simplefinitelem}
Let $\rho\,:\,\pi_1(\Sigma)\,\to\,\SL_2(\C)$ be a representation. Then $\rho$ has finite $\Mod(\Sigma)$-orbit
$X(\Sigma)$ if and only if
$$\Scal\,=\,\{\tr_a(\rho):\text{$a\subset\Sigma$ simple closed curve}\}$$
is a finite set.
\end{lemma}

\begin{proof}
Let $(a_1,\,\dots,\,a_{2g+n})$ be an optimal sequence of generators for $\pi_1(\Sigma)$. By Fact \ref{fact}, the
coordinate ring $\Q[X(\Sigma)]$ of $X(\Sigma)$ is generated over $\Q$ by the collection of trace functions
$$\{\tr_{a_{i_1}\cdots a_{i_k}}\,\mid\,1\leq i_1<\cdots<i_k\leq 2g+n\}_{1\leq k\leq 2g+n},$$
giving us a closed immersion
$$j\,=\,(\tr_{a_{i_1}\cdots a_{i_k}})\,:\,X(\Sigma)\,\hookrightarrow\,\A^{2^{2g+n}-1}.$$
By our observation about optimal sequences of generators in Example \ref{exgen}, every $a_{i_1}\cdots a_{i_k}$
appearing above is homotopic to a simple loop. Since $\tr_a(\gamma^*\rho)\,=\,\tr_{\gamma(a)}(\rho)$ for
every simple closed curve $a\,\subset\,\Sigma$ and $\gamma\,\in\,\Mod(\Sigma)$, we see that if $\Scal$ is
finite then the coordinates (under the immersion $j$) of the points in the $\Mod(\Sigma)$-orbit of $\rho$
belong to a finite set, and hence $\rho$ has finite $\Mod(\Sigma)$-orbit in $X(\Sigma)$. Conversely, if
$\rho$ has finite $\Mod(\Sigma)$-orbit in $X(\Sigma)$, $\{b_1,\dots,b_r\}$ is a collection of representatives of mapping class group equivalence classes of simple closed curves on $\Sigma$ (considered up to isotopy), and $\Scal'$ is the finite set of coordinates
of the images of the points in the orbit under the morphism $(\tr_{b_i}):X(\Sigma)\to\A^r$, then we have $\Scal\,=\,\Scal'$ which is therefore finite.
\end{proof}

\begin{figure}[ht]
    \centering
    \includegraphics{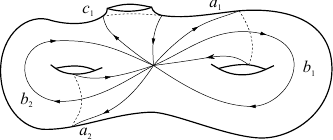}
    \caption{Optimal generators for $(g,n)=(2,1)$}
    \label{fig1}
\end{figure}

\begin{definition}
A \emph{loop configuration} is a planar graph consisting of a single vertex $v$ and a finite cyclically ordered sequence of directed rays, equipped with a bijection between the set of rays departing from $v$ and the set of rays arriving at $v$. We denote by $L_{g,n}$ the loop configuration whose sequence of rays is of the form
$$(a_1,b_1,\overline{a}_1,\overline{b}_1,\cdots,a_g,b_g,\overline{a}_g,\overline{b}_g,c_1,
\overline{c}_1,\cdots,c_n,\overline{c}_n),$$
where $a_i,b_i,c_i$ are the rays directed away from $v$, respectively corresponding to the rays
$\overline{a}_i,\overline{b}_i, \overline{c}_i$ directed towards $v$. An isomorphism of loop configurations is an isomorphism of planar graphs respecting the bijections between the departing and arriving rays. See Figure \ref{fig4} for an illustration of $L_{2,1}$.
\end{definition}

Given a clean sequence $\ell=(\ell_1,\cdots,\ell_m)$ of loops on $\Sigma$, we have an associated loop configuration $L(\ell)$, obtained by taking a sufficiently small open neighborhood of the base point and setting the departing and arriving ends of the loops $\ell_i$ to correspond to each other. For example, if $(a_1,b_1,\cdots,a_g,b_g,c_1,\cdots,c_n)$ is a sequence of optimal generators for $\pi_1(\Sigma)$, then
$$L(a_1,b_1,\cdots,a_g,b_g,c_1\cdots,c_n)\simeq L_{g,n}.$$

\begin{figure}[ht]
    \centering
    \includegraphics{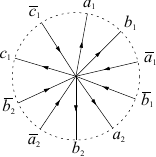}
    \caption{Loop configuration $L_{2,1}$}
    \label{fig4}
\end{figure}

\begin{definition}
Let $h$ and $m$ be nonnegative integers. A sequence of based loops $\ell\,=\,(\ell_1,\,\cdots,\,\ell_{2h+m})$ on $\Sigma$ is said to be \emph{in $(h,m+1)$-position} if it is homotopic term-wise to a clean sequence $\ell'=(\ell_1',\cdots,\ell_{2h+m}')$ such that $L(\ell')\simeq L_{h,m}$. We denote by $\Sigma(\ell)\subset \Sigma$ the (isotopy class of a) subsurface of genus $h$ with $m+1$ boundary curves obtained by taking a small closed tubular neighborhood of the union of the simple curves underlying $\ell_1',\cdots,\ell_{2h+m}'$ in $\Sigma$.
\end{definition}

\begin{remark}
It is worth emphasizing that the terminology above is intended to evoke the topological type of the surface $\Sigma(\ell)$, and not the isomorphism type of the loop configuration.
\end{remark}

\section{Non-Zariski-dense representations} \label{sect:3}
Let $\Sigma$ be a surface of genus $g\geq1$ with $n\geq0$ punctures. The purpose of this section is to characterize representations $\pi_1(\Sigma)$ with non-Zariski-dense image in $\SL_2(\C)$ that have finite mapping class group orbit in the character variety $X(\Sigma)$. An irreducible representation $\rho:\pi_1(\Sigma)\to \SL_2(\C)$ will be called \emph{special dihedral} if it factors through $D_\infty$ and there is a nonseparating essential curve $a$ in $\Sigma$ such that the restriction $\rho|(\Sigma\setminus a)$ is diagonal.
The main result of this section is the following.

\begin{proposition}\label{g1prop}
Let $\Sigma$ be a surface of genus $g\geq1$ with $n\geq0$ punctures. Let $\rho\,:\,\pi_1(\Sigma)\,\to\,\SL_2(\C)$
be a semisimple representation whose image is not Zariski-dense in $\SL_2(\C)$. Then $\rho$ has a finite
mapping class group orbit in $X(\Sigma)$ if and only if one of the following
holds:
\begin{enumerate}
\item[\textup{(1)}] $\rho$ is a finite representation.
\item[\textup{(2)}] $g\,=\,1$ and $\rho$ is special dihedral up to conjugation.
\end{enumerate}
\end{proposition}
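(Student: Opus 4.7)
My plan is to analyze the finite-orbit condition case by case according to the Zariski closure of $\rho(\pi_1\Sigma)$. By the classification in Section~\ref{sect:2.1}, since $\rho$ is semisimple and not Zariski-dense, after conjugation its image lies in the diagonal torus $\C^*\subset\SL_2(\C)$, in the infinite dihedral group $D_\infty$, or in a finite exceptional group. The sufficiency for finite $\rho$ is immediate; for special dihedral $\rho$ (in genus~$1$), I would verify sufficiency by a direct computation showing that Dehn twists around loops $\gamma$ with $\phi(\gamma)=0$, where $\phi:\pi_1(\Sigma)\to\Z/2$ is the quotient character $D_\infty\to\Z/2$, are absorbed by $\SL_2(\C)$-conjugation via a diagonal matrix $\diag(\sqrt{\mu},1/\sqrt{\mu})$, and that the remaining $\Mod(\Sigma_{1,n})$-action permutes a finite family of special dihedral representations indexed by the choice of the special curve.

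For necessity in the diagonal case, write $\rho=i\circ\chi$ for $\chi:\pi_1(\Sigma)\to\C^*$. By Lemma~\ref{ffib}(1), finite orbit of $\rho$ in $X(\Sigma)$ is equivalent to finite $\Mod(\Sigma)$-orbit of $\chi$ in $\Hom(H_1(\Sigma;\Z),\C^*)$ modulo inversion. For a nonseparating simple closed curve $\gamma$ with a dual simple closed curve $\delta$ intersecting $\gamma$ exactly once, iterated Dehn twists about $\gamma$ produce traces of the form $\chi(\delta)\chi(\gamma)^k+\chi(\delta)^{-1}\chi(\gamma)^{-k}$, which are finite-valued in $k\in\Z$ only when $\chi(\gamma)$ is a root of unity. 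Since primitive classes in the handle part of $H_1$ are realized by nonseparating simple closed curves, $\chi$ is torsion on this subgroup; the peripheral classes $[c_i]$ for $n\geq 1$ are then handled via Birman point-pushing elements, which act on $H_1$ by transformations of the form $v\mapsto v+k\cdot[c_i]$ and similarly force $\chi(c_i)$ to be a root of unity. Hence $\chi$ has finite image and $\rho$ is finite.

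For necessity in the irreducible dihedral case, let $K=\ker\phi$ and let $p:\tilde{\Sigma}\to\Sigma$ be the corresponding connected double cover. Then $\rho|K=i\circ\tilde{\chi}$ for a character $\tilde{\chi}:K=\pi_1(\tilde{\Sigma})\to\C^*$, and finite $\Mod(\Sigma)$-orbit of $\rho$ implies finite orbit of $\tilde{\chi}$ under the image of the lift $\Mod(\Sigma)\to\Mod(\tilde{\Sigma})$ (a finite-index subgroup of the stabilizer of the deck involution). For $g\geq 2$ the lifted subgroup still contains enough Dehn twists and point-pushings to apply the diagonal argument of the previous paragraph, forcing $\tilde{\chi}$ to be torsion and hence $\rho$ finite. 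For $g=1$ the lifted subgroup is more restrictive: the Dehn twist analysis only forces $\tilde{\chi}$ to be torsion on lifts of nonseparating simple closed curves in $\ker\phi$, and examining the possible topological types of $\phi\in H^1(\Sigma_{1,n};\Z/2)$ shows that only those Poincar\'e dual to a single nonseparating simple closed curve $a$ admit finite orbits; in that case the remaining freedom in $\tilde{\chi}$ corresponds exactly to the data of a special dihedral representation.

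The main obstacle will be the fine analysis in the $g=1$ dihedral case: precisely identifying which cohomology classes $\phi\in H^1(\Sigma;\Z/2)$ and accompanying diagonal characters $\tilde{\chi}$ produce finite $\Mod(\Sigma)$-orbits, and matching the answer to the topological description of special dihedral representations. A secondary technical difficulty is the handling of peripheral classes in the diagonal argument for $n\geq 1$, where Dehn twists around peripheral loops are trivial, so one must supplement Dehn twist analysis with point-pushing elements from the Birman exact sequence to access the puncture-loop coordinates of $\chi$.
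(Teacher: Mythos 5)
Your case division (diagonal, dihedral, finite exceptional) matches the paper's, and your diagonal case is essentially correct: where the paper handles the puncture classes by restricting to the genus-$g$ subsurface determined by the sequence $(a_1,b_1,\dots,a_g,b_gc_i)$, you use point-pushing elements acting on $H_1$ by $v\mapsto v+\langle v,[\gamma]\rangle[c_i]$; both mechanisms work and yours is a legitimate variant. Your treatment of the dihedral case is, however, genuinely different from the paper's, which never passes to the double cover: it pushes the class $\overline{\rho}\in\Hom(\pi_1(\Sigma),\Z/2\Z)$ into normal form using the transitivity of $\Sp(2g,\Z/2\Z)$ and then reduces to the diagonal lemma on explicit subsurfaces of $\Sigma$ itself. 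Your covering-space route can probably be made to work for necessity, but note that the relation $\tilde{\chi}\circ\iota_*=\tilde{\chi}^{-1}$ (for $\iota$ the deck involution) forces $\tilde{\chi}$ to be trivial on the $\iota$-invariant part of $H_1(\tilde{\Sigma})$, and a lifted twist $\tw_{\tilde{c}_1}\tw_{\tilde{c}_2}$ only constrains $\tilde{\chi}$ on the anti-invariant class $[\tilde{c}_1]-[\iota\tilde{c}_1]$; so ``the lifted subgroup contains enough Dehn twists'' is exactly the assertion that such classes span the anti-invariant homology, which must be verified and is precisely what degenerates in genus one. As written this is asserted rather than proved.

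The genuine gap is in the sufficiency direction for special dihedral representations in genus one. Your proposed mechanism --- that the $\Mod(\Sigma)$-action ``permutes a finite family of special dihedral representations indexed by the choice of the special curve'' --- is false as stated: there are infinitely many isotopy classes of nonseparating simple closed curves on $\Sigma_{1,n}$, and for each choice of special curve the diagonal character on its complement has continuous moduli, so the family of special dihedral representations is infinite. Finiteness of the orbit in $X(\Sigma)$ is exactly what has to be proved, and absorbing twists about curves $\gamma$ with $\phi(\gamma)=0$ by conjugation does not address the generators of $\Mod(\Sigma)$ that move the special curve (twists about curves meeting it). The paper's proof of Lemma \ref{dilem} supplies the missing idea: one shows directly that $\{\tr\rho(a)\mid a\subset\Sigma\text{ a simple closed curve}\}$ is finite, by induction on the geometric intersection number of $a$ with the special curve $a_0$, using the identity $\tr\rho(b)=\tr\rho(c)\tr\rho(c')-\tr\rho(b')$ together with the observation (Lemma \ref{intlem}) that for any pair $(c,c')$ in $(1,1)$-position at least one of $\rho(c),\rho(c')$ is anti-diagonal, hence $\tr\rho(c)\tr\rho(c')=0$. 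Some argument of this kind (or an equivalent) is needed; without it your plan does not establish the ``if'' direction of case (2).
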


It is evident that, if the image of $\rho\,:\,\pi_1(\Sigma)\,\to\,\SL_2(\C)$ belongs to one of the finite 
groups $BA_4$, $BS_4$, and $BA_5$, then its orbit in $X(\Sigma)$ is finite. From the discussion in Section 
\ref{sect:2.1} and Lemma \ref{ffib}, to prove Proposition \ref{g1prop} it remains to understand the finite 
mapping class group orbits on $\Hom(\pi_1(\Sigma),\,\C^\times)$ and $\Hom(\pi_1(\Sigma),\,D_\infty)/D_\infty$. 
Proposition \ref{g1prop} thus follows by combining Lemmas \ref{redlemm}, \ref{dilem2}, and \ref{dilem} below.\\

\noindent{\bf Case 1: Diagonal Representations.}

\begin{lemma}\label{redlemm}
Assume $\Sigma$ is a surface of genus $g\geq1$ with $n\geq0$ punctures. A representation $\rho\,:\,
\pi_1(\Sigma)\,\to\,\C^\times$ has finite (respectively, ~bounded) mapping class group orbit in $\Hom(\pi_1(\Sigma),
\,\C^\times)$ if and only if it has finite (respectively, bounded) image.
\end{lemma}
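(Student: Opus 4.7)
The plan is to reduce the entire question to homology, since any homomorphism $\rho : \pi_1(\Sigma) \to \C^*$ factors through the abelianization $H_1(\Sigma, \Z)$ and the $\Mod(\Sigma)$-action on $\Hom(H_1(\Sigma, \Z), \C^*)$ factors through the induced action on $H_1(\Sigma, \Z)$. The easy direction is then essentially automatic: if $\rho$ has finite image $F$, then every $\rho \circ \phi$ has image in $F$, so the orbit lies in the finite set $\Hom(H_1(\Sigma, \Z), F)$; if the image is merely bounded, it must lie in $S^1$ (a finitely generated bounded subgroup of $\C^*$ lies on the unit circle), and the orbit lies in the compact torus $\Hom(H_1(\Sigma, \Z), S^1)$.

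For the converse, the main tool is the action of Dehn twists on $H_1$. Given any nonseparating simple closed curve $a \subset \Sigma$, the hypothesis $g \geq 1$ guarantees a simple closed curve $b$ with algebraic intersection $\hat{i}(a, b) = 1$. The Dehn twist $\tw_a$ acts on $H_1$ by the transvection $x \mapsto x + \hat{i}(x, a)[a]$, so iterating yields
\begin{equation*}
(\tw_a^k \cdot \rho)([b]) \,=\, \rho([b] + k[a]) \,=\, \rho([b])\,\rho([a])^k.
\end{equation*}
Finiteness of the orbit forces $\rho([a])^k$ to take only finitely many values, hence $\rho([a])$ is a root of unity, while boundedness forces $|\rho([a])| = 1$.

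To conclude, I would appeal to the standard fact that, for $g \geq 1$, the classes of nonseparating simple closed curves generate $H_1(\Sigma, \Z)$. The symplectic generators $[a_i], [b_i]$ are plainly of this form; for each puncture class $[c_i]$ (when $n \geq 1$), one writes $[c_i] = [\gamma_i] - [a_1]$, where $\gamma_i$ is a nonseparating simple closed curve of homology class $[c_i] + [a_1]$, obtainable as a band sum of the puncture loop with $a_1$ (any simple closed curve representing this class must be nonseparating, since the class lies outside the subgroup of $H_1(\Sigma, \Z)$ generated by puncture loops). Hence $\rho$ takes values in the roots of unity (resp.\ in $S^1$) on a generating set of $H_1(\Sigma, \Z)$, and so on its entire image; since a finitely generated subgroup of the roots of unity is automatically finite, and any subgroup of $S^1$ is bounded, $\rho$ has finite (resp.\ bounded) image.

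The only mildly delicate point is the verification that nonseparating simple closed curves generate $H_1(\Sigma, \Z)$ in the presence of punctures, which requires the small band-sum construction above; everything else follows cleanly from the transvection formula for Dehn twists on homology.
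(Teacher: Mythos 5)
Your proof is correct, and it is essentially the paper's argument repackaged in homology. The engine is the same in both cases: twisting along a nonseparating curve $a$ and evaluating on a dual curve $b$ produces the geometric progression $\rho([b])\rho([a])^k$, whose finiteness (resp.\ boundedness) forces $\rho([a])$ to be a root of unity (resp.\ to have modulus $1$). Where the paper invokes its loop-configuration formalism --- restricting to the subsurface $\Sigma(L_i)$ determined by the $(g,1)$-position sequence $(a_1,b_1,\dots,a_g,b_gc_i)$ so as to apply the genus-$g$, one-boundary case to the loop $b_gc_i$ --- you instead pass to $H_1(\Sigma,\Z)$, use the transvection formula for $\tw_a$, and handle the puncture classes by band-summing $c_i$ with $a_1$ to produce a nonseparating simple closed curve in the class $[c_i]+[a_1]$. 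These are the same maneuver in different clothing: the paper's loop $b_gc_i$ is precisely such a band sum. What your version buys is independence from the loop-configuration bookkeeping set up in Section 2.4 and a cleaner reduction to the standard facts that Dehn twists act on $H_1$ by transvections and that separating curves have homology class in the span of the puncture classes; what the paper's version buys is uniformity with the rest of the article, where the $(h,m+1)$-position machinery is reused repeatedly (e.g.\ in Lemmas 3.3 and 3.5) in situations that do not factor through homology. Both directions of your argument, including the observation that a bounded subgroup of $\C^*$ lies on the unit circle and that a finitely generated torsion abelian group is finite, are sound.
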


\begin{proof}
Let $\rho\,:\,\pi_1(\Sigma)\,\to\,\C^\times$ be a representation with finite (respectively, bounded) mapping class
group orbit in
$\Hom(\pi_1(\Sigma),\,\C^\times)$. Let $(a_1,\,b_1,\,\cdots,\,a_g,\,b_g,\,c_1,\,\cdots,\,c_n)$ be the optimal
generators of $\pi_1(\Sigma)$. Assume first that $n\,=\,0$ or $1$. By considering the effect of Dehn twist
along the curve underlying $a_1$ on the curve underlying $b_1$, we conclude that $\rho(a_1)$ must be
torsion (respectively, have absolute value $1$). Applying the same argument to the other loops in the sequence
of optimal generators, we conclude that $\rho$ is finite (respectively, bounded) if $n\,\leq\,1$, as desired.

Thus, only the case $n\geq2$ remains. Since $(a_1,\,b_1,\,\cdots,\,a_g,\,b_g)$ is in $(g,\,1)$-position,
the above analysis shows that $\rho(a_i)$ and $\rho(b_i)$ are roots of unity (respectively, have absolute
value 1) for $i\,=\,1,\,\cdots,\,g$. Similarly, we see that the sequence
$$L_i\,=\,(a_1,\,b_1,\,\cdots,\,a_{g-1},\,b_{g-1},\,a_g,\,b_gc_i)$$
is in $(g,1)$-position for every $i\,=\,1,\,\cdots,\,n$. Since $\rho$ restricted to the surface $\Sigma(L_i)$ must
have finite (respectively, bounded) $\Mod(\Sigma(L_i))$-orbit,
it follows that $\rho(c_i)$ is a root of unity (respectively, has absolute value $1$) for $i\,=\,1,\,\cdots,\,n$
as well. This shows that $\rho$ has finite (respectively, bounded) image, as desired.
\end{proof}

\noindent{\bf Case 2: Dihedral Representations.}
We first prove Proposition \ref{g1prop} for surfaces of genus $g\geq2$ in Lemma \ref{dilem2} below.

\begin{lemma}
\label{dilem2}
Assume $\Sigma$ is a surface of genus $g\,\geq\,2$ with $n\,\geq\,0$ punctures. A representation $\rho\,:\,
\pi_1(\Sigma)\,\to\, D_\infty$ has finite mapping class group orbit in $\Hom(\pi_1(\Sigma),\,D_\infty)/D_\infty$
if and only if it has finite image.
\end{lemma}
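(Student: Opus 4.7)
The forward direction is immediate: if $\rho$ has finite image $G$, then $\rho\circ\phi$ lies in the finite set $\Hom(\pi_1(\Sigma),G)$ as $\phi$ varies over $\Mod(\Sigma)$. For the converse, suppose $\rho$ has finite $\Mod(\Sigma)$-orbit, and let $\epsilon\colon \pi_1(\Sigma)\to D_\infty/\C^*\simeq \Z/2$ be the sign character obtained by composing $\rho$ with the natural quotient. If $\epsilon$ is trivial then $\rho$ factors through $\C^*$; combining Lemma~\ref{ffib}(1) and (2) transfers the finite-orbit hypothesis to $\Hom(\pi_1(\Sigma),\C^*)$, and Lemma~\ref{redlemm} concludes. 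We henceforth assume $\epsilon\neq 0$.

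A direct calculation shows that every anti-diagonal element of $D_\infty$ squares to $-I$ and thus has order $4$. Consequently $\rho(\gamma)$ is automatically torsion whenever $\epsilon(\gamma)=1$. To conclude that $\rho(\pi_1(\Sigma))$ is finite, it therefore suffices to prove (i) that $\rho(\gamma)$ is torsion for every $\gamma\in\ker\epsilon$, and (ii) that the product $\rho(\gamma_1)\rho(\gamma_2)\in\C^*$ is torsion whenever $\epsilon(\gamma_1)=\epsilon(\gamma_2)=1$. Granted these, the diagonal part $\rho(\pi_1(\Sigma))\cap\C^*$ is a finitely generated torsion subgroup of $\C^*$ (using (i), (ii), and the product rule for two anti-diagonal elements being a diagonal element), hence finite; the complement is a single anti-diagonal coset and also finite.

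The plan for (i) and (ii) is to imitate the loop-configuration strategy of Lemma~\ref{redlemm}, leveraging $g\geq 2$. Using the symplectic action of $\Mod(\Sigma)$ on $H^1(\Sigma,\Z/2)$ (which is transitive on nonzero classes in the genus part), normalize $\epsilon$ so that $\epsilon(a_i)=\epsilon(b_i)=0$ for all $i\geq 2$. For each torsion claim needed, one exhibits a loop configuration $\ell$ built from admissible products of optimal generators, lying in some $(h,m+1)$-position with $h\geq 1$, and satisfying $\epsilon|_{\pi_1(\Sigma(\ell))}=0$; applying Lemma~\ref{redlemm} to $\Sigma(\ell)$ (on which $\rho$ is diagonal) then forces the relevant $\rho$-values to be torsion. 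For instance, torsion of $\rho(a_i),\rho(b_i)$ for $i\geq 2$ follows from the configuration $(a_2,b_2,\ldots,a_g,b_g)$ in $(g-1,1)$-position; torsion of $\rho(c_j)$ when $\epsilon(c_j)=0$ follows from $(a_2,b_2,\ldots,a_g,b_g c_j)$; and the torsion-coset condition for punctures $c_{j_1},c_{j_2}$ with $\epsilon(c_{j_i})=1$ follows from $(a_2,b_2,\ldots,a_g,b_g c_{j_1}c_{j_2})$, whose terminal loop has $\epsilon$-value $0$.

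The main obstacle is the case $\epsilon(a_1)=1$, in which $a_1$ itself cannot appear inside any $\epsilon$-trivial configuration. One must then access $\rho$-values involving $a_1$ through admissible products pairing $a_1$ with another $\epsilon=1$ generator, such as $(b_1,a_2,\ldots,a_g,b_g a_1 c_j)$ for a puncture $c_j$ with $\epsilon(c_j)=1$, so as to retain both the optimal-generator simple-loop property and the vanishing of $\epsilon$ on the subsurface $\pi_1$. The hypothesis $g\geq 2$ ensures that after such substitutions a spare handle always remains, keeping the subsurface genus at least $1$ and Lemma~\ref{redlemm} applicable.
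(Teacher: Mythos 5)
Your overall strategy is the same as the paper's: reduce via the exact sequence $0\to\C^*\to D_\infty\to\Z/2\Z\to 0$ and Lemma~\ref{ffib} to the case of a nontrivial sign character $\epsilon$, normalize $\epsilon$ using the symplectic action on optimal generators, and then apply Lemma~\ref{redlemm} to positive-genus subsurfaces on which $\rho$ is forced to be diagonal. Your observation that anti-diagonal elements square to $-\mathbf 1$ (so the only issue is torsion of the diagonal part, i.e.\ of $\rho$ on $\ker\epsilon$ --- note your condition (ii) is already a special case of (i)) is a clean way to organize the bookkeeping, and your configurations $(a_2,b_2,\ldots,a_g,b_g)$, $(a_2,\ldots,a_g,b_gc_j)$ and $(a_2,\ldots,a_g,b_gc_{j_1}c_{j_2})$ all work and match the paper's use of the subsurface $\Sigma(L')$.

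The one step that does not go through as written is your device for the case you correctly identify as the main obstacle, namely extracting the anti-diagonal parameter of a generator with $\epsilon=1$. The proposed sequence $(b_1,a_2,\ldots,a_g,b_ga_1c_j)$ has two problems. First, the word $b_ga_1c_j$ does not respect the cyclic ordering of the optimal generators (only $b_gc_ja_1$ does), so the simple-loop property of optimal sequences does not apply to it. Second, even after reordering, the terminal loop has its departing ray in the $b_g$-slot and its arriving ray in the $\overline{a}_1$-slot of $L_{g,n}$, so at the base point it interleaves with \emph{both} $a_g$ and $b_1$; a collection containing all three therefore cannot be homotoped to realize any $L_{h,m}$, and $\Sigma(\ell)$ is not defined for it. The repair is much simpler than your configuration suggests and is what the paper does: for $\epsilon(a_1)=\epsilon(c_j)=1$ the pair $(c_ja_1,b_1)$ is already in $(1,1)$-position, the restriction of $\rho$ to the one-holed torus $\Sigma(c_ja_1,b_1)$ is diagonal since $\rho(c_ja_1)$ is a product of two anti-diagonal matrices, and Lemma~\ref{redlemm} (which only needs genus $\geq1$) makes $\rho(c_ja_1)$ torsion, hence $\lambda_j$ a root of unity. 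With that substitution, and with an explicit word on why $\rho(b_1)$ itself is torsion (append $b_1$ as a boundary-type loop to $(a_2,\ldots,b_g)$, as in the paper's $L'$), your plan becomes a complete proof.
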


\begin{proof}
The ``\emph{if}'' direction is clear, and we now prove the converse. Let $\rho\,:\,\pi_1(\Sigma)\,\to\, D_\infty$
be a representation with finite mapping class group orbit in $\Hom(\pi_1(\Sigma),\,D_\infty)/D_\infty$. We have
a short exact sequence $1\to\C^\times\,\to\, D_\infty\,\to\, \Z/2\Z\to0$, where the
homomorphism $D_\infty\,\to\, \Z/2\Z$ is given by
$$\begin{bmatrix}c & 0\\ 0 & c^{-1}\end{bmatrix}\mapsto 0\quad\text{and}\quad \begin{bmatrix}0 & c\\ -c^{-1} & 0\end{bmatrix}\mapsto 1\quad\text{for all $c\in\C^\times$.}$$
This gives us a $\Mod(\Sigma)$-equivariant map
$$\Hom(\pi_1(\Sigma),\,D_\infty)/D_\infty\,\to\,\Hom(\pi_1(\Sigma),\,\Z/2\Z).$$ The fiber of this map above the
zero homomorphism consists of those points given by diagonal representations, to which Lemma \ref{redlemm}
applies, noting that the map $\Hom(\pi_1(\Sigma),\,\C^\times)\,\to\,\Hom(\pi_1(\Sigma),\,D_\infty)/D_\infty$ has finite
fibers. It suffices to consider the case where $\rho\,:\,\pi_1(\Sigma)\,\to\, D_\infty$ is not in the fiber over
the zero homomorphism. This assumption implies that there is a nonseparating loop $a_1$ on $\Sigma$ so
that $\rho(a_1)$ is not diagonal. Let us extend such $a_1$ to an optimal sequence of generators
$$(a_1,\,b_1,\,\cdots,\,a_g,\,b_g,\,c_1,\,\cdots,\,c_n)$$
for $\pi_1(\Sigma)$ (see Section 2.4 for the definition). Note that $L\,=\,(a_1,\,b_1,\,\cdots,\,a_g,\,b_g)$ is in
$(g,\,1)$-position, and we have a $\Mod(\Sigma(L))$-equivariant homomorphism
$$\Hom(\pi_1(\Sigma),\,\Z/2\Z)\,\to\,\Hom(\pi_1(\Sigma(L)),\,\Z/2\Z).$$
The action of $\Mod(\Sigma(L))$ on $\Hom(\pi_1(\Sigma(L)),\,\Z/2\Z)$ factors through the projection
$$\Mod(\Sigma(L))\twoheadrightarrow\Sp(2g,\Z/2\Z).$$ From the transitivity of $\Sp(2g,\Z/2\Z)$ on $(\Z/2\Z)^{2g}$ away from the origin, it follows that, up to $\Mod(\Sigma)$-action, we may assume that
$$\overline{\rho}(a_1)=1\quad\text{and}\quad\overline{\rho}(b_1)=\overline{\rho}(a_2)=
\cdots=\overline{\rho}(b_g)=0\, ,$$
where $\overline{\rho}$ is the image of $\rho$ in $\Hom(\pi_1(\Sigma),\,\Z/2\Z)$. Up to conjugation by an
element in $D_\infty$, we may moreover assume that
$$\rho(a_1)=\begin{bmatrix}0 & 1\\ -1 & 0\end{bmatrix}.$$
It suffices to show that the entries of the matrices $\rho(b_1)$, $\rho(a_2)$, $\rho(b_2)$, $\cdots$, $\rho(a_g)$, $\rho(b_g)$, $\rho(c_1)$, $\cdots$, $\rho(c_n)$ are roots of unity. Let $i_1<\cdots<i_q$ be precisely the indices in $\{1,\cdots,n\}$ such that $\overline{\rho}(c_{i_j})=0$. Since
$$L'=(a_2,b_2,\cdots,a_g,b_g,c_{i_1},\cdots,c_{i_q},b_1)$$
is in $(g-1,q+2)$-position, and the restriction of $\rho$ to $\Sigma(L')$ is diagonal, we see that $\rho(b_1)$, $\rho(a_2)$, $\rho(b_2)$, $\cdots$, $\rho(a_g)$, $\rho(b_g)$, $\cdots$, $\rho(c_{i_1})$, $\cdots$, $\rho(c_{i_q})$ are torsion by Lemma \ref{redlemm}. Let us now take $i\in\{1,\cdots,n\}\setminus\{i_1,\cdots,i_q\}$. The restriction of $\rho$ to the subsurface $\Sigma(c_ia_1,b_1)$ of genus $1$ with $1$ boundary curve is diagonal. Writing
$$\rho(c_i)=\begin{bmatrix}0 & \lambda_i\\-\lambda_i^{-1}&0\end{bmatrix}$$
with $\lambda_i\in\C^\times$, we have
$$\rho(c_ia_1)=\begin{bmatrix}0 & \lambda_i\\ -\lambda_i^{-1} & 0\end{bmatrix}\begin{bmatrix}0 & 1\\ -1 & 0\end{bmatrix}=\begin{bmatrix}-\lambda_i&0\\ 0 & -\lambda_i^{-1}\end{bmatrix}.$$
It follows from Lemma \ref{redlemm} that $\lambda_i$ is a root of unity. This completes the proof that the entries of $\rho(b_1)$, $\rho(a_2)$, $\rho(b_2)$, $\cdots$, $\rho(a_g)$, $\rho(b_g)$, $\rho(c_1)$, $\cdots$, $\rho(c_n)$ are roots of unity, and hence the image of $\rho$ is finite, as desired.
\end{proof}

Before proving the case $g\,=\,1$ of Proposition \ref{g1prop} in Lemma \ref{dilem} below, we record the following 
elementary lemma.

\begin{lemma}
\label{intlem}
Let $\Sigma$ be a surface of genus $1$ with $n\geq0$ punctures. Suppose that $\rho\,:\,\pi_1(\Sigma)
\,\to\, D_\infty$ is special dihedral. Given a pair of loops $(a,\,b)$ in $(1,\,1)$-position on $\Sigma$, at
least one of $\rho(a)$ and $\rho(b)$ is not diagonal.
\end{lemma}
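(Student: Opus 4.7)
The plan is to argue by contradiction: suppose for contradiction that both $\rho(a)$ and $\rho(b)$ are diagonal. The central device is the quotient homomorphism $\psi\colon D_\infty \to D_\infty/\C^* \cong \Z/2\Z$ sending diagonal elements to $0$ and anti-diagonal ones to $1$; set $\overline{\rho} = \psi \circ \rho\colon \pi_1(\Sigma) \to \Z/2\Z$. Since $\rho$ is irreducible (being special dihedral), $\overline{\rho}$ must be non-trivial---otherwise $\rho$ would have image in $\C^*$ and hence be reducible. Furthermore, if $c$ denotes the nonseparating essential curve witnessing the special dihedral property of $\rho$, the diagonality of $\rho|(\Sigma \setminus c)$ forces $\overline{\rho}$ to vanish on the image of $\pi_1(\Sigma \setminus c)$ in $\pi_1(\Sigma)$.

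The crucial step, and the main technical obstacle, is to identify $\overline{\rho}$ with the mod-$2$ intersection-with-$c$ character. Since $c$ is nonseparating, $\pi_1(\Sigma)$ is an HNN extension of $\pi_1(\Sigma\setminus c)$ whose stable letter $t$ can be taken to be any loop crossing $c$ once; hence the non-trivial characters $\pi_1(\Sigma) \to \Z/2\Z$ vanishing on the image of $\pi_1(\Sigma \setminus c)$ form a single class, represented by the character sending $t \mapsto 1$, which is precisely $\gamma \mapsto i(\gamma, c) \pmod 2$. Therefore $\overline{\rho}(\gamma) \equiv i(\gamma, c) \pmod 2$ for every loop $\gamma$, and the contradictory hypothesis yields $i(a, c) \equiv i(b, c) \equiv 0 \pmod 2$.

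To extract the contradiction I pass to the closed torus $\overline{\Sigma}$ obtained by filling in the punctures of $\Sigma$. Since $c$ is nonseparating in $\Sigma$, it remains nonseparating in $\overline{\Sigma}$ (a separating curve in $\overline{\Sigma}$ would still separate $\Sigma$), so $[c] \neq 0$ in $H_1(\overline{\Sigma}, \Z/2\Z) \cong (\Z/2\Z)^2$. Because $(a, b)$ is in $(1, 1)$-position, the subsurface $\Sigma(a, b)$ is a one-holed torus whose complement in $\overline{\Sigma}$ is a disk; hence $[a]$ and $[b]$ generate $H_1(\overline{\Sigma}, \Z/2\Z)$ as a symplectic basis with $[a] \cdot [b] \equiv 1 \pmod 2$. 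Writing $[c] = \alpha [a] + \beta [b]$ and pairing against $[a]$ and $[b]$ gives $\alpha \equiv i(b, c)$ and $\beta \equiv i(a, c) \pmod 2$, both zero, so $[c] = 0$ in $H_1(\overline{\Sigma}, \Z/2\Z)$, a contradiction.
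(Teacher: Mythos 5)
Your argument is correct, but it takes a genuinely different route from the paper's. The paper's proof is a three-line group-theoretic observation: complete $(a,b)$ to an optimal generating sequence $(a,b,c_1,\dots,c_n)$ of $\pi_1(\Sigma)$, note that the peripheral loops $c_i$ can be freely homotoped off the distinguished nonseparating curve and therefore map to diagonal matrices (diagonality being preserved under conjugation within $D_\infty$), and conclude that if $\rho(a)$ and $\rho(b)$ were also diagonal then $\rho$ itself would be diagonal, contradicting the irreducibility built into the definition of special dihedral. You instead identify the induced character $\overline{\rho}:\pi_1(\Sigma)\to\Z/2\Z$ with the mod-$2$ intersection character of the witness curve $c$ (via the HNN description of $\pi_1(\Sigma)$ relative to a nonseparating curve), pass to the closed torus, and contradict the nonvanishing of $[c]$ in $H_1(\overline{\Sigma};\Z/2\Z)$. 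So the two proofs locate the contradiction in different places: the paper contradicts irreducibility of $\rho$, while you contradict the nonseparatingness of $c$. Each step of yours checks out --- nontriviality of $\overline{\rho}$ from irreducibility, its vanishing on the image of $\pi_1(\Sigma\setminus c)$, the uniqueness of the nontrivial $\Z/2\Z$-character killing that image, the survival of nonseparatingness after filling the punctures, and the fact that $[a],[b]$ form a symplectic basis of $H_1(\overline{\Sigma};\Z/2\Z)$. The paper's route is shorter and needs no homology; yours is more conceptual, in that it exhibits $\overline{\rho}$ as Poincar\'e dual to a nonzero class on the torus, which cannot annihilate a symplectic basis, and it does not require completing $(a,b)$ to a generating set.
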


\begin{proof}
Let $\rho\,:\,\pi_1(\Sigma)\,\to\, D_\infty$ be special dihedral. We argue by contradiction. Assume that 
$(a_1,\,b_1)$ is a pair of loops in $(1,\,1)$-position on $\Sigma$ with both $\rho(a_1)$ and 
$\rho(b_1)$ diagonal. We can complete $(a_1,\,b_1)$ to a sequence of optimal generators 
$(a_1,\,b_1,\,c_1,\,\cdots,c_n)$ of $\pi_1(\Sigma)$. Since $\rho$ is special dihedral, the matrices 
$\rho(c_1),\,\cdots,\,\rho(c_n)$ must be diagonal (noting that the property of a matrix  being diagonal is not 
changed under conjugation by an element in $D_\infty$). This implies that $\rho$ is in fact 
diagonal, contradicting the hypothesis that $\rho$ is irreducible.
\end{proof}

\begin{lemma}
\label{dilem}
Let $\Sigma$ be a surface of genus $1$ with $n\,\geq\,0$ punctures. A representation $\rho\,:\,\pi_1(\Sigma)\,\to
\,D_\infty$ has finite mapping class group orbit in $\Hom(\pi_1(\Sigma),\,D_\infty)/D_\infty$ if and only
if it is finite or special dihedral.
\end{lemma}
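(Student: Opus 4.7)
My proof strategy mirrors that of Lemma \ref{dilem2}, with additional care required for the genus-one case, where Lemma \ref{redlemm} cannot be applied to subsurfaces formed purely from the torus generators $a_1,b_1$.

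For the ``if'' direction, a finite representation $\rho$ has finite orbit since $\Hom(\pi_1(\Sigma),G)$ is finite for any finite group $G$. For a special dihedral $\rho$ with distinguished nonseparating curve $a$ and diagonal character on $\ker\phi_a$, where $\phi_a:\pi_1(\Sigma)\to\Z/2$ is the mod-$2$ intersection with $a$, direct trace computations using $\rho(a)^2=-I$ show that the trace coordinates $\tr_\gamma(\rho)$ under the $\Mod(\Sigma)$-action cycle through only finitely many values of the form $0$ or $\pm(\lambda+\lambda^{-1})$, so the orbit is finite.

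For the ``only if'' direction, assume $\rho:\pi_1(\Sigma)\to D_\infty$ has finite mapping class group orbit. The short exact sequence $0\to\C^*\to D_\infty\to\Z/2\to 0$ gives an $\Mod(\Sigma)$-equivariant map $\Hom(\pi_1(\Sigma),D_\infty)/D_\infty \to \Hom(\pi_1(\Sigma),\Z/2)$. If $\rho$ lies in the fiber over zero (i.e.\ $\rho$ is diagonal), Lemma \ref{redlemm} applies directly with $g=1$. Otherwise, using that $\Mod(\Sigma)\to \Sp(2,\Z/2\Z)\cong S_3$ acts transitively on nonzero vectors of $(\Z/2)^2$, together with Dehn twists along loops of the form $a_1c_j$ (which transfer mod-$2$ contributions between torus and puncture parts), we normalize so that $\overline\rho(a_1)=1$, $\overline\rho(b_1)=0$, and, after $D_\infty$-conjugation, $\rho(a_1)=J:=\left[\begin{smallmatrix}0&1\\-1&0\end{smallmatrix}\right]$. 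Partition the punctures as $I=\{i:\overline\rho(c_i)=0\}$ and $J_0=\{j:\overline\rho(c_j)=1\}$; the surface group relation $[a_1,b_1]c_1\cdots c_n=1$ forces $|J_0|$ to be even.

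In \emph{Case A} ($J_0=\emptyset$), all $\rho(b_1)$ and $\rho(c_i)$ are diagonal, and the restriction of $\rho$ to $\ker\phi_{b_1}$ is diagonal; either all of $\rho(b_1),\rho(c_i)\in\{\pm I\}$, in which case $\rho$ has finite image contained in $\{\pm I,\pm J\}$, or some is nonscalar, in which case $\rho$ is irreducible (the eigenvectors of $J$ are never simultaneous eigenvectors of a nonscalar diagonal matrix) and hence special dihedral with $a=b_1$. In \emph{Case B} ($|J_0|\ge 2$), pick $j_1\in J_0$: then $\rho(a_1c_{j_1})=J\cdot\rho(c_{j_1})$ is diagonal, and with appropriate based simple-loop representatives the pair $(a_1c_{j_1},b_1)$ is in $(1,1)$-position in $\Sigma$ (the two curves cross transversely at the basepoint, realizing the handle configuration $L_{1,0}$). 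Lemma \ref{redlemm} applied to the resulting genus-one subsurface forces $\rho(a_1c_{j_1})$ and $\rho(b_1)$ to be torsion. Running the same argument for each $j\in J_0$ with $(a_1c_j,b_1)$, and for each $i\in I$ with $(a_1c_{j_1},b_1,c_i)$ in $(1,2)$-position, shows that all $\rho(a_1c_j)$ and $\rho(c_i)$ are torsion. Since every anti-diagonal element of $D_\infty$ has order $4$ and $\rho(c_j)=-J\cdot\rho(a_1c_j)$, the $\rho(c_j)$ for $j\in J_0$ are also torsion. All generators thus have torsion image, and a subgroup of $D_\infty$ generated by finitely many torsion elements is necessarily finite, so $\rho$ has finite image. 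The main obstacle will be the careful topological verification that $(a_1c_j,b_1)$ and $(a_1c_{j_1},b_1,c_i)$ admit based simple-loop representatives realizing the standard configurations $L_{1,0}$ and $L_{1,1}$; this requires judicious choice of tangent rays at the basepoint so that the cyclic orderings match.
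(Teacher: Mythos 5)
The main gap is in the ``if'' direction for special dihedral representations, which you dispose of with the phrase ``direct trace computations \ldots cycle through only finitely many values of the form $0$ or $\pm(\lambda+\lambda^{-1})$.'' This is an assertion of the conclusion, not an argument: the mapping class group moves the distinguished curve $a_0$ onto arbitrary nonseparating curves, and there is no a priori reason that the traces $\tr\rho(b)$ of arbitrary nonseparating curves $b$ lie in a finite set. This direction is in fact the bulk of the paper's proof of Lemma \ref{dilem}. The paper first reduces, via Fact \ref{fact}, to showing that $\{\tr\rho(a)\mid a\subset\Sigma\text{ essential}\}$ is finite; it handles separating curves using the fact that they are $\Mod(\Sigma)$-equivalent to curves freely homotopic to products $c_{i_1}\cdots c_{i_k}$ with all $\rho(c_i)$ diagonal; and it handles nonseparating curves by induction on the geometric intersection number with $a_0$, using Lemma \ref{intlem} (for any pair $(c,c')$ in $(1,1)$-position at least one of $\rho(c),\rho(c')$ is antidiagonal, hence traceless) together with the identity $\tr\rho(b)=\tr\rho(c)\tr\rho(c')-\tr\rho(b')$ to strictly decrease the intersection number. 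None of this appears in your proposal, and your claimed list of trace values is moreover wrong for separating curves, whose traces involve products of several of the eigenvalues $\lambda_i$.

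Two smaller points on the ``only if'' direction, which otherwise adapts Lemma \ref{dilem2} to genus one in the right spirit (the paper itself only says ``the same argument as in Lemma \ref{dilem2}'' here, and your Case A/Case B elaboration using the diagonal loops formed from $a_1$ and an antidiagonal $c_j$ is the correct device). First, your closing claim that ``a subgroup of $D_\infty$ generated by finitely many torsion elements is necessarily finite'' is false: the elements $\left[\begin{smallmatrix}0&1\\-1&0\end{smallmatrix}\right]$ and $\left[\begin{smallmatrix}0&\lambda\\-\lambda^{-1}&0\end{smallmatrix}\right]$ each have order $4$, but their product is $\left[\begin{smallmatrix}-\lambda^{-1}&0\\0&-\lambda\end{smallmatrix}\right]$, which has infinite order when $\lambda$ is not a root of unity. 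What saves you is that your argument establishes the stronger fact that the diagonal matrices $\rho(a_1c_j)$ are torsion, hence have root-of-unity entries, hence every generator has root-of-unity entries; you should conclude from that, as the paper does in Lemma \ref{dilem2}. Second, the order of the product matters for the loop configuration: with the paper's conventions it is $(c_ja_1,b_1)$, not $(a_1c_j,b_1)$, that is in $(1,1)$-position (the rays used by $a_1c_j$ and $b_1$ at the base point do not alternate in $L_{1,n}$), and the $(1,2)$-position claim you defer for $(a_1c_{j_1},b_1,c_i)$ is similarly sensitive to ordering and to whether $i<j_1$. These are fixable, but you have flagged them rather than done them.
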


\begin{proof}
The same argument as in Lemma \ref{dilem2} shows that if $\rho\,:\,\pi_1(\Sigma)\,\to\, D_\infty$ has
finite $\Mod(\Sigma)$-orbit in $\Hom(\pi_1(\Sigma),\,D_\infty)/D_\infty$, then $\rho$ is finite or special
dihedral. Indeed, by Lemma \ref{redlemm} we are done if $\rho$ is diagonal, so let us assume otherwise. This
implies there is a nonseparating loop $a_1$ such that $\rho(a_1)$ is not diagonal. We extend $a_1$ to an optimal
sequence
$$(a_1,\,b_1,\,c_1,\,\dots,\,c_n)$$
of generators for $\pi_1(\Sigma)$. Without loss of generality, we may assume $\rho(b_1)$ is diagonal (otherwise, replace $b_1$ by a simple loop homotopic to $b_1a_1$ and complete $(a_1,b_1a_1)$ to an optimal sequence of generators for $\pi_1(\Sigma)$). Up to conjugation in $D_\infty$, we can also assume
$$\rho(a_1)=\begin{bmatrix}0 & -1\\ 1 & 0\end{bmatrix}.$$
If $\rho(c_1),\dots,\rho(c_n)$ are all diagonal, then $\rho$ is special dihedral as desired. So assume otherwise; we need to show that $\rho$ is finite. Suppose $\rho(c_i)$ is not diagonal, and say
$$\rho(c_i)=\begin{bmatrix}0 & \lambda_i\\ -\lambda_i^{-1} & 0\end{bmatrix}
$$
for some $\lambda_i\,\in\,\C^\times$. Note that $(c_ia_1,\,b_1)$ is in $(1,\,1)$-position and the restriction of $\rho$ to
$\Sigma(c_ia_1,\,b_1)$ is diagonal, so $\lambda_i$ as well as the eigenvalues of $b_i$ are roots of unity. This holds true for all $c_i$ with non-diagonal $\rho(c_i)$. In what follows, fix one such $c_i$ with $\rho(c_i)$ non-diagonal. For any $c_j$ with $j<i$ such that $\rho(c_j)$ is diagonal, we see that $(c_ia_1,b_1c_j)$ is in $(1,1)$-position and the restriction of $\rho$ to $\Sigma(c_ia_1,b_1c_j)$ is diagonal, so the eigenvalues of $\rho(c_j)$ are roots of unity. Similarly, for any $c_j$ with $i<j$ such that $\rho(c_j)$ is diagonal, we see that $(c_ic_ja_1,b_1)$ is in $(1,1)$-position and the restriction of $\rho$ to $\Sigma(c_ic_ja_1,b_1)$ is diagonal, so the eigenvalues of $\rho(c_j)$ are roots of unity. Combining these observations, it follows that $\rho$ has finite image, as
desired. Thus, we have prove the ``only if'' direction of the lemma.

Let now $\rho\,:\,\pi_1(\Sigma)\,\to\, D_\infty$ be a special dihedral representation. We shall show that 
$\rho$ has finite mapping class group orbit in $\Hom(\pi_1(\Sigma),\,D_\infty)/D_\infty$, or equivalently in the 
character variety $X(\Sigma)$. By Lemma \ref{simplefinitelem}, it suffices to show that the set 
$$\{\tr\rho(a)\,\mid\,a\,\subset\,\Sigma\text{ essential curve}\}\subseteq\C$$ is finite.

Let $(a_1,\,b_1,\,c_1,\,\cdots,\,c_n)$ be optimal generators of $\pi_1(\Sigma)$. Since $\rho$ is special 
dihedral, $\rho(c_1)$, $\cdots$, $\rho(c_n)$ are diagonal matrices. Suppose $a$ is a separating essential curve 
in $\Sigma$. Since one of the connected components of $\Sigma|a$ is a surface of genus zero (with one boundary 
curve corresponding to $a$ and finitely many punctures), the monodromy of $\rho$ along $a$ is a product of 
conjugates of a subset of $\{\rho(c_1),\,\dots,\,\rho(c_n)\}$. But since $\rho$ is dihedral, the conjugate of each 
$\rho(c_i)$ is $\rho(c_i)$ or $\rho(c_i)^{-1}$. It follows that $\tr\rho(a)$ belongs to a finite set depending 
only on the boundary traces $\tr\rho(c_i)$. Thus, we conclude that $\{\tr\rho(a)\,\mid\,a\subset\Sigma\text{ 
separating curve}\}$ is finite.

\begin{figure}[ht]
    \centering
    \includegraphics{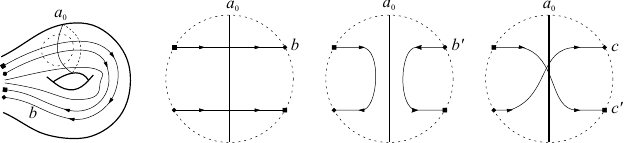}
    \caption{Building new loops out of old ones}
    \label{fig7}
\end{figure}

It remains to show that $\{\tr\rho(a)\,\mid\,a\,\subset\,\Sigma\text{ nonseparating curve}\}$ is finite. Let $a_0$ be a nonseparating curve in $\Sigma$ such that the restriction $\rho|(\Sigma\setminus a_0)$ is diagonal; such a curve exists since $\rho$ is special dihedral. Let $b$ be a nonseparating curve in $\Sigma$. Up to isotopy, we may assume that $b$ intersects $a$ only finitely many times. If $b$ does not intersect $a_0$, then $b\subset\Sigma\setminus a_0$ and hence $\tr\rho(b)$ can take only finitely many values as $\rho|(\Sigma\setminus a_0)$ is diagonal.

If $b$ intersects $a_0$ exactly once, then we must have $\tr\rho(b)\,=\,0$ by Lemma \ref{intlem}. Let us now
assume that $b$ intersects $a_0$ more than once. Let us choose a parametrization of $b$. Since $b$ is
nonseparating, there must be two neighboring points of intersection of $a_0$ and $b$ where the two segments
of $b$ have the same orientation, as in Figure \ref{fig7}. The operations as in Figure \ref{fig7} produce
for us a new simple closed curve $b'$ which is also nonseparating, as well as a pair $(c,\,c')$ of simple loops
in $(1,1)$-position on $\Sigma$. Note that $b'$ and $c,\,c'$ are constructed so that $b$ is homotopic to
$cc'$ (here the implicit base point is the intersection point of $c$ and $c'$), and $b'$ is homotopic
to $c'c^{-1}$. The fact that
$$\tr(AB)+\tr(AB^{-1})=\tr(A)\tr(B)$$
for any $A,\,B\,\in\,\SL_2(\C)$ shows, upon choosing $A\,=\,\rho(c)$ and $B\,=\,\rho(c')$, that we have
$$\tr\rho(b)=\tr\rho(c)\tr\rho(c')-\tr\rho(b').$$
By Lemma \ref{intlem}, we have $\tr\rho(c)\tr\rho(c')\,=\,0$, and therefore $\tr\rho(b)\,=\,-\tr\rho(b')$. Note
furthermore that $b'$ intersects $a_0$ in a smaller number of points than $b$ does. Applying induction
on the number of intersection points, we thus conclude that
$$\{\tr\rho(a)\,\mid\,a\subset\Sigma\text{ nonseparating curve}\}$$
is finite. This completes the proof that special dihedral representations have finite mapping class group 
orbits in $X$.
\end{proof}

\section{Analysis of Dehn twists}\label{sect:4}

Throughout this section, let $\Sigma$ be a surface of genus $g\geq1$ with $n\,\geq\,0$ punctures. For convenience 
of exposition, we shall denote by $(*)$ the following condition on a representation 
$\pi_1(\Sigma)\,\to\,\SL_2(\C)$:

\begin{quotation}
$(*)$ The representation is semisimple, and moreover $\Sigma$ has genus at least $2$ or the representation
is not special dihedral.
\end{quotation}

Given $\rho\,:\,\pi_1(\Sigma)\,\to\,\SL_2(\C)$ satisfying $(*)$, Theorems A and B (respectively, Theorem C) 
state that $\rho$ has finite (respectively, bounded) image in $\SL_2(\C)$ if its mapping class group orbit in 
the character variety $X(\Sigma)$ is finite (respectively, bounded). Let us recall the following.

\begin{theorem}[{\cite[Theorem 1.2]{psw}}]\label{psw}
Let $\Sigma$ be a surface of positive genus $g\,\geq\,1$ with $n\,\geq\,0$ punctures. If a semisimple representation 
$\pi_1(\Sigma)\,\to\,\SL_2(\C)$ has finite monodromy along all simple loops on $\Sigma$, then it has finite image.
\end{theorem}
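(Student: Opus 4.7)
The plan is to split according to the Zariski closure of $\rho(\pi_1(\Sigma))$ in $\SL_2(\C)$. By the classification in Section~\ref{sect:2.1}, if $\rho$ is not Zariski-dense then, up to conjugation, its image is contained in $B$, $D_\infty$, or one of the finite exceptional groups $BA_4, BS_4, BA_5$. The exceptional groups are themselves finite. In the Borel case, semisimplicity places the image in the diagonal torus $\C^*$, and since the optimal generators of Example~\ref{exgen} are representable by simple loops, the image is a finitely generated abelian group whose generators are all torsion by hypothesis, hence is finite. In the dihedral case, composing $\rho$ with the sign map $D_\infty \twoheadrightarrow \Z/2\Z$ corresponds geometrically to a connected double cover $\widetilde\Sigma \to \Sigma$ of positive genus on which the lifted representation takes values in $\C^*$. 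Simple loops on $\Sigma$ lying in the kernel of the sign map lift to simple loops on $\widetilde\Sigma$, while sign-flipping simple loops $a$ have $\rho(a)^2 \in \C^*$ with $\rho(a)$ of order $4$; together these supply a generating set for $\pi_1(\widetilde\Sigma)$ with torsion monodromy, reducing to the Borel case already handled.

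The main obstacle is the Zariski-dense case, which I would attack by restriction to one-holed-torus subsurfaces. Since $g \geq 1$, $\Sigma$ contains subsurfaces $\Sigma' \cong \Sigma_{1,1}$ with $\pi_1(\Sigma') \cong F_2 = \langle a, b \rangle$. The Fricke coordinates $(x, y, z) = (\tr_a, \tr_b, \tr_{ab})$ identify $X(F_2)$ with $\A^3$, and the mapping class group of $\Sigma'$ acts through Nielsen transformations such as $(x, y, z) \mapsto (x, y, xy - z)$, preserving the Markov-type cubic surface $x^2 + y^2 + z^2 - xyz = \tr\rho([a,b]) + 2$. Simple loops on $\Sigma'$ correspond to primitive elements of $F_2$, and their traces form precisely the $\Out(F_2)$-orbit of $\{x, y, z\}$ under this action. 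By hypothesis each such trace is of the form $2\cos(\pi p/q)$ with $p/q \in \Q$, hence lies in $[-2, 2]$. A Vieta-jumping analysis of the cubic-surface dynamics (in the spirit of Goldman, Cantat--Loray, and the Lisovyy--Tykhyy classification) then shows that such a uniformly bounded primitive-trace set forces the triple $(x, y, z)$ to belong to an explicit finite list of triples, each of which corresponds to a representation with finite image in $\SL_2(\C)$; this contradicts the assumption of Zariski-density.

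To conclude, if $\rho$ were Zariski-dense there would exist an element of $\rho(\pi_1(\Sigma))$ of infinite order, and a suitable mapping class move applied to its defining word would place (a conjugate of) this element inside the image of some one-holed-torus subsurface, contradicting the finiteness of each such restriction established in the previous step. Hence $\rho$ is not Zariski-dense and we are reduced to the cases already treated. The principal difficulty is the Zariski-dense step: because torsion traces are dense in $[-2, 2]$, no naive bounded-orbit argument suffices, and one must combine the explicit Markov-surface dynamics with the rigidity of torsion elements in $\SL_2(\C)$ to rule out the possibility of exotic bounded-trace orbits on the cubic surface.
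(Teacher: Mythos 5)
You should first be aware that the paper does not prove this statement at all: Theorem \ref{psw} is imported verbatim from Patel--Shankar--Whang \cite[Theorem 1.2]{psw}, so there is no internal proof to compare against, and your proposal amounts to an attempt to reprove the cited result. Your non-Zariski-dense cases are essentially sound (the Borel case is immediate, and the dihedral double-cover reduction can be made to work with some care about expressing the index-two diagonal subgroup in terms of products of simple loops). The genuine gap is in the Zariski-dense case, which is the entire content of the theorem. The pivotal claim --- that a triple $(x,y,z)\in\A^3$ all of whose primitive traces lie in $[-2,2]$ must belong to a finite list --- is false: the $\SU(2)$-characters of $F_2$ form a positive-dimensional real family on which every trace lies in $[-2,2]$. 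You acknowledge this in your closing sentence, but the proposed repair (``combine the Markov-surface dynamics with the rigidity of torsion elements'') is precisely the missing argument rather than a proof of it. No Vieta-jumping or bounded-orbit analysis on the cubic surface can distinguish $2\cos(\pi p/q)$ from $2\cos(\pi\theta)$ with $\theta$ irrational; what is actually needed is arithmetic. Since each simple-loop trace is a sum of two roots of unity, it is an algebraic integer all of whose Galois conjugates again lie in $[-2,2]$; since field automorphisms preserve finite order, every Galois conjugate of $\rho$ again has elliptic monodromy along simple loops and is therefore unitarizable (Proposition \ref{psw2}); total boundedness plus integrality of the character (via Fact \ref{fact} and optimal generators) plus Kronecker's theorem then force finiteness. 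Your proposal never invokes Galois conjugation, and without it the argument cannot close.

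There is a second gap in your concluding globalization step. Even granting that the restriction of $\rho$ to every one-holed torus subsurface has finite image, the deduction that $\rho$ itself has finite image does not follow as you write it. It is not true that every element of $\pi_1(\Sigma)$ can be conjugated into a one-holed-torus subgroup by a mapping class move: an element represented by a filling curve cannot be supported on any proper essential subsurface, and mapping classes preserve the filling property. Moreover, a subgroup of $\SL_2(\C)$ generated by finitely many finite subgroups (the images of the various one-holed-torus restrictions) can perfectly well be infinite, indeed dense in $\SU(2)$. The correct passage from subsurface finiteness to global finiteness again runs through the trace-coordinate and Galois-theoretic argument above; this is exactly how the paper's own proofs of Theorems A and B proceed in Sections \ref{sec:5.3}--\ref{sec:5.4}, where two-holed-torus restrictions are shown to be unitary with algebraic integer traces and Kronecker's theorem is applied to the boundary monodromy.
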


\begin{proposition}[{\cite[Lemma 2.2]{psw}}]\label{psw2} 
	Let $\Sigma$ be a surface of positive genus $g\,\geq\,1$ with $n\,\geq\,0$ punctures. If a semisimple
representation $\pi_1(\Sigma)\,\to\,\SL_2(\C)$ has elliptic or central monodromy along all simple loops on
$\Sigma$, then it is unitarizable.
\end{proposition}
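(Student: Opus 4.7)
The plan is to reduce to the irreducible case, establish that the character $\chi_\rho$ is real-valued using the optimal generator construction, and then deduce unitarizability from a $\rho$-invariant Hermitian form whose signature is forced to be definite by the ellipticity hypothesis.

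For reducible semisimple $\rho$, write $\rho \cong \chi \oplus \chi^{-1}$ for a character $\chi : \pi_1(\Sigma) \to \C^*$. On each optimal generator $\gamma$ (Example \ref{exgen}, which is a simple loop), the relation $\chi(\gamma)+\chi(\gamma)^{-1}\in[-2,2]$ combined with $\chi(\gamma)\cdot\chi(\gamma)^{-1}=1$ forces $|\chi(\gamma)|=1$. Since optimal generators generate $\pi_1(\Sigma)$, the character $\chi$ takes unit values globally and $\rho$ is unitarizable.

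For irreducible $\rho$, I would invoke Fact \ref{fact} together with the optimal generator property: any product of at most three optimal generators in increasing index order preserves the cyclic order of the optimal sequence and is therefore freely homotopic to a simple loop on $\Sigma$. Hence every generator of $\C[X(\Sigma)]$ takes a real value in $[-2,2]$ when evaluated at $\rho$, so $\chi_\rho$ is real-valued on all of $\pi_1(\Sigma)$. Consequently $\chi_\rho = \chi_{\overline\rho}$, and irreducibility produces an intertwiner $J\in\GL_2(\C)$ with $J\rho(\gamma)J^{-1}=\overline{\rho(\gamma)}$ for all $\gamma$. Applying this twice, Schur's lemma forces $\overline J J$ to be scalar; rescaling, $J$ may be taken Hermitian, so $\rho$ preserves the nondegenerate Hermitian form $J$. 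If $J$ is definite, $\rho$ is $\SU(2)$-valued up to conjugation and we are done.

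It remains to exclude indefinite signature, where $\rho$ would factor up to conjugation through $\SU(1,1)\cong\SL_2(\R)$ acting on $\mathbb{H}^2$. Every simple loop of $\rho$ would then fix a point of $\mathbb{H}^2$, and since irreducibility forbids a common fixed point in $\mathbb{H}^2\cup\partial\mathbb{H}^2$, one can pick two optimal generators $x,y$ whose images $X,Y$ are elliptic with distinct fixed points in $\mathbb{H}^2$. Using the standard formula relating the trace of a product of two elliptic elements of $\SL_2(\R)$ to the rotation angles and the hyperbolic distance between the fixed points, and varying over the family of cyclic-order products of optimal generators (all of which are simple loops by Example \ref{exgen} and hence must have elliptic images under the hypothesis), one derives a contradiction: some such product must have a hyperbolic or parabolic image. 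This last step is the main obstacle: one must convert the combinatorial abundance of simple loops in positive genus into an explicit non-elliptic element in the image, and it is precisely here that the $g\geq 1$ hypothesis enters, since in genus zero the supply of simple loops is too restrictive to obstruct all indefinite $\SL_2(\R)$-valued possibilities.
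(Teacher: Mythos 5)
The paper does not actually prove this proposition --- it is imported wholesale from Patel--Shankar--Whang \cite[Lemma 2.2]{psw} --- so there is no internal proof to measure you against. On its own terms, your outline is sound through its first three stages: the reducible case is correct, the real-character argument via Fact \ref{fact} and the cyclic-order property of optimal generators is exactly the device the paper uses in the proofs of Theorems A, B and C, and the dichotomy ``irreducible with real character $\Rightarrow$ conjugate into $\SU(2)$ or $\SL_2(\R)$'' is standard (the paper cites \cite[Proposition III.1.1]{ms} for it).

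The genuine gap is the one you flag yourself: the exclusion of the indefinite ($\SL_2(\R)$) case is asserted, not proved, and the route you propose cannot close it. The cyclic-order products of distinct optimal generators form a \emph{finite} family of positive words, and finitely many constraints of the form $\tr\in[-2,2]$ do not obstruct irreducible $\SL_2(\R)$-representations: for suitable rotation angles and small separation of fixed points one can make $A$, $B$, $AB$ and $AB^{-1}$ all elliptic while $\langle A,B\rangle$ is irreducible in $\SL_2(\R)$. The loop that rescues the argument is one your family omits, namely the commutator. If $A,B\in\SL_2(\R)$ are non-central elliptics with rotation half-angles $\alpha,\beta$ and fixed points at distance $d>0$ (which irreducibility of the restriction to a one-holed torus $\Sigma(a,b)$ forces, after invoking Proposition \ref{red} to locate such a subsurface), then the Fricke identity $\tr[A,B]=x^2+y^2+z^2-xyz-2$ together with $\tr(AB)=2(\cos\alpha\cos\beta\mp\sin\alpha\sin\beta\cosh d)$ gives
\begin{equation*}
\tr[A,B] \;=\; 2+4\sin^2\!\alpha\,\sin^2\!\beta\,\bigl(\cosh^2 d-1\bigr)\;>\;2,
\end{equation*}
so $[A,B]$ is hyperbolic. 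But $[a,b]$ is freely homotopic to the boundary of the embedded one-holed torus $\Sigma(a,b)$, hence is a simple loop on $\Sigma$, and the hypothesis --- which covers \emph{all} simple loops, separating and boundary-parallel ones included --- forces $\tr\rho([a,b])\in[-2,2]$, a contradiction. This puts $\rho|\Sigma(a,b)$ inside $\SU(2)$, and the global $\SL_2(\R)$ alternative then dies because a bounded irreducible subgroup of $\SL_2(\R)$ would be conjugate into $\SO(2)$, hence abelian. Note that the full strength of the hypothesis is what makes this shortcut available: the harder statements in the paper (Lemma \ref{lem-s11}, Proposition \ref{prop-s12}) assume ellipticity only along nonseparating curves, the commutator is then not covered, and that is precisely why they must appeal to \cite{twz} or to the axis arguments of \cite{gkm}.
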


Consequently, to prove Theorems A and B (respectively, Theorem C) it suffices to prove that a representation
$\rho\,:\,\pi_1(\Sigma)\,\to\,\SL_2(\C)$ with finite (respectively, bounded) mapping class group orbit in the
character variety has finite (respectively, elliptic or central) monodromy along all nondegenerate simple
closed curves on $\Sigma$. Let us divide up the curves into four types:
\begin{itemize}
\item {Type I.} Nonseparating essential curves.

\item {Type II.} Separating essential curves $a\,\subset\,\Sigma$ with each component of
$\Sigma|a$ (defined in section 2.3) having genus at least one.

\item {Type III.} Separating essential curves $a\subset\Sigma$ with one component of $\Sigma|a$ having genus zero.

\item {Type IV.} Boundary curves.
\end{itemize}
The purpose of this section is to show that, if $\rho\,:\,\pi_1(\Sigma)\,\to\,\SL_2(\C)$ is a representation satisfying 
the hypotheses $(*)$ mentioned above with finite (respectively, bounded) mapping class group orbit in 
$X(\Sigma)$, then $\rho$ must have finite (respectively, elliptic or central) monodromy along all curves of 
type I and II. We shall also show that $\rho$ has finite (respectively, elliptic or central) monodromy along 
all curves of type III provided the same holds for all curves of type IV. In particular, this is enough for us 
to prove the following special cases of Theorems A, B and C.

\begin{proposition}
\label{simple-finite}
Let $\Sigma$ be a surface of genus $g\,\geq\,1$ with $n\,\geq\,1$ punctures.
Suppose that $\rho\,:\,\pi_1(\Sigma)\,\to\,\SL_2(\C)$ is a semisimple representation with finite (respectively,
elliptic or central) local monodromy around the punctures, and finite (respectively, bounded) mapping
class group orbit in the character variety $X(\Sigma)$. Then one of the following holds:
\begin{enumerate}
\item $\rho$ is finite (respectively, unitarizable);

\item $g\,=\,1$ and $\rho$ is special dihedral up to conjugacy.
\end{enumerate}
\end{proposition}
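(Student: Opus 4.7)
The plan is to verify the hypothesis of the Patel--Shankar--Whang dichotomy (Theorem \ref{psw} in the finite-orbit case, Proposition \ref{psw2} in the bounded case) and apply it. That is, we shall show that under the hypotheses of the proposition the monodromy $\rho(\gamma)$ is finite (respectively, elliptic or central) for every simple closed curve $\gamma\subset\Sigma$, so long as we are not in the residual special dihedral case.

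First, I would dispose of the case in which $\rho$ fails hypothesis $(*)$: by definition of $(*)$ this forces $g=1$ and $\rho$ to be special dihedral up to conjugacy, which is precisely conclusion~(2), and there is nothing left to prove. So we may assume that $(*)$ holds and aim for conclusion~(1).

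Next, I would partition every simple closed curve on $\Sigma$ into the four types I--IV introduced at the start of this section and check the PSW finiteness (respectively, ellipticity or centrality) of monodromy type by type. Type IV (boundary/puncture curves) is immediate from the hypothesis on local monodromy. Types I (nonseparating essential) and II (separating essential with both sides of positive genus) will be handled by the Dehn twist machinery developed later in the section; note that hypothesis $(*)$ is used here precisely to exclude the special dihedral pathology in genus one. Type III (separating essential curves that bound a genus-zero subsurface) is also handled by that machinery, but only conditionally on the Type IV statement -- which is in turn supplied by our hypothesis. Since every essential simple closed curve lies in one of Types~I--IV, we conclude that $\rho$ has finite (respectively, elliptic or central) monodromy along every simple loop, and then Theorem~\ref{psw} (respectively, Proposition~\ref{psw2}) yields conclusion~(1).

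The genuine technical content lies in the Dehn twist arguments supporting Types~I, II, and~III, not in the present proposition, which is largely an assembly of those results together with the PSW theorem and the definition of $(*)$. The subtlety I expect to be most delicate is Type~III: when $a$ is a separating essential curve with $\Sigma|a$ having a genus-zero piece, restricting $\rho$ to that piece loses the positive-genus hypothesis that powers Lemma~\ref{redlemm} and its dihedral analogue, so one must instead leverage finiteness at the punctures (Type~IV) to control the monodromy around $a$. This is exactly why the Type~III conclusion is conditional on Type~IV, and why the local monodromy hypothesis appears in the proposition itself rather than being deducible from the mapping class orbit alone.
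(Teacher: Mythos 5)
Your proposal is correct and follows the paper's own route exactly: dispose of the failure of $(*)$ as conclusion (2), then verify the hypothesis of Theorem \ref{psw} (respectively, Proposition \ref{psw2}) curve type by type, with Types I and II from Lemmas \ref{type1} and \ref{type2}, Type III from Lemma \ref{type3} conditionally on Type IV, and Type IV supplied directly by the local monodromy hypothesis. Your closing remark about why the puncture hypothesis must appear in the statement (because the genus-zero side of a Type III curve escapes Lemma \ref{redlemm}) matches the paper's own reason for deferring Type IV to Section \ref{sect:5}.
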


The rest of this Section proves Proposition \ref{simple-finite} by dealing with curves of Types I, II and III.
We shall complete the proof of our main results in Section \ref{sect:5} by treating the curves of type IV.
\\

\noindent {\bf Type I.} Nonseparating essential curves.

\begin{lemma}
\label{type1}
Let $\rho\,:\,\pi_1(\Sigma)\,\to\,\SL_2(\C)$ be a representation satisfying $(*)$, and let
$a\,\subset\,\Sigma$ be a curve of type I. If $\rho$ has finite (respectively, bounded) orbit under
$Mod(\Sigma)$ in $X(\Sigma)$, then $\rho$ has finite (respectively, elliptic or central) monodromy along $a$.
\end{lemma}

\begin{proof}
Let us choose a base point $x_0$ on $\Sigma$ lying on $a$, and let $\alpha$ be a simple based loop 
parametrizing $a$. Suppose $\beta$ is another simple loop such that the pair $(\alpha,\,\beta)$ is in 
$(1,\,1)$-position, and let $b$ denote the underlying curve. For each integer $k\,\in\,\Z$, the loop $\alpha^k\beta$ 
is homotopic to a simple loop whose underlying curve is isotopic to $\tw_a^k(b)$. In particular, by our 
hypothesis the set $\{\tr\rho(\alpha^k\beta)\,\mid\,k\in\Z\}$ is a finite (respectively, bounded) subset of 
$\C$. Up to global conjugation of $\rho$ by an element of $\SL_2(\C)$, we may consider two cases.

(a) Suppose first that
	$$\rho(\alpha)=\begin{bmatrix}\lambda & 0 \\ 0 &\lambda^{-1}\end{bmatrix},\quad \lambda\in\C^\times.$$
	Let $\beta$ be a simple loop on $\Sigma$ such that $(\alpha,\beta)$ is in $(1,1)$-position, and write
$$\rho(\beta)=\begin{bmatrix} b_1 & b_2\\ b_3 & b_4\end{bmatrix}.$$
	For each $k\in\Z$, we have
	$$\tr\rho(\alpha^k\beta)=\tr\left(\begin{bmatrix}\lambda^k &0 \\ 0 & \lambda^{-k}\end{bmatrix}\begin{bmatrix} b_1 & b_2\\ b_3 & b_4\end{bmatrix}\right)=\lambda^kb_1+\lambda^{-k}b_4.$$
	The fact that $\{\tr\rho(\alpha^k\beta)\,\mid\,k\in\Z\}$ is a finite (respectively, bounded) subset of $\C$ then implies that $\lambda$ is a root of unity (respectively, has absolute value $1$) so that $\rho(\alpha)$ is torsion (respectively, elliptic or central), or that $b_1=b_4=0$. Since the argument applies to any loop $\beta$ such that $(\alpha,\beta)$ is in $(1,1)$-position, it remains only to consider the case where $\rho(\beta)$ has both diagonal entries zero for every such $\beta$. In this case, we see upon reflection that
the restriction $\rho|(\Sigma|a)$ must be diagonal. This shows that $\rho$ is special dihedral. Since $\rho$ satisfies $(*)$ this means that $\Sigma$ moreover has genus at least $2$, so $\Sigma|a$ has genus at least $1$ and it follows from Lemma \ref{redlemm} that $\rho(\alpha)$ is torsion (respectively, elliptic or central).

(b) Suppose that
	$$\rho(\alpha)=s\begin{bmatrix}1 & x \\ 0 &1\end{bmatrix}$$
	for some $s\in\{\pm1\}$ and $x\in\C$. Let us assume $s=+1$; the case $s=-1$ will follow similarly. Let $\beta$ be a simple loop on $\Sigma$ such that $(\alpha,\beta)$ is in $(1,1)$-position, and let us follow the notation for $\rho(\beta)$ from the previous case. For each $k\in\Z$, we have
	$$\tr\rho(\alpha^k\beta)=\tr\left(\begin{bmatrix}1 &kx \\ 0 & 1\end{bmatrix}\begin{bmatrix} b_1 & b_2\\ b_3 & b_4\end{bmatrix}\right)=b_1+b_4+kxb_3.$$
 The fact that $\{\tr\rho(\alpha^k\beta)\,\mid\,k\in\Z\}$ is a finite (respectively, bounded) subset of $\C$ then implies that $x=0$ or $b_3=0$. Since the argument applies to any loops $\beta$ such that $(\alpha,\beta)$ is in $(1,1)$-position, it remains only to consider the case where $\rho(\beta)$ is upper triangular for every such $\beta$. In this case, $\rho$ is upper triangular (hence diagonal). It follows from Lemma \ref{redlemm} that $\rho(\alpha)$ is torsion (respectively, elliptic or central).

The above arguments show that $\rho(\alpha)$ is torsion (respectively, elliptic or central) unless $g\,=\,1$ and 
$\rho$ is special dihedral, as desired.
\end{proof}

\noindent {\bf Types II and III.} Separating essential curves.

\begin{lemma}
\label{type2}
Let $\rho\,:\,\pi_1(\Sigma)\,\to\,\SL_2(\C)$ be a representation satisfying $(*)$, and let $a\,\subset\,
\Sigma$ be a curve of type II. If $\rho$ has finite (respectively, bounded) orbit under $Mod(\Sigma)$ in
$X(\Sigma)$, then $\rho$ has finite (respectively, elliptic or central) monodromy along $a$.
\end{lemma}

\begin{lemma}
\label{type3}
Let $\rho\,:\,\pi_1(\Sigma)\,\to\,\SL_2(\C)$ be a representation satisfying $(*)$, and let $a\,\subset\,
\Sigma$ be a curve of type III. If $\rho$ has finite (respectively, bounded) orbit  under $Mod(\Sigma)$
in $X(\Sigma)$, and moreover $\rho$ has finite (respectively, elliptic or central) monodromy along
all punctures of $\Sigma$, then $\rho$ has finite (respectively, elliptic or central) monodromy along $a$.
\end{lemma}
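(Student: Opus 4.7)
The plan is to split on whether $\rho$ is Zariski dense in $\SL_2(\C)$, paralleling the structure of Lemma~\ref{type1}. If $\rho$ is not Zariski dense, its image lies in a proper algebraic subgroup (Section~\ref{sect:2.1}). In the finite-orbit case, Proposition~\ref{g1prop} combined with hypothesis $(*)$ forces $\rho$ to be finite, so $\rho(a)$ is torsion; in the bounded-orbit case, the bounded analogues of Lemmas~\ref{redlemm} and \ref{dilem2} give that $\rho$ is unitary up to conjugacy, so $\rho(a)$ is elliptic or central. Henceforth assume $\rho$ is Zariski dense.

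Let $\Sigma_0, \Sigma_1$ be the components of $\Sigma|a$, with $\Sigma_0$ of genus zero containing punctures $c_{i_1}, \ldots, c_{i_k}$, and $\Sigma_1$ of positive genus. Choose a basepoint $p \in a$ and let $\alpha \in \pi_1(\Sigma, p)$ represent $a$; van Kampen yields $\pi_1(\Sigma, p) \cong \pi_1(\Sigma_0, p) \ast_{\langle \alpha \rangle} \pi_1(\Sigma_1, p)$. Because $\Sigma_1$ has positive genus, there are many simple closed curves $\beta \subset \Sigma$ with geometric intersection $i(\beta, a) = 2$: take an arc in $\Sigma_1$ wrapping a handle between two points of $a$, and close it through an arc in $\Sigma_0$. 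Any such $\beta$ has free-homotopy class of the form $\beta_0 \beta_1$ with $\beta_0 \in \pi_1(\Sigma_0, p)$ and $\beta_1 \in \pi_1(\Sigma_1, p)$. The Dehn twist along a separating curve acts on $\pi_1$ as the identity on one factor of the amalgam and as conjugation by $\alpha$ on the other, so $\tau_a^n(\beta) \sim \beta_0 \alpha^n \beta_1 \alpha^{-n}$. After conjugating $\rho$, the case $\rho(\alpha) = \pm I$ is immediate. If $\rho(\alpha) = \diag(\lambda, \lambda^{-1})$ with $\lambda \neq \pm 1$, then writing $\rho(\beta_0) = \bigl(\begin{smallmatrix} m_{11} & m_{12} \\ m_{21} & m_{22} \end{smallmatrix}\bigr)$ and $\rho(\beta_1) = \bigl(\begin{smallmatrix} n_{11} & n_{12} \\ n_{21} & n_{22} \end{smallmatrix}\bigr)$, a direct computation gives
\begin{equation*}
\tr\rho(\tau_a^n(\beta)) \;=\; (m_{11} n_{11} + m_{22} n_{22}) + m_{12} n_{21}\, \lambda^{-2n} + m_{21} n_{12}\, \lambda^{2n}.
\end{equation*}
Finite (respectively, bounded) mapping class group orbit of $\rho$ makes this sequence finite (respectively, bounded) in $n \in \Z$, so either $\lambda$ is a root of unity (respectively, $|\lambda| = 1$)---the desired conclusion---or $m_{12} n_{21} = m_{21} n_{12} = 0$.

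If the vanishing $m_{12} n_{21} = m_{21} n_{12} = 0$ holds for every admissible pair $(\beta_0, \beta_1)$, then a structural case analysis, varying $\beta_0$ and $\beta_1$ independently over pairs producing simple $\beta$, forces either $\rho|\pi_1(\Sigma_0)$ or $\rho|\pi_1(\Sigma_1)$ to be diagonal, or else $\rho$ globally into a Borel subgroup; the latter contradicts Zariski density. If $\rho|\pi_1(\Sigma_0)$ is diagonal, the boundary word $\alpha = c_{i_1} \cdots c_{i_k}$ is a product of diagonal matrices $\rho(c_{i_j})$ whose eigenvalues are torsion (respectively, of unit modulus) by the puncture hypothesis, so $\rho(\alpha)$ is torsion (respectively, elliptic or central). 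The $\Sigma_1$-side diagonal case is symmetric, using that commutators of diagonal matrices are trivial. The parabolic case $\rho(\alpha) = \pm \bigl(\begin{smallmatrix} 1 & 1 \\ 0 & 1 \end{smallmatrix}\bigr)$ is handled analogously: the twist trace now becomes a polynomial in $n$ whose positive-degree coefficients must vanish, yielding the same structural dichotomy. The main obstacle is the combinatorial verification that the admissible pairs $(\beta_0, \beta_1)$ giving simple $\beta$ are rich enough, via Zariski density of $\rho$, to propagate the vanishing to the claimed structural conclusion on the restrictions.
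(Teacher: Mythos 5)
Your core mechanism is the paper's: pick a basepoint on $a$, write curves meeting $a$ twice as products $\beta_0\beta_1$ with $\beta_i$ supported on the two sides of $a$, note that $\tw_a^n$ sends such a curve to (the class of) $\beta_0\alpha^n\beta_1\alpha^{-n}$, and read off from the trace formula that either the eigenvalue of $\rho(\alpha)$ is a root of unity (resp.\ of modulus one) or certain off-diagonal products vanish. Your trace computation in the diagonal case agrees with the paper's, and your use of the puncture hypothesis on the genus-zero side (the boundary word is a product of the puncture loops, which become commuting diagonal torsion/unit-modulus matrices) is exactly where the type-III-specific assumption enters in the paper as well. However, there are two genuine gaps. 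First, you explicitly defer ``the combinatorial verification that the admissible pairs $(\beta_0,\beta_1)$ are rich enough'' to force the structural dichotomy --- but that \emph{is} the proof. The paper resolves it without any appeal to Zariski density, by an iterated replacement argument: fixing $\beta$ with $b_2\neq0$ and varying $\gamma$ over all good partners forces $\rho|\Sigma_2$ upper triangular; if $\rho|\Sigma_1$ is also upper triangular then $\rho$ is globally upper triangular, hence diagonal by semisimplicity, and Lemma \ref{redlemm} finishes; otherwise replacing $\beta$ by a non-upper-triangular $\beta'$ forces $\rho|\Sigma_2$ diagonal. Asserting that this ``forces either restriction to be diagonal or $\rho$ into a Borel'' without carrying out this induction leaves the heart of the lemma unproved.

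Second, your parabolic case is mis-described: the vanishing of the positive-degree coefficients there does \emph{not} yield ``the same structural dichotomy.'' With $\rho(\alpha)=\pm\left[\begin{smallmatrix}1&x\\0&1\end{smallmatrix}\right]$, $x\neq0$, and $b_3\neq0$, the conditions are $c_3=0$ \emph{and} $c_1=c_4$, i.e.\ $\rho(\gamma)$ is upper triangular with \emph{parabolic} (not diagonal) image. The correct endgame, as in the paper, is a contradiction: one shows $\rho|\Sigma_2$ has image in $\{\pm\mathbf 1\}$ (for positive genus by examining $\tr\rho(\beta\gamma')$ for nonseparating $\gamma'$; for genus zero by the puncture hypothesis, since a parabolic element with finite or elliptic/central monodromy is central), whence $\rho(\alpha)$ is central, contradicting $x\neq0$. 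Your sketch cannot conclude here as written. A smaller issue: your opening reduction for the non-Zariski-dense \emph{bounded}-orbit case invokes ``bounded analogues'' of Lemma \ref{dilem2} that the paper does not establish; the paper avoids this entirely by funneling all reducible/dihedral degenerations through Lemma \ref{redlemm} inside the case analysis, which is the cleaner route.
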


\begin{proof}[Proof of Lemma \ref{type2} and Lemma \ref{type3}]
Let us choose a base point $x_0$ on $\Sigma$ lying on $a$, and let $\alpha$ be a simple based loop 
parametrizing $a$. Let $\Sigma_1$ and $\Sigma_2$ be the connected components of $\Sigma|a$, and lift the base 
point on $\Sigma$ to $\Sigma_1$ and $\Sigma_2$. Suppose $\beta\in\pi_1(\Sigma_1)$ and 
$\gamma\in\pi_1(\Sigma_2)$ are simple loops such that their product $\beta\gamma$ on $\Sigma$ is homotopic to a 
simple loop (with underlying curve denoted $d$, say) transversely intersecting $a$ exactly twice. For 
convenience, in this paragraph we shall call such a pair $(\beta,\gamma)$ \emph{good}. The loop 
$\beta\alpha^k\gamma\alpha^{-k}$ for each $k\in\Z$ is freely homotopic to a simple loop whose underlying curve 
belongs to the orbit $\langle\tw_a\rangle\cdot d$ (all curves considered up to isotopy). In particular, by our 
hypothesis the set $\{\tr\rho(\beta\alpha^k\gamma\alpha^{-k})\,\mid\,k\in\Z\}$ is a finite (respectively, 
bounded) subset of $\C$. Up to global conjugation of $\rho$ by an element of $\SL_2(\C)$, we may consider two 
cases.

(a) Suppose first that
	$$\rho(\alpha)=\begin{bmatrix}\lambda & 0 \\ 0 &\lambda^{-1}\end{bmatrix},\quad \lambda\in\C^\times.$$
	Let $(\beta,\gamma)$ be a good pair, and let us write
$$\rho(\beta)=\begin{bmatrix} b_1 & b_2\\ b_3 & b_4\end{bmatrix}\quad\text{and}\quad \rho(\gamma)=\begin{bmatrix} c_1 & c_2\\ c_3 & c_4\end{bmatrix}.$$
	For each $k\in\Z$, we have
	\begin{align*}\tr\rho(\beta\alpha^k\gamma\alpha^{-k})&=\tr\left(\begin{bmatrix}b_1 & b_2\\ b_3 & b_4\end{bmatrix}\begin{bmatrix}\lambda^k &0 \\ 0 & \lambda^{-k}\end{bmatrix}\begin{bmatrix}c_1 & c_2\\ c_3 & c_4\end{bmatrix}\begin{bmatrix}\lambda^{-k} &0 \\ 0 & \lambda^{k}\end{bmatrix}\right)\\
	&=b_1c_1+\lambda^{-2k}b_2c_3+\lambda^{2k}c_2b_3+b_4c_4.
	\end{align*}
The fact that $\{\tr\rho(\beta\alpha^k\gamma\alpha^{-k})\,\mid\,k\in\Z\}$ is a finite (respectively, bounded) subset of $\C$ then implies that $\lambda$ is a root of unity (respectively, has absolute value 1) so that $\rho(\alpha)$ is torsion (respectively, elliptic or central), or that $b_2c_3=c_2b_3=0$. If the former happens, then we are done.

Suppose that the latter happens, and that at least one of $b_2$, $b_3$, $c_2$, $c_3$ is nonzero. We shall 
assume $b_2\,\neq\,0$; the other cases will follow similarly. As $b_2c_3\,=\,0$, we must have $c_3=0$. Applying the 
same argument with $\gamma$ replaced by any simple loop $\gamma'\,\in\,\Sigma_2$ such that $(\beta,\,\gamma')$ is 
good, we are reduced to the case where $\rho|\Sigma_2$ is upper triangular. If $\rho|\Sigma_1$ is also upper 
triangular, then $\rho$ must be upper triangular (whence diagonal), and from Lemma \ref{redlemm} $\rho(\alpha)$ 
is torsion (respectively, elliptic or central). So suppose there is a simple loop $\beta'\in\pi_1(\Sigma)$ such 
that $\rho(\beta')$ is not upper triangular. By repeating the above argument with $\beta$ replaced by $\beta'$, 
we are reduced to the case where $\rho|\Sigma_2$ must be diagonal. If $\Sigma_2$ has genus at least $1$, then 
from Lemma \ref{redlemm} it follows that $\rho(\alpha)$ is torsion (respectively, is elliptic or central). If 
$\Sigma_2$ has genus $0$, then the fact that $\rho$ has finite (respectively, elliptic or central) local 
monodromy along the punctures implies that $\rho(\alpha)$ has finite (respectively, elliptic or central) 
monodromy.

It remains to consider the case where $b_2\,=\,b_3\,=\,c_2\,=\,c_3\,=\,0$. Running through all the good pairs $(\beta,\gamma)$ 
and repeating the above argument, we are left with the case where $\rho$ is diagonal; Lemma \ref{redlemm} then 
implies that $\rho(\alpha)$ is torsion (respectively, elliptic or central).

The second case:
	
(b) Suppose that
	$$\rho(\alpha)=s\begin{bmatrix}1 & x \\ 0 &1\end{bmatrix}$$
	for some $s\in\{\pm1\}$ and $x\in\C$. We assume $s=+1$; the case $s=-1$ will follow similarly. Let $(\beta,\gamma)$ be a good pair, and let us follow the notation for $\rho(\beta)$ and $\rho(\gamma)$ from the previous case. For each $k\in\Z$, we have
	\begin{align*}&\tr\rho(\beta\alpha^k\gamma\alpha^{-k})\\
	&=\tr\left(\begin{bmatrix}b_1 & b_2\\ b_3 & b_4\end{bmatrix}\begin{bmatrix}1 &kx \\ 0 & 1\end{bmatrix}\begin{bmatrix}c_1 & c_2\\ c_3 & c_4\end{bmatrix}\begin{bmatrix}1 &-kx \\ 0 & 1\end{bmatrix}\right)\\
	&=b_1c_1+b_2c_3+b_3c_2+b_4c_4+(b_1c_3-b_3c_1-b_4c_3+b_3c_4)kx-b_3c_3k^2x^2.
	\end{align*}
	The fact that $\{\tr\rho(\beta\alpha^k\gamma\alpha^{-k})\,\mid\,k\in\Z\}$ is a finite (respectively, bounded) subset of $\C$ then implies that $x=0$ or $b_3c_3=((b_1-b_4)c_3-b_3(c_1-c_4))=0$. If the former happens, then we are done.
	
	Suppose now that $x\neq0$ and that the latter happens, and that $b_3\neq0$ or $c_3\neq0$. We shall consider the case $b_3\neq0$; the other case will follow similarly. Note then we must then have $c_3=0$ and $c_1=c_4$. Applying the same argument with $\gamma$ replaced by any simple loop $\gamma'\in\Sigma_2$ such that $(\beta,\gamma')$ is good, we conclude that $\rho|\Sigma_2$ is upper triangular with image consisting of parabolic elements in $\SL_2(\C)$. If $\Sigma_2$ has genus at least one, then by considering $\tr\rho(\beta\gamma')$ for good pairs $(\beta,\gamma')$ with $\gamma'$ nonseparating we see that in fact $\rho|\Sigma_2$ must have image in $\{\pm\mathbf 1\}$. If $\Sigma_2$ has genus zero, then by our hypothesis on $\rho$ we again conclude that $\rho|\Sigma_2$ must have image in $\{\pm\mathbf 1\}$. \emph{A fortiori}, $\rho(\alpha)$ is central in both cases.
		
	It only remains to consider the case where $b_3=c_3=0$. Running through all the possible good pairs $(\beta,\gamma)$ and repeating the above argument, we are left with the case where $\rho$ is upper triangular (whence diagonal); then $\rho(\alpha)$ is torsion (respectively, elliptic or central). This completes the proof.
\end{proof}
 
\section{Proof of the main results}\label{sect:5}

The goal of this section is to complete the proof of Theorems A, B, and C. This section is organized as 
follows. In Section \ref{sec-irred} we record an irreducibility criterion for $\SL_2(\C)$-representations of 
positive genus surface groups, to be used in subsequent parts of this section. In Section \ref{sec-g1n2}, we 
establish Theorem C for surfaces of genus one with at most two punctures. In Sections \ref{sec:5.3}, 
\ref{sec:5.4}, and \ref{sec:5.5} we complete the proofs of our main theorems. Finally, in Section \ref{sec:5.6} 
we give an alternative proof of Theorem A for closed surfaces using \cite{gkm,px}.

\subsection{An irreducibility criterion}\label{sec-irred}
\begin{lemma}
\label{loops}
Let $(a,\,b,\,c)$ be a sequence of loops on a surface in $(1,\,2)$-position. The following pairs are in
$(1,\,1)$-position:
$$(a,\,b),\,(a,\,bc),\,(ca,\,b),\,(ab,\,bc),\,(ca,\,cb),\,(ac,\,bc),\,(ca,\,ab).$$
\end{lemma}

\begin{proof}
This is seen by drawing the corresponding loop configurations of homotopic clean sequences. See
Figure \ref{fig5}, noting that any segments not passing the central base point are not to be considered
as part of each loop configuration.
\end{proof}

\begin{figure}[ht]
    \centering
    \includegraphics{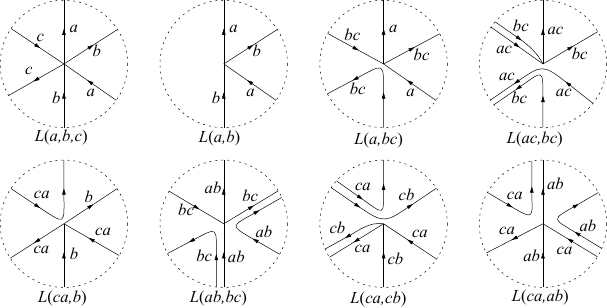}
    \caption{Loop configurations for Lemma \ref{loops}}
    \label{fig5}
\end{figure}

Let $\Sigma$ be a surface of genus $g\geq1$ with $n\,\geq\,0$ punctures. Given a pair $(a,\,b)$ of based loops in 
$(1,\,1)$-position on $\Sigma$, there is an embedding $\Sigma(a,\,b)\,\subset\,\Sigma$ of a surface of genus $1$ with 
$1$ boundary curve, i.e.,~a one-holed torus. Up to isotopy, every embedding of a one-holed torus is of the form 
$\Sigma(a,\,b)$ for some choice of $(a,\,b)$. The notion of loop configuration facilitates the proof of the 
following result, which will be used in the proof of our main theorems but may be of independent interest. (See 
\cite{coopermanning} for a proof when $n\,=\,0$.)

\begin{proposition}
\label{red}
Let $\Sigma$ be a surface of genus $g\,\geq\,1$ with $n\,\geq\,0$ punctures. A representation
$\rho\,:\,\pi_1(\Sigma)\,\to\,\SL_2(\C)$ is irreducible if and only if there is a one-holed torus subsurface
$\Sigma'\,\subset\,\Sigma$ such that the restriction $\rho|\Sigma'$ is irreducible.
\end{proposition}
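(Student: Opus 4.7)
The plan is to prove each direction separately. The direction from existence of an irreducible restriction to irreducibility of $\rho$ is immediate, since any $\rho$-invariant line in $\C^2$ restricts to a $\rho|_{\pi_1(\Sigma')}$-invariant line. The substantive direction is its converse, which I would establish by contrapositive. Assuming $\rho|_{\Sigma(a,b)}$ is reducible for every pair $(a,b)$ in $(1,1)$-position on $\Sigma$, I will produce a common $\rho$-invariant line in $\C^2$. The edge cases $g=1$, $n=0$ (abelian $\pi_1$, never irreducible) and $g=1$, $n=1$ ($\Sigma$ itself is a one-holed torus) are vacuous or trivial, so the interesting range is $g=1$, $n\geq 2$ and $g\geq 2$.

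Fix optimal generators $(a_1,b_1,\ldots,a_g,b_g,c_1,\ldots,c_n)$ of $\pi_1(\Sigma)$. The pair $(a_1,b_1)$ is in $(1,1)$-position, so the hypothesis provides a common eigenvector for $\rho(a_1)$ and $\rho(b_1)$; after a global $\SL_2(\C)$-conjugation one may assume both matrices preserve the line $L=\C e_1$, i.e.~are upper triangular. The main task will be to show that each remaining generator $\gamma\in\{a_2,b_2,\ldots,a_g,b_g,c_1,\ldots,c_n\}$ also satisfies $\rho(\gamma)L=L$, forcing $\rho$ itself to be reducible. For each such $\gamma$, the optimal configuration lets one realize the triple $(a_1,b_1,\gamma)$ in $(1,2)$-position on $\Sigma$ (immediate when $\gamma=c_j$, and for $\gamma=a_i$ or $b_i$ with $i\geq 2$ by passing through a simple loop representing an appropriate word in the optimal configuration). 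Lemma~\ref{loops}(1) then supplies seven pairs in $(1,1)$-position all involving $\gamma$, each generating a reducible subgroup. Writing $\rho(\gamma)=\left[\begin{smallmatrix}p & q\\ r & s\end{smallmatrix}\right]$ and expanding the reducibility obstructions $\tr([\cdot,\cdot])-2=0$ using the upper-triangular form of $\rho(a_1),\rho(b_1)$, one obtains a system of polynomial identities in the entries of $\rho(\gamma)$ in which $r$ appears as a common factor; combining distinct pairs cancels the remaining cofactors to force $r=0$.

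The main obstacle is the degenerate situation in which the common eigenvector of $\rho(a_1)$ and $\rho(b_1)$ is not uniquely determined: either one of them is central (equal to $\pm I$), or they are simultaneously diagonalizable so that both $L_1=\C e_1$ and a second line $L_2$ are jointly invariant. In the central case, I would replace $a_1$ or $b_1$ by another simple loop in $(1,1)$-position with it---obtainable from Lemma~\ref{loops}(1)---chosen to have non-central $\rho$-image; such a choice must exist since $\rho$ is irreducible. In the simultaneously diagonalizable case, a careful use of the full family of seven pairs from Lemma~\ref{loops}(1) allows one to argue that every $\rho(\gamma)$ preserves the same distinguished line among $L_1,L_2$, yielding the desired globally invariant line. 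Handling these degeneracies uniformly across the subcases of $(g,n)$ is the most delicate step; the closed case $n=0$ is already known from Cooper--Manning \cite{coopermanning}, as noted just before the statement.
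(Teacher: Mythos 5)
Your skeleton matches the paper's: the ``if'' direction is immediate, and the converse is proved by contrapositive, reducing to the statement that every triple of loops in $(1,2)$-position has a common eigenvector, which is attacked via the seven $(1,1)$-position pairs of Lemma \ref{loops}(1). However, there are two genuine gaps at exactly the points where the paper has to work hardest.

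First, the assertion that expanding the seven reducibility conditions $\tr([\cdot,\cdot])=2$ ``forces $r=0$'' by cancelling cofactors is precisely the content that must be proved, and it is not a routine elimination. The paper's proof of the corresponding Claim is a multi-case analysis (on whether $\tr(a)=\pm2$, on the Jordan type of $bc$, etc.), and it runs into an exceptional configuration --- $\tr(a),\tr(bc)\neq0$ but $\tr(abc)=0$ --- in which the seven pairs attached to $(a,b,c)$ do \emph{not} suffice. That case is resolved only by invoking Lemma \ref{loops}(2): the triple $(c^{-1}b^{-1}a,b,c)$ is again in $(1,2)$-position, so one obtains seven \emph{new} reducible pairs whose trace data avoids the degeneracy. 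Your proposal never uses Lemma \ref{loops}(2) or any auxiliary triple, so you have no mechanism for closing this case; you would need to either rule it out in your specialized basis (you have not) or import the same trick.

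Second, the degenerate situation you defer --- $\rho(a_1),\rho(b_1)$ central or simultaneously diagonalizable, so the common invariant line is not unique --- is not a technicality to be handled ``carefully'' with the same seven pairs; it is where the bulk of the paper's induction lives. Your strategy only ever examines triples of the form $(a_1,b_1,\gamma)$ for one later generator $\gamma$ at a time. If $\rho(a_1),\rho(b_1)$ are both diagonal, each such triple having a common eigenvector is compatible with $\rho(c_1)$ preserving only $L_1$ and $\rho(c_2)$ preserving only $L_2$, i.e.\ with $\rho$ being irreducible; no amount of information about the single-generator triples can exclude this. One must bring in pairs and triples mixing several later generators, which is what the paper's induction on initial segments does with triples such as $(a_1,a_2a_j,a_k)$, $(a_2,a_1^{-1}a_2a_j,a_k)$ and $(a_{k-1},a_k,a_{2m})$, combined with the observation that three eigenvectors of a non-central matrix cannot be pairwise non-proportional. (A smaller point: in your central-case fix, ``such a choice must exist since $\rho$ is irreducible'' is not available to you --- in the contrapositive you are assuming reducibility of all one-holed torus restrictions and trying to \emph{conclude} reducibility of $\rho$, so you cannot also assume $\rho$ irreducible.)
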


\begin{proof}
The \emph{if} direction is clear. To prove the converse, let us begin by fixing a representation $\rho\,:\,
\pi_1(\Sigma)\,\to\,\SL_2(\C)$ whose restriction to every one-holed torus subsurface is reducible. In this proof, given
$a\,\in\,\pi_1(\Sigma)$ we shall also denote by $a$ the matrix $\rho(a)\,\in\,\SL_2(\C)$ for simplicity. The
statement that
the restriction $\rho|\Sigma(a,b)$ is reducible for an embedding $\Sigma(a,\,b)\,\subset\, \Sigma$
associated to a pair $(a,\,b)$ of loops in $(1,\,1)$-position is equivalent to saying that the pair $(\rho(a),\,
\rho(b))$ of matrices in $\SL_2(\C)$ has a common eigenvector in $\C^2$.

Throughout, we shall be using the following observation: if $a\,\in\,\SL_2(\C)\setminus\{\pm\mathbf 1\}$, and if 
$x,\,y,\,z\,\in\,\C^2$ are eigenvectors of $a$, then at least two of them are proportional; in notation, $x\,\sim\, y$, 
$x\,\sim\, z$, or $y\,\sim\, z$. First, we prove the following claim.

\begin{nclaim}
Any triple $(a,\,b,\,c)$ of loops on $\Sigma$ in $(1,\,2)$-position has a common eigenvector under the
representation $\rho$.
\end{nclaim}

\begin{proof}[Proof of claim]
Let $(a,\,b,\,c)$ be in $(1,\,2)$-position. Each of the pairs
$$(a,\,b),\,(a,\,bc),\,(ca,\,b),\,(ab,\,bc),\,(ca,\,cb),\,(ac,\,bc),\,(ca,\,ab)$$
is in $(1,\,1)$-position by Lemma \ref{loops}, and has a common eigenvector by our hypothesis on $\rho$. If any
of the elements $a,\,b,\,ab,\,ca,\,bc$ are $\pm\mathbf 1$ then $(a,\,b,\,c)$ clearly has a common eigenvector, so we will assume otherwise in what follows. The two steps below run in parallel.

\begin{enumerate}
\item Let $x$ be a common eigenvector of $(ca,\,b)$, and let $y$ be a common eigenvector of $(ca,\,cb)$. If $x\,\sim 
\,y$ then $x$ is a common eigenvector for $(a,\,b,\,c)$, so let us assume otherwise. In particular, $\{x,\,y\}$ is an 
eigenbasis for $ca$. Since $(ca,\,ab)$ has a common eigenvector, it follows that $ab$ has $x$ or $y$ as an 
eigenvector. If $ab$ has $x$ as an eigenvector, then $x$ is a common eigenvector for $(a,\,b,\,c)$. So assume $ab$ 
has $y$ as an eigenvector.

Note that $y$ is then also an eigenvector of $a^2=ab(cb)^{-1}ca$. We may assume that $a^2=-\mathbf 1$, since otherwise $y$ is also an eigenvector of $a$, and hence is a common eigenvector of $(a,b,c)$.

\item Let $x'$ be a common eigenvector of $(a,\,bc)$, and let $y'$ be a common eigenvector of $(ac,\,bc)$. If
$x'\,\sim\, y'$ then $x'$ is a common eigenvector for $(a,\,b,\,c)$, so let us assume otherwise. In particular,
$\{x',\,y'\}$ is an eigenbasis for $bc$. Since $(ab,\,bc)$ has a common eigenvector, it follows that $ab$ has
$x'$ or $y'$ as an eigenvector. If $ab$ has $x'$ as an eigenvector, then $x'$ is a common eigenvector for $(a,\,b,
\,c)$. So assume $ab$ has $y'$ has an eigenvector.

Note that $y'$ is then also an eigenvector of $b^2=bc(ac)^{-1}ab$. We may assume that $b^2=-\mathbf 1$, since otherwise $y'$ is also an eigenvector of $b$, and hence is a common eigenvector of $(a,b,c)$.

\end{enumerate}
Let $z$ be a common eigenvector of $(a,b)$. Since $a^2=b^2=-\mathbf 1$ by the above, the eigenvalue of $ab$ for the eigenvector $z$ is $\pm1$. Since $ab\neq\pm\mathbf1$ by assumption, it follows that $z$ must be a unique eigenvector of $ab$, and hence $z\sim y\sim y'$. Then $z$ is a common eigenvector of $(a,b,c)$. This completes the proof of our claim.
\end{proof}

To prove the proposition, we use the following inductive argument. First, let us assume that the image of 
$\rho$ is not contained in $\{\pm\mathbf 1\}$, since otherwise the proposition is clear. We thus can, and will, 
choose a nonseparating loop $a_1$ such that $a_1\,\neq\,\pm\mathbf 1$. Let us complete $a_1$ to an optimal sequence
$$(a_1,\,a_2,\,\cdots,\,a_{2g-1},\,a_{2g},\,a_{2g+1},\,\cdots,\,a_{2g+n})$$
of generators of $\pi_1(\Sigma)$. We show that $(a_1,\,\cdots,\,a_{2g+n})$ has a common eigenvector. For
simplicity of arguments we may assume that at least one element in each pair
$$(a_1,\,a_2),\,\cdots,\,(a_{2g-1},\,a_{2g})$$
is not equal to $\pm\mathbf 1$; for if some pair is of the form $(a,\,b)$ with $a,\,b\,\in\,\{\pm\mathbf 1\}$ we
may simply skip over that pair in the considerations below.

If $(g,\,n)\,=\,(1,\,0)$, then we are done since every representation $\pi_1(\Sigma)\,\to\,\SL_2(\C)$
is abelian. We thus assume $(g,\,n)\,\neq\,(1,\,0)$. By our claim, $(a_1,\,a_2,\,a_3)$ has a common
eigenvector. Assume next that $4\,\leq\, k\,\leq\, 2g+n$ and $(a_1,\,\cdots,\,a_{k-1})$ has a common eigenvector
$x\,\in\,\C^2$. We show that $(a_1,\,\cdots,\,a_k)$ has a common eigenvector. We consider the following cases.
\begin{enumerate}
	\item[\textup{(1)}] $k\,=\,5,\,7,\,\cdots,\,2g-1$, or $k\,=\,2g+1,\,2g+2,\,\cdots,\,2g+n$. The triple
$(a_1,\,a_2,\,a_k)$ is in $(1,\,2)$-position, and hence has a common eigenvector $y$ by our claim.
	
Given any $3\,\leq\, j\,<\,k$, note that $(a_1,\,a_2a_j,\,a_k)$ is in $(1,\,2)$-position and hence has a common
eigenvector by our claim, say $y_j$. Since $a_1\,\neq\,\pm\mathbf 1$ by assumption, we must have
	$$x\,\sim\, y,\,\quad x\,\sim\, y_j,\quad\text{or}\quad y_j\,\sim\, y.$$
	If one of the first two occurs, then we conclude that $(a_1,\,\cdots,\,a_k)$ has a common eigenvector
$x$, as required. Thus, we are left with the case $y_j\,\sim\, y$ for every $3\,\leq\, j\,<\,k$. But this implies
that $(a_1,\,a_2,\,a_2a_j,\,a_k)$ has a common eigenvector $y$, which is thus shared also by $a_j$. Thus, $y$ is
a common eigenvector of $(a_1,\,\cdots,\,a_k)$, as desired.
	
	\item[\textup{(2)}] $k\,=\,4,\,6,\,\cdots,\,2g$. First, consider the case where $a_{k-1}\,=\,
\pm\mathbf 1$. It then suffices to show that the sequence $(a_1,\,\cdots,\,a_{k-2},a_k)$ has a common
eigenvector. This can be shown by repeating the argument the previous case (1) above. Thus, we may assume that
$a_{k-1}\,\neq\,\pm\mathbf 1$. Note that, for each integer $m$ with $2m\,<\,k$, by our claim we have:
	\begin{itemize}
		\item a common eigenvector $w_m$ of $(a_{k-1},\,a_k,\, a_{2m})$, and
		\item a common eigenvector $z_m$ of $(a_{k-1},\,a_k,\, a_{2m-1})$.
	\end{itemize}
	Since $a_{k-1}\neq\pm\mathbf 1$, we must have
	$$x\sim w_m,\quad x\sim z_m,\quad\text{or}\quad w_m\sim z_m.$$
	If one of the first two occurs, then we conclude that $(a_1,\cdots,a_k)$ has a common eigenvector $x$, and we are done. Thus, we are left with the case where $w_m\sim z_m$ for every $2m<k$. Note in this case that $w_m$ is a common eigenvector of $(a_{2m-1},a_{2m},a_{k-1},a_k)$. Comparing the vectors $x,w_m$, and $w_{m'}$ for different $m,m'$, we are in turn reduced to the case $w_m\sim w_{m'}$ for all $m,m'$, in which case $(a_1,\cdots,a_k)$ has a common eigenvector $w_1$, as desired.
\end{enumerate}
Thus, $(a_1,\cdots,a_k)$ has a common eigenvector. This completes the induction, and shows that $(a_1,\cdots,a_{2g+n})$ has a common eigenvector, proving the proposition.
\end{proof}

\subsection{Genus $1$ with $1$ or $2$ punctures}\label{sec-g1n2}

We now give a separate proof of Theorem C for surfaces of genus one with one or two punctures. Let 
$\Sigma_{1,1}$ denote a surface of genus one with one puncture. We begin with the following:

\begin{lemma}\label{lem-s11}
Let $\rho \,: \,\pi_1(\Sigma_{1,1}) \,\to\, \SL_2(\C)$ be a semisimple representation such that each nonseparating
simple loop in $\Sigma_{1,1}$ maps to an elliptic or central element of $\SL_2(\C)$.
Then $\rho$ is unitarizable, i.e.,\ $\rho (\pi_1(\Sigma_{1,1}))$ has a global fixed point in $\H^3$.
\end{lemma}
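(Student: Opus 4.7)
The plan is to reduce to Proposition \ref{psw2}: the only simple closed curves on $\Sigma_{1,1}$ are the nonseparating ones (handled by hypothesis) and the boundary/puncture-parallel loop $c = [a,b]$, so it suffices to show $\tr\rho(c) \in [-2,2]$, after which \ref{psw2} delivers unitarizability, equivalently a global fixed point in $\H^3$. The reducible case is immediate: a semisimple reducible $\rho$ is diagonalizable, and Lemma \ref{redlemm} applied to the nonseparating generators $a, b$ forces the eigenvalues of $\rho(a), \rho(b)$ onto the unit circle.

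Assume then that $\rho$ is irreducible. Since nonseparating simple loops on $\Sigma_{1,1}$ correspond bijectively (up to free homotopy and orientation reversal) to primitive elements of $\pi_1(\Sigma_{1,1}) \cong F_2$, the hypothesis reads $\tr\rho(w) \in [-2,2]$ for every primitive $w \in F_2$. If every such $\rho(w)$ is of finite order, Theorem \ref{psw} gives $\rho$ finite, and any finite subgroup of $\SL_2(\C)$ is unitarizable by Weyl's averaging trick, so we would be done. Otherwise, I choose a primitive $w_0$ with $\rho(w_0)$ elliptic of infinite order and extend it to a basis of $F_2$; after relabeling I may assume $w_0 = a$, so that $x := \tr\rho(a) = 2\cos\alpha$ with $\alpha/\pi$ irrational. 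In particular $|x| < 2$ strictly: $\rho(a) = \pm I$ would make $\rho$ abelian and hence, being semisimple, reducible, while $\rho(a)$ parabolic would make the sequence $n \mapsto \tr\rho(a^n b)$ affine-linear by the argument of Lemma \ref{type1}, forcing the lower-left entry of $\rho(b)$ to vanish and again contradicting irreducibility.

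The key step is a Cayley--Hamilton recurrence. Each $a^n b$ is primitive in $F_2$ (the Nielsen move $(a,b) \mapsto (a, a^n b)$ is an automorphism), so $t_n := \tr\rho(a^n b)$ lies in $[-2,2]$ for every $n \in \Z$. The identity $\rho(a)^{n+1} = x\rho(a)^n - \rho(a)^{n-1}$ yields the linear recurrence $t_{n+1} = x t_n - t_{n-1}$ with $t_0 = y := \tr\rho(b)$ and $t_1 = z := \tr\rho(ab)$, and since $|x| < 2$ its general solution is
\[
t_n = A e^{in\alpha} + \overline{A} e^{-in\alpha}, \qquad |A|^2 = \frac{y^2 + z^2 - xyz}{4 - x^2}.
\]
Because $\alpha/\pi$ is irrational, $\{n\alpha \bmod 2\pi\}$ is dense in $[0, 2\pi)$, so $\sup_{n \in \Z} |t_n| = 2|A|$; the uniform bound $|t_n| \leq 2$ forces $|A| \leq 1$, that is, $\kappa := x^2 + y^2 + z^2 - xyz \leq 4$. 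Viewing $\kappa$ as a quadratic in $z$, its minimum $x^2 + y^2 - x^2 y^2/4 = x^2(1 - y^2/4) + y^2$ is nonnegative on $[-2,2]^3$, so $\kappa \geq 0$ there; the Fricke identity $\tr\rho([a,b]) = \kappa - 2$ then places $\tr\rho(c) \in [-2,2]$. Every simple closed curve on $\Sigma_{1,1}$ now has elliptic or central $\rho$-image, and Proposition \ref{psw2} produces the required fixed point in $\H^3$.

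The principal obstacle is exactly this boundary bound. The hypothesis only constrains primitive traces pointwise, and the non-obvious ingredient is using equidistribution of $\{n\alpha\}$ to promote the individual bounds $|t_n| \leq 2$ into the sharp global inequality $\kappa \leq 4$; the Fricke identity then converts this into the desired constraint on $\tr\rho([a,b])$.
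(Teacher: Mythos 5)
Your route is genuinely different from the paper's (which simply cites Tan--Wong--Zhang and Dubrovin--Mazzocco), and the core of it --- the recurrence $t_{n+1}=xt_n-t_{n-1}$, the closed form $|A|^2=(y^2+z^2-xyz)/(4-x^2)$, and the use of equidistribution of $\{n\alpha\}$ to upgrade the pointwise bounds $|t_n|\leq 2$ to $\kappa\leq 4$ --- is correct and attractive. However, there is a genuine gap in your first branch, the case where \emph{every} primitive element has finite-order image. There you invoke Theorem \ref{psw}, but that theorem (as stated in the paper and as used throughout Sections \ref{sect:4}--\ref{sect:5}) requires finite monodromy along \emph{all} simple loops, and on $\Sigma_{1,1}$ this includes the peripheral loop $[a,b]$. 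Establishing control of $\rho([a,b])$ is exactly what the lemma is trying to do, so you cannot assume it; indeed the paper itself only applies Theorem \ref{psw} after separately proving finiteness along boundary curves (type IV), via the Galois-conjugate/Kronecker argument of Section \ref{sec:5.3}. This case cannot be absorbed into your main argument either: when $\alpha/\pi$ is rational the set $\{n\alpha \bmod 2\pi\}$ is finite, $\sup_n|t_n|$ can be strictly smaller than $2|A|$ (e.g.\ for $x=0$ one only gets $\kappa\leq 8$ from the single basis $(a,b)$), so the density mechanism genuinely fails and a separate argument --- e.g.\ a descent over several bases in the style of Bowditch/Tan--Wong--Zhang, or the hyperbolic-geometry argument the paper uses for $\Sigma_{1,2}$ --- is required.

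Two smaller points. First, in the final step you conclude from $\tr\rho(c)\in[-2,2]$ that $\rho(c)$ is elliptic or central, but trace $\pm 2$ does not by itself give this. The endpoints are in fact fine --- $\kappa=4$ forces reducibility by Lemma \ref{tracelem}, and your own minimization shows $\kappa=0$ on $[-2,2]^3$ only at $(x,y,z)=(0,0,0)$, the quaternion representation with $\rho(c)=-\mathbf{1}$ --- but you should say so. Second, the citation of Lemma \ref{redlemm} in the reducible case is spurious (that lemma concerns mapping class group orbits, not ellipticity); what you actually need there is just that a diagonal elliptic-or-central matrix has unit-modulus eigenvalues, which is immediate.
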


\begin{proof}
This is known in the literature; we cite two sources below.

First, the lemma can be obtained as an immediate consequence of \cite[Theorem 1.2]{twz}. We sketch the 
connection of the setup in the current paper with that in \cite{twz}. Since every simple closed non-peripheral 
curve in $\Sigma_{1,1}$ maps to an elliptic element of $\SL_2(\C)$, it follows that we have 
$\tr\rho(a)\,\in\,[-2,\,2]$ for every $a$ in the curve complex of $\Sigma_{1,1}$. Hence, in the terminology of 
\cite{twz}, the set of end-invariants is given by the set of all projective measured laminations on 
$\Sigma_{1,1}$. It now follows from \cite[Theorem 1.2]{twz} that $\rho (\pi_1(\Sigma_{1,1}))$ is either 
unitarizable or dihedral. We finish by observing that a dihedral representation $\rho\,:\,\pi_1(\Sigma)\,\to\, 
D_\infty\,\subset\,\SL_2(\C)$ sending every simple nonseparating simple loop to an elliptic or central element is 
unitarizable.

Second, a direct proof of what is essentially the contrapositive, based on an explicit presentation of the 
character variety $X(\Sigma_{1,1})$ (see the Appendix), is given as the Algebraic Lemma in \cite[Section 
1.4.2]{dm}.
\end{proof}

We now turn to the twice-punctured torus $\Sigma_{1,2}$. We start with the following suggestive presentation of 
its fundamental group: $$\pi_1(\Sigma_{1,2}) \,=\,\langle u, x, y, p_1, p_2 \mid uxy = p_1, \, uyx = p_2 \rangle$$ 
where $p_1, \, p_2$ denote loops around the two different punctures of $\Sigma_{1,2}$. See Figure \ref{figs12} 
below and section 5.3 of \cite{goldman2}.

\begin{figure}[ht]
	\centering
	\includegraphics[scale=0.3]{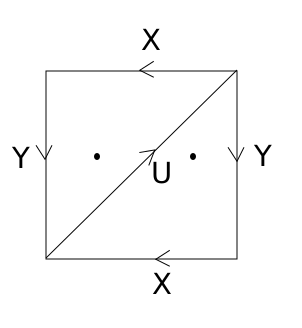}
	\caption{Preferred generators for $\Sigma_{1,2}$}
	\label{figs12}
\end{figure}

\begin{proposition}\label{prop-s12}
Let $\rho\,:\, \pi_1(\Sigma_{1,2}) \,\to\, \SL_2(\C)$ be a semisimple representation such that each nonseparating
simple loop in $\Sigma_{1,2}$ maps to an elliptic or central element of $\SL_2(\C)$.
Then $\rho$ is unitarizable.
\end{proposition}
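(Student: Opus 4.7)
The strategy is to reduce to Lemma \ref{lem-s11} applied on one-holed torus subsurfaces of $\Sigma_{1,2}$. I first dispose of the reducible case: by semisimplicity $\rho$ is diagonal, $\rho = \diag(\chi, \chi^{-1})$, and the ellipticity hypothesis forces $|\chi(g)| = 1$ for every nonseparating simple loop $g$. Taking $g \in \{a, b, ac_1\}$ (all simple nonseparating, with $ac_1$ a simple loop by Example \ref{exgen} and nonseparating by homology), we obtain $|\chi(a)| = |\chi(b)| = |\chi(c_1)| = 1$. Since $a, b, c_1$ generate $\pi_1(\Sigma_{1,2})$, it follows that $|\chi| \equiv 1$ on all of $\pi_1(\Sigma_{1,2})$ and $\rho$ is unitary.

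Assume henceforth that $\rho$ is irreducible. By Proposition \ref{red}, some one-holed torus subsurface $\Sigma' \subset \Sigma_{1,2}$ has $\rho|_{\pi_1(\Sigma')}$ irreducible. Every non-peripheral simple closed curve on $\Sigma'$ remains nonseparating on $\Sigma_{1,2}$ and so maps to an elliptic or central element by hypothesis; Lemma \ref{lem-s11} then yields that $\rho|_{\pi_1(\Sigma')}$ is unitarizable. After a global conjugation I may take $\rho(\pi_1(\Sigma')) \subset \SU(2)$, irreducibly. Writing $\pi_1(\Sigma_{1,2}) = \langle a, b, c_1\rangle$ (free of rank $3$) with $\pi_1(\Sigma') = \langle a, b\rangle$, it then suffices to show $\rho(c_1) \in \SU(2)$, from which $\rho(c_2) = [\rho(b), \rho(a)] \rho(c_1)^{-1} \in \SU(2)$ follows automatically.

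The key observation is that $\SU(2)$ is cut out from $\SL_2(\C)$ by the real-trace conditions $\tr(MN) \in \R$ for $N$ ranging over an $\R$-basis of the real form $\hfrak \subset M_2(\C)$ containing $\SU(2)$ (equivalently, the four-real-dimensional quaternion subspace), and by irreducibility the $\R$-span of $\rho(\pi_1(\Sigma'))$ equals $\hfrak$. Using the optimal generator criterion of Example \ref{exgen} together with a homology computation, the products $ac_1, bc_1, abc_1$ all represent simple nonseparating closed curves on $\Sigma_{1,2}$; supplementing with further simple nonseparating curves of the form $w(a,b) c_1$ obtained via Dehn twists of the basic ones, one extracts sufficiently many real-trace constraints $\tr(\rho(c_1)\rho(w)) \in \R$ to force $\rho(c_1) \in \hfrak = \SU(2)$, using $\det \rho(c_1) = 1$.

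The main obstacle is ensuring that the simple nonseparating curves involving $c_1$ yield trace constraints spanning a full $\R$-basis of $\hfrak$. For generic irreducible $\rho(a), \rho(b) \in \SU(2)$ the four images $\rho(a), \rho(b), \rho(ab), \rho(ba)$ do span $\hfrak$, so the constraints coming from $ac_1, bc_1, abc_1$, and $bac_1$ already suffice; but in degenerate cases — e.g., when $\rho(a), \rho(b)$ are half-turns of order $4$ in $\SU(2)$, whose $\R$-span collapses to only three dimensions — one must invoke Lemma \ref{lem-s11} on a second one-holed torus subsurface such as $\Sigma'' \subset \Sigma_{1,2}$ with $\pi_1(\Sigma'') = \langle a, bc_1\rangle$ (easily verified to be in $(1,1)$-position) and reconcile the two resulting $\SU(2)$-conjugates via the shared non-central element $\rho(a)$ supplemented by trace constraints from further nonseparating simple loops meeting both subsurfaces. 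This final compatibility argument is the technical heart of the proof.
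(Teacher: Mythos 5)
Your approach is genuinely different from the paper's (which works in $\H^3$: by Lemma \ref{lem-s11} the images of $u,x,y$ in the preferred presentation are elliptics with coplanar, pairwise intersecting axes, and one derives a contradiction from the ellipticity of $\rho(uxyx)$ using a reflection decomposition and the coplanarity criterion of Gallo--Kapovich--Marden). Your quaternionic reduction is attractive and the key linear-algebra facts are correct: $\SU(2)$ is indeed cut out of $\SL_2(\C)$ by $\det=1$ together with $\tr(MN)\in\R$ for $N$ ranging over a real basis of the quaternion subspace $\hfrak$, and an irreducible subgroup of $\SU(2)$ does $\R$-span $\hfrak$. However, the proof is not complete: the hypothesis only gives you $\tr(\rho(w)\rho(c_1))\in\R$ for the restricted set of $w\in\pi_1(\Sigma')$ such that $wc_1$ is freely homotopic to a \emph{nonseparating simple} closed curve, and everything hinges on showing that these particular $\rho(w)$ span $\hfrak$. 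You correctly observe that this can fail (e.g.\ when $\rho(a),\rho(b)$ generate a quaternion group $Q_8$, the accessible $\rho(w)$ can span only the trace-zero hyperplane plus part of $\R\cdot\mathbf 1$), and at exactly that point you defer to a "compatibility argument" between two unitarizations that you describe but do not carry out. That deferred step is not routine: two conjugates of $\SU(2)$ sharing the non-central element $\rho(a)$ correspond to two fixed points both lying on the axis of $\rho(a)$ in $\H^3$, and nothing you have written forces them to coincide. Since you yourself label this the "technical heart," the argument as it stands has a genuine gap.

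Two further points. First, Lemma \ref{lem-s11} requires the restriction to the one-holed torus to be \emph{semisimple}; this is automatic for the $\Sigma'$ produced by Proposition \ref{red} (irreducible), but not for the auxiliary subsurface $\Sigma''$ with $\pi_1(\Sigma'')=\langle a,bc_1\rangle$ that you invoke in the degenerate case. The paper devotes a separate paragraph to ruling out non-semisimple restrictions to one-holed torus subsurfaces before it can apply Lemma \ref{lem-s11}; you would need the same preliminary reduction. Second, your list of usable curves needs care: Example \ref{exgen} certifies simplicity only for products respecting the cyclic order of the optimal generators, so $ac_1$, $bc_1$, $abc_1$ are fine, but $bac_1$ is not covered by that criterion and should not be asserted without proof; the Dehn-twist images $a^{n}bc_1$ and $ab^{n}c_1$ that you mention in passing are the safer supply of constraints, and with them the generic case does go through.
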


Combining with Lemma \ref{type1}, we note that Lemma \ref{lem-s11} and Proposition \ref{prop-s12} prove Theorem 
C for surfaces of genus one with at most two punctures. Our proof of Proposition \ref{prop-s12} shall rely on 
the following observation. In what follows, $R_L$ denotes a hyperbolic reflection on the totally-geodesic plane 
$L$ in $\H^3$.

\begin{lemma}\label{lem-plane}
Let $A$ and $B$ be two non-trivial elliptic rotations with distinct but intersecting axes lying on a common 
plane $K$.  Then $AB$ is an elliptic rotation that has axis lying on a plane $Q$ that is at equal angles from 
$K$ and the plane $K^\prime$ obtained as the image of $K$ under $A$.
\end{lemma}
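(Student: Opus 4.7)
The plan is to use the classical reflection decomposition of elliptic isometries of $\H^3$: any rotation with axis $\ell$ and angle $\theta$ equals $R_{L_1}R_{L_2}$ for any two totally geodesic planes $L_1, L_2$ both containing $\ell$ whose signed dihedral angle along $\ell$ is $\theta/2$. I will exploit the freedom in this decomposition by picking one of the two reflecting planes for each of $A$ and $B$ to be the common plane $K$ itself, so that the two copies of $R_K$ in the product $AB$ cancel.

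Concretely, since $\ell_A \subset K$ and $\ell_B \subset K$, I would write
$$A = R_{K_A}\, R_K \quad \text{and} \quad B = R_K\, R_{K_B},$$
where $K_A$ is the totally geodesic plane containing $\ell_A$ meeting $K$ at signed dihedral angle $\theta_A/2$, and $K_B$ is defined analogously from $\ell_B$ and $\theta_B/2$. Multiplying and using $R_K^2 = \mathrm{Id}$ gives $AB = R_{K_A}\, R_{K_B}$.

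To identify the axis of $AB$, I would observe that the point $p = \ell_A \cap \ell_B \in K$ lies in both $K_A$ and $K_B$, so $K_A \cap K_B$ is a nonempty geodesic line $\ell_{AB}$. Moreover $K_A \neq K_B$, because $K_A = K_B$ would force the distinct geodesics $\ell_A, \ell_B \subset K \cap K_A$ to coincide (their intersection with $K$ being a single line). Hence $AB = R_{K_A}R_{K_B}$ is an elliptic rotation about $\ell_{AB}$, and in particular $\ell_{AB} \subset K_A$.

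Finally, I would set $Q := K_A$. The equal-angle property then falls out of the construction: applying $A$ to $K$ yields $K' = A(K) = R_{K_A}(R_K(K)) = R_{K_A}(K)$, so $K_A$ is exactly the reflection plane sending $K$ to $K'$, and therefore bisects the dihedral angle between them. Thus $Q$ contains the axis of $AB$ and lies at equal angles from $K$ and $K'$, as required. The main technical point, rather than a genuine obstacle, is fixing the orientation conventions so that $K_A$ is the bisector on the correct side of $K$ (and not its antipodal bisector); once signed rotation angles and a choice of normal are pinned down this is routine bookkeeping.
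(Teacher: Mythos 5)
Your proof is correct and follows essentially the same route as the paper's: decompose $A = R_{K_A}\circ R_K$ and $B = R_K\circ R_{K_B}$ so that the shared middle reflection $R_K$ cancels, giving $AB = R_{K_A}\circ R_{K_B}$, a rotation about $K_A\cap K_B \subset K_A = Q$. You additionally spell out two points the paper leaves implicit, namely that $K_A\neq K_B$ (so $AB$ is genuinely elliptic rather than the identity) and that $K' = A(K) = R_{K_A}(K)$, which is exactly why $Q=K_A$ bisects the dihedral angle between $K$ and $K'$.
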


\begin{proof}
We note that $A$ is a composition of two hyperbolic reflections, that is,$ A \,=\, R_Q \circ R_K$ where $Q$ is a 
totally-geodesic plane containing the axis of $A$, and is at half the rotation angle (of $A$) from $K$. 
Similarly, $B\, =\, R_K \circ R_S$, where $S$ is a totally-geodesic plane that is at half the rotation angle of $B$ 
from $K$. Hence the composition $AB \,=\, R_Q \circ R_S$, and its axis is the intersection of the planes $Q$ and 
$S$. In particular, the axis lies on the plane $Q$, proving the lemma.
\end{proof}

\begin{proof}[Proof of Proposition \ref{prop-s12}]
Suppose that $\rho\,:\, \pi_1(\Sigma_{1,2})\,\to\, \SL_2(\C)$ is a representation satisfying the hypothesis of the 
proposition. We may assume that $\rho$ is irreducible, since otherwise the result is clear.

Moreover, we may assume that the restriction of $\rho$ to any one-holed torus subsurface of $\Sigma_{1,2}$ is 
semisimple. Indeed, otherwise, we may choose an optimal sequence $(a_1,\,b_1,\,c_1,\,c_2)$ of generators of 
$\pi_1(\Sigma)$ such that the restriction of $\rho$ to the one-holed torus $\Sigma'\,\subset\,\Sigma$ associated to 
the pair $(a_1,\,b_1)$ is upper triangular, with the boundary loop $c\,=\,c_1c_2$ having non-central parabolic 
monodromy. The irreducibility of $\rho$ then implies that $\rho(c_1)$ and $\rho(c_2)$ cannot be upper 
triangular. But then an argument as in part (b) of the proof of Lemma \ref{type3} shows that the restriction 
$\rho|\Sigma'$ is reducible and moreover $\rho(a_1)$ and $\rho(b_1)$ are parabolic, whence they must both be 
central and $\rho(c)$ is also central; a contradiction.

Thus, in what follows, $\rho\,:\, \pi_1(\Sigma_{1,2}) \,\to\, \SL_2(\C)$ is an irreducible representation satisfying 
the hypothesis of the proposition, with the property that its restriction to every one-holed torus subsurface 
is semisimple. We may further assume that none of the nonseparating simple loops have central monodromy. 
Indeed, suppose we have a nonseparating simple loop $u$ such that $\rho(u)$ is central. Completing $u$ to a 
sequence $(u,\,x,\,y)$ giving a preferred presentation of $\pi_1(\Sigma_{1,2})$ above, it then suffices to show 
that the restriction of $\rho$ to the subsurface $\Sigma(x,y)\,\subset\,\Sigma $ of genus one with one boundary 
curve is unitarizable, which is treated in Lemma \ref{lem-s11}.

Let us write $U \,=\, \rho (u)$, $X\, =\, \rho (x)$, and $Y\, =\, \rho (y)$ for the presentation of 
$\pi_1(\Sigma_{1,2})$ given above. By Lemma \ref{lem-s11}, the elements $U, X, Y$ have coplanar, pairwise 
intersecting axes. We call the common plane $P$. Let $p$ be the intersection point of the axes of $X$ and $Y$. 
It suffices to prove that $U$ fixes $p$ as well, that is, the axis of $U$ also passes through $p$. Assume that 
the axis of $U$ does not pass through $p$; we shall eventually reach a contradiction.

Consider the element $ U XY X\,=\,UT,$ say. Let $Z\,=\,X^{-1} Y X$ so that $T\, =\, X^2 Z$. Note that $T$ is 
elliptic as it fixes the intersection of the axes of $X, Y$.  Moreover, it can be easily checked using the loop 
configuration diagram that the curve represented by the element $uxyx$ is simple, closed, and essential.  Hence 
$UT$ is also an elliptic element. However, we also have:

\begin{claim}\label{claim-non-coplanar}
The axis of $T \,=\, XYX$ does not lie on the plane $P$  unless $X$ is a $\pi$--rotation. 
\end{claim}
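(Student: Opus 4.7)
The plan is to determine the axis of $T=XYX$ via the decomposition $T=X^{2}Z$ with $Z=X^{-1}YX$ provided in the text, and then locate it relative to the plane $P$ by using the geometry of rotations about axes lying in $P$.

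First I would observe that the axis of $Z$ equals the image $X^{-1}(\text{axis}(Y))$, since $Z$ is the conjugate of $Y$ by $X^{-1}$. Because $\text{axis}(X)\subset P$, the isometry $X^{-1}$ (which is a rotation about $\text{axis}(X)$) preserves the totally geodesic plane $P$ setwise precisely when its rotation angle is a multiple of $\pi$. Under the standing nontriviality assumption on $X$, this happens exactly when $X$ is a $\pi$-rotation. Hence, if $X$ is not a $\pi$-rotation, then $\text{axis}(Z)\not\subset P$.

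Second, I would apply Lemma \ref{lem-plane} to the product $T=X^{2}\cdot Z$. Both $X^{2}$ and $Z$ fix $p$, so their axes pass through $p$ and span a common plane $K$; the lemma then places $\text{axis}(T)$ on a bisecting plane $Q$ of $K$ and $X^{2}(K)$. Tracking $Q$ as a function of the rotation angle of $X^{2}$ and of the dihedral angle between $K$ and $P$ should yield that $\text{axis}(T)\subset P$ forces $X$ to be a $\pi$-rotation. The converse direction is a quick sanity check: if $X$ is a $\pi$-rotation then $X^{2}=\pm I$, so $T=\pm Z$ and $\text{axis}(T)$ is the reflection of $\text{axis}(Y)$ across $\text{axis}(X)$ within $P$, which indeed lies in $P$.

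The main obstacle is the bookkeeping in Step~2, namely tracking the bisecting plane $Q$ produced by Lemma \ref{lem-plane} for the product $X^{2}\cdot Z$. A cleaner computational alternative is to pass to $\mathrm{Stab}(p)\cong\mathrm{SO}(3)$ via its action on $T_{p}\H^{3}$, represent $X$ and $Y$ as unit quaternions about their respective axes in $P$, and read off the axis of $T$ from the imaginary part of the quaternion product $XYX$; the condition $\text{axis}(T)\subset P$ then reduces to an explicit trigonometric equation in the rotation angles of $X$ and $Y$ and the angle between their axes, which isolates $X$ as a $\pi$-rotation.
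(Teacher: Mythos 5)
Your setup (the decomposition $T=X^{2}Z$ and the observation that $\mathrm{axis}(Z)=X^{-1}(\mathrm{axis}(Y))\not\subset P$ when $X$ is not a $\pi$-rotation) matches the paper's, but the entire content of the claim is concentrated in the step you defer with ``should yield,'' and that step fails when carried out. Apply Lemma \ref{lem-plane} to $A=X^{2}$, $B=Z$ on their common plane $K=X^{-1}(P)$: the lemma places $\mathrm{axis}(T)$ on the plane $Q$ determined by $R_Q\circ R_K=X^{2}$. Parametrize the planes through $\mathrm{axis}(X)$ by an angle mod $\pi$ with $P$ at $0$; if $X$ rotates by $\theta$, then $K=X^{-1}(P)$ sits at $-\theta$, and $R_Q\circ R_K=X^{2}$ forces $Q$ to sit at $-\theta+\theta=0$, i.e.\ $Q=P$. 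Your proposed quaternion computation gives the same verdict: with $q_X=\cos\frac{\theta}{2}+\sin\frac{\theta}{2}\,i$ and $q_Y=\cos\frac{\psi}{2}+\sin\frac{\psi}{2}(\cos\phi\, i+\sin\phi\, j)$ (both axes in the $ij$-plane, playing the role of $P$), the $k$-component of $q_Xq_Yq_X$ is identically zero, so the ``explicit trigonometric equation'' you expect to isolate $\pi$-rotations is $0=0$. The conceptual reason is that the reflection $R_P$ conjugates each of $X$ and $Y$ to its inverse (their axes lie in $P$), hence conjugates $XYX$ to $(XYX)^{-1}$, which pins its axis into $P$. A concrete instance: if $X$ and $Y$ are rotations by $\pi/2$ about perpendicular axes in $P$ meeting at $p$, then $XYX$ is a $\pi$-rotation about an axis in $P$ bisecting them, and $X$ is not a $\pi$-rotation.

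So the gap is not merely that your Step 2 is unexecuted; the statement appears to be false as written, and no bookkeeping will close it. For comparison, the paper's own argument follows exactly your route but feeds Lemma \ref{lem-plane} the plane $X(K)=P$ where the lemma calls for $A(K)=X^{2}(K)=X(P)$; with the correct input the resulting plane $Q$ is $P$ itself, in agreement with the computations above. Your instinct to track $Q$ explicitly, or to compute in $\SO(3)$, was the right one --- but it exposes the problem rather than resolving it, and it also undercuts the use the paper makes of this claim in the proof of Proposition \ref{prop-s12}.
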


\begin{proof}[Proof of claim] 
Note that since $Z \,=\, X^{-1}YX$,  the axis of $Z$ is the image of the axis of $Y$ under $X^{-1}$.  
Choose the plane $K$ to be the one containing the axes of $X$ and $Z$. 
Then 
$ K\,=\,X^{-1}(P)$, i.e.,\ the image of $K$ under $X$ is the plane $P$.
Since $X^2$ and $Z$ have the same axes as $X$ and $Z$ respectively,  Lemma \ref{lem-plane} shows that the
axis of $X^2Z$ will be on a plane that is at equal angles from $K$ and $P$.  In particular, the axis
does not lie on the plane $P$ unless $X^2$ is the identity element, that is, unless $X$ is a $\pi$--rotation.
\end{proof}

Since we have assumed that the axis of $U$ does not pass through $p$, the plane $P$ is the unique plane that 
contains both $p$ and the axis of $U$. Note that the axis of $T$ contains $p$; hence if it does not lie in the 
plane $P$, the axes of $U$ and $T$ cannot lie on a common plane. By Lemma 3.4.1 of \cite{gkm}, we have a 
contradiction to the fact that $UT$ is elliptic.  Hence, $X$ must be a $\pi$--rotation. Repeating the same 
argument for other presentations of $\pi_1(\Sigma_{1,2})$, we are thus reduced to the case where the monodromy 
trace of every nonseparating simple loop on $\Sigma_{1,2}$ under $\rho$ is $0$.

We claim that such $\rho$ must be dihedral up to conjugacy. Indeed, let $(a_1,\,b_1,\,c_1,\,c_2)$ be an optimal 
sequence of generators for $\pi_1(\Sigma_{1,2})$. Since
$$(\tr\rho(a_1),\,\tr\rho(b_1),\,\tr\rho(a_1b_1))\,=\,(0,\,0,\,0)$$
which determines the restriction of $\rho$ to $\langle a_1,\,b_1\rangle$ up to conjugacy, up to global
conjugation we may assume that
$$\rho(a_1)\,=\,\begin{bmatrix}i & 0 \\ 0 & -i\end{bmatrix}\quad\text{and}\quad\rho(b_1)=\begin{bmatrix}0 & 1\\ -1 & 0\end{bmatrix}.$$
Writing
$$\rho(c_1)\,=\,\begin{bmatrix}x & y\\ z & w\end{bmatrix},$$
we have
$$\rho(b_1c_1)\,=\,\begin{bmatrix}
z & w\\ -x & -y
\end{bmatrix}\quad\text{and}\quad
\rho(a_1b_1c_1)\,=\,\begin{bmatrix}
iz & iw\\ ix & iy
\end{bmatrix}.
$$
Since $\tr\rho(b_1c_2)\,=\,\tr\rho(a_1b_1c_1)\,=\,0$, the above expressions show that $y
\,=\,z\,=\,0$, i.e.,~$\rho(c_1)$ is diagonal. This shows that $\rho$ is dihedral up to conjugacy as desired. So
the hypothesis of the Proposition implies that $\rho$ is unitarizable, a contradiction.
\end{proof}

\subsection{Proof of Theorem A}\label{sec:5.3}
We restate and complete the proof of Theorem A.

\begin{theorema}
Let $\Sigma$ be an oriented surface of genus $g\,\geq\,2$ with $n\,\geq\,0$ punctures. A semisimple
representation $\rho\,:\,\pi_1(\Sigma)\,\to\,\slc$ has finite mapping class group orbit in the character
variety $X(\Sigma)$ if and only if $\rho$ is finite.
\end{theorema}

\begin{proof}
Suppose that $\rho\,:\,\pi_1(\Sigma)\,\to\,\SL_2(\C)$ is a semisimple representation with finite mapping class
group orbit in $X(\Sigma)$. By Lemma \ref{redlemm}, we are done if $\rho$ is reducible, so we may assume
that $\rho$ is irreducible. It suffices to show that $\rho$ has finite monodromy along every simple
closed curve around a puncture of $\Sigma$, for then we reduce to Proposition \ref{simple-finite}. By
Lemmas 4.4 and 4.5, we see that $\rho$ must have finite monodromy along any essential curve $a\,\subset\,
\Sigma$ which is either:
\begin{itemize}
	\item nonseparating, or
	\item separating with each component of $\Sigma|a\,=\,\Sigma_1\sqcup\Sigma_2$ having positive genus.
\end{itemize}
By Proposition \ref{red}, there is a one-holed torus subsurface $\Sigma'\,\subset\,\Sigma$ such that
$\rho|\Sigma'$ is irreducible. Let $c$ be a simple closed curve around a puncture of $\Sigma$. Let $\Sigma''
\,\subset\,\Sigma$ be a two-holed torus containing $\Sigma'$ and having $c$ as one of its boundary
curves; let $c'$ be the other boundary curve of $\Sigma''$. (Note here that each component of $\Sigma|c'$ has
positive genus, by design.) We shall prove that $\rho|\Sigma''$ is finite, so \emph{a fortiori} $\rho$ has
finite monodromy along $c$. For this, we follow the strategy of \cite{psw} below.

First, we shall show that the restriction of $\rho$ to $\Sigma''$ is unitarizable. This follows from 
Proposition \ref{prop-s12}, but for the benefit of the reader we also give another proof. First, we know from 
above that $\rho$ has finite monodromy along every essential curve of $\Sigma''$, as well as along the boundary 
curve $c'$. In particular, the restriction of $\rho$ to the one-holed torus $\Sigma'$ has image that is 
conjugate to a subgroup of $\SU(2)$. Let $A\,\subset\,\R$ be the $\Z$-algebra generated by the set of traces of 
$\rho$ along the essential curves of $\Sigma''$. By considering the preferred generators for $\pi_1(\Sigma'')$ 
introduced in Section \ref{sec-g1n2}, it follows from the trace relations given in Example \ref{exfree} 
(cf.~\cite[Section 5.3]{goldman2}) that $\tr\rho(c)$ satisfies a monic quadratic equation over the ring $A$, 
with the other root being $\tr\rho(c')$. Since $\tr\rho(c')\in\R$, it follows that $\tr\rho(c_1)\,\in\,\R$ as well. 
Applying Fact \ref{fact} to a sequence of optimal generators for $\pi_1(\Sigma'')$, we deduce that the 
character of $\rho|\Sigma''$ is real, and since $\rho|\Sigma''$ is semisimple its image is conjugate to a 
subgroup of $\SU(2)$ or $\SL_2(\R)$ (see e.g.~\cite[Proposition III.1.1]{ms}). The latter cannot occur, since 
otherwise the restriction of $\rho$ to $\Sigma'$ has image conjugate to a subgroup of $\SO(2)$, contradicting 
the fact that $\rho|\Sigma'$ is irreducible hence nonabelian. Thus, the restriction of $\rho$ to $\Sigma''$ has 
image conjugate to a subgroup of $\SU(2)$.

It also follows from the above analysis that the character of $\rho|\Sigma''$ takes values in the ring of 
algebraic integers in $\overline{\Q}$. In particular, we may assume without loss of generality that the image 
of $\rho|\Sigma''$ lies in $\SL_2(\overline{\Q})$. By considering conjugates of $\rho|\Sigma''$ by elements of 
the absolute Galois group $\Gal(\overline{\Q}/\Q)$ of $\Q$ and noting that the above analysis goes through for 
all of these conjugates, we conclude that the eigenvalues of monodromy of $\rho$ along $c$ are algebraic 
integers all of whose Galois conjugates have absolute value $1$. By Kronecker's theorem, it follows that the 
eigenvalues are roots of unity, i.e.,~$\rho$ has finite monodromy along $c$ (note that monodromy along $\rho$ 
cannot be unipotent). This is the desired result.
\end{proof}

\subsection{Proof of Theorem B}\label{sec:5.4}
We restate and complete the proof of Theorem B.

\begin{theoremb}
Let $\Sigma$ be an oriented surface of genus $1$ with $n\,\geq\,0$ punctures. A semisimple representation
$\rho\,:\,\pi_1(\Sigma)\,\to\,\SL_2(\C)$ has finite mapping class group orbit in the character variety
$X(\Sigma)$ if and only if $\rho$ is finite or special dihedral.
\end{theoremb}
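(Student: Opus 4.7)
The plan is to parallel the proof of Theorem A above, with the critical adjustment that in genus one there are no separating essential curves whose complement has both components of positive genus, so Lemma \ref{type2} is vacuous and must be replaced by the unitarizability results of Section \ref{sec-g1n2}. The ``if'' direction is immediate: finite representations trivially have finite orbits, and special dihedral representations do so by Lemma \ref{dilem}. For the ``only if'' direction, I first reduce to the Zariski-dense case by invoking Proposition \ref{g1prop}, which already classifies non-Zariski-dense finite-orbit representations as either finite or special dihedral up to conjugacy. Assuming $\rho$ is Zariski dense, condition $(*)$ of Section \ref{sect:4} holds, and Lemma \ref{type1} provides finite monodromy along every nonseparating essential curve in $\Sigma$. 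By Proposition \ref{simple-finite}, it suffices to establish finite monodromy around each puncture, for this would force $\rho$ to be finite and contradict Zariski density; the case $n=0$ is already vacuous since $\pi_1(\Sigma) \cong \Z^2$ admits no Zariski-dense representation.

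The main tool is a Galois-theoretic argument mirroring the closing steps of the proof of Theorem A. Since $\rho$ has finite mapping class group orbit, its character is defined over $\bar\Q$, and for each $\sigma \in \Gal(\bar\Q/\Q)$ the Galois conjugate $\rho^\sigma$ also has finite mapping class group orbit (the mapping class group acts by $\Z$-polynomial transformations, so commutes with Galois). By Galois-invariance of $2\cos(2\pi\Q) \subset [-2,2]$, every $\rho^\sigma$ has elliptic or central monodromy along every nonseparating simple loop. For a fixed puncture $c$, the goal is to show that $\tr\rho^\sigma(c) \in [-2,2]$ for all $\sigma$ and that $\tr\rho(c) \in \bar\Z$, whereupon Kronecker's theorem forces $\tr\rho(c) = \lambda + \lambda^{-1}$ for a root of unity $\lambda$ and $\rho(c)$ has finite order. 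When $n=1$, $\Sigma$ is itself a one-holed torus with boundary $c = [a,b]$: Lemma \ref{lem-s11} makes each $\rho^\sigma$ unitarizable, so $\tr\rho^\sigma(c) \in [-2,2]$, and the Fricke commutator identity $\tr[a,b] = x^2 + y^2 + z^2 - xyz - 2$ exhibits $\tr\rho(c)$ as a $\Z$-polynomial in the algebraic integers $\tr\rho(a), \tr\rho(b), \tr\rho(ab)$. When $n \geq 2$, I use Proposition \ref{red} to choose a one-holed torus $\Sigma' \subset \Sigma$ with $\rho|\Sigma'$ irreducible, and enlarge it to a two-holed torus $\Sigma'' \cong \Sigma_{1,2}$ in $\Sigma$ having $c$ as one boundary and some separating curve $c'$ as the other. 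Since the inclusion induces an injection $H_1(\Sigma''; \Z) \hookrightarrow H_1(\Sigma; \Z)$, nonseparating simple curves of $\Sigma''$ remain nonseparating in $\Sigma$; hence $\rho^\sigma|\Sigma''$ (after semisimplification) satisfies the hypothesis of Proposition \ref{prop-s12} and is unitarizable, giving $\tr\rho^\sigma(c) \in [-2,2]$. Integrality of $\tr\rho(c)$ is then proven exactly as in the proof of Theorem A from the trace identity of Example \ref{exfree}(3): using the preferred generators $u,x,y$ of $\pi_1(\Sigma'')$, both $\tr\rho(c) = \tr\rho(uxy)$ and $\tr\rho(c') = \tr\rho(uyx)$ are roots of a monic quadratic equation over the $\Z$-algebra generated by $\tr\rho(u), \tr\rho(x), \tr\rho(y), \tr\rho(ux), \tr\rho(uy), \tr\rho(xy)$, each an algebraic integer since the corresponding curve is nonseparating simple in $\Sigma''$ and hence in $\Sigma$.

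The main obstacle is the $n \geq 2$ case: the absence of separating curves of type II in genus one prevents a direct invocation of Lemma \ref{type2} to force finite monodromy along the other boundary $c'$ of $\Sigma''$, so the version of the Fricke--Galois argument used in Theorem A (where $\tr\rho(c') \in \R$ comes for free from finite monodromy along $c'$) does not port over. Proposition \ref{prop-s12} is the precise substitute, but applying it requires the verification---via the homological injectivity mentioned above---that nonseparating simple loops of the subsurface $\Sigma''$ remain nonseparating in the ambient $\Sigma$. This geometric observation is the key link that enables the Galois transfer, after which Kronecker's theorem combined with Proposition \ref{simple-finite} closes out the contradiction.
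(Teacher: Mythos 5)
Your argument is correct and follows essentially the same route as the paper: the paper's proof of Theorem B deduces unitarizability of $\rho$ and of its Galois conjugates from Theorem C (whose genus-one case is proved in the paper exactly via Proposition \ref{red} and Proposition \ref{prop-s12} applied to a two-holed torus neighbourhood of the puncture, as you do), obtains integrality of $\tr\rho(c)$ from the trace relations, and concludes with Kronecker's theorem and Proposition \ref{simple-finite}. One small caveat: injectivity of $H_1(\Sigma'';\Z)\to H_1(\Sigma;\Z)$ is not by itself the right reason a nonseparating curve of $\Sigma''$ stays nonseparating in $\Sigma$ (a boundary-parallel curve is homologically nontrivial yet disconnects); the correct and equally easy justification is that a nonseparating simple closed curve in $\Sigma''$ admits a dual simple closed curve meeting it exactly once, and this dual curve persists in $\Sigma$.
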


\begin{proof}
Our proof proceeds as in the proof Theorem A, with minor modifications. Suppose that $\rho\,:\, 
\pi_1(\Sigma)\,\to\,\SL_2(\C)$ is a semisimple representation with finite mapping class group orbit in $X(\Sigma)$. 
By Lemma \ref{redlemm}, we are done if $\rho$ is reducible, so we may assume that $\rho$ is irreducible. We may 
also assume that $\rho$ is not special dihedral. It suffices to show that $\rho$ has finite monodromy along 
every simple closed curve around a puncture of $\Sigma$, for then we reduce to Proposition \ref{simple-finite}.

By Lemma \ref{type1}, $\rho$ has finite monodromy along any nonseparating essential curve $a\,\subset\,\Sigma$. 
Given any $c$ which is a simple closed curve around a puncture of $\Sigma$ or a separating essential curve on $\Sigma$, there is a two-holed torus subsurface of 
$\Sigma$ having $c$ as one of its boundary components. By the trace relations in Example \ref{exfree} 
(cf.~\cite[Section 5.3]{goldman2}), we see that $\tr\rho(c)$ is an algebraic integer. It follows that $\tr\rho(a)$ is an algebraic integer for every simple closed curve $a$ on $\Sigma$. This easily implies (see e.g.~\cite[Section 2.1]{psw}) that $\tr\rho(\alpha)$ is an algebraic integer for every $\alpha\in\pi_1(\Sigma)$. We may in particular 
assume that $\rho$ is a representation of $\pi_1(\Sigma)$ into $\SL_2( \overline{\Q})$.

Now, $\rho$ is unitarizable by Theorem C proved below, and in particular the monodromy eigenvalues of $\rho$ 
along any curve $c$ around a puncture of $\Sigma$ have absolute value $1$. Applying this observation to every 
conjugate of $\rho$ by an element of the absolute Galois group $\Gal(\overline{\Q}/\Q)$, we see that the 
eigenvalues of $\rho(c)$ for any curve around a puncture of $\Sigma$ are algebraic integers all of whose 
conjugates have absolute value $1$. By Kronecker's theorem, it follows that the eigenvalues are roots of unity, 
i.e.,~$\rho$ has finite monodromy along $c$ (note that monodromy along $\rho$ cannot be unipotent). This is the 
desired result.
\end{proof}

\subsection{Proof of Theorem C}\label{sec:5.5}
We restate and complete the proof of Theorem C.

\begin{theoremc}
Let $\Sigma$ be an oriented surface of genus $g\,\geq\,1$ with $n\,\geq\,0$ punctures. A semisimple representation
$\rho\,:\,\pi_1(\Sigma)\,\to\,\slc$ has bounded mapping class group orbit in the character variety $X(\Sigma)$ if
and only if:
\begin{enumerate}
\item $\rho$ is unitary up to conjugacy, or

\item $g\,=\,1$ and $\rho$ is special dihedral up to conjugacy.
\end{enumerate} 
\end{theoremc}

\begin{proof}
In the case where $\Sigma$ is a surface of genus $g\geq2$, a minor modification of the argument in the proof of 
Theorem A proves that $\rho$ has elliptic or central local monodromy around the punctures. Proposition 
\ref{simple-finite} then shows that $\rho$ is unitarizable, as desired.

Let us now assume that $\Sigma$ has genus $1$ with $n\,\geq\,0$ punctures. We know the case $n\,\leq\, 2$ of Theorem C 
by our work in Section \ref{sec-g1n2}; we shall deduce the general case from it. So suppose we have a 
semisimple representation $\rho\,:\,\pi_1(\Sigma)\,\to\,\SL_2(\C)$ whose monodromy is central or elliptic along every 
nonseparating curve on $\Sigma$. It is clear that $\rho$ is unitarizable if it is moreover reducible, so let us 
assume that $\rho$ is irreducible in what follows.

By Proposition \ref{red}, there is a one-holed torus subsurface $\Sigma'\,\subset\,\Sigma$ such that $\rho|\Sigma'$ 
is irreducible. For any two-holed torus $\Sigma''\,\subset\,\Sigma$ containing $\Sigma'$, Proposition 
\ref{prop-s12} shows that $\rho|\Sigma''$ is unitarizable. (\emph{A fortiori}, $\rho|\Sigma'$ is unitarizable.) In particular the monodromy of $\rho$ along each of the two boundary curves of such $\Sigma''$ is central or elliptic; and hence has real trace. 
By considering an optimal sequence of generators for $\pi_1(\Sigma')$ and extending it to an optimal sequence of generators for $\pi_1(\Sigma)$, and applying Fact \ref{fact}, we see that 
the coordinate ring of the character variety $X(\Sigma)$ is generated by the monodromy traces $\tr_a$ around a 
finite collection of curves $a$ each of which is either a nonseparating curve, the boundary curve of the one-holed torus $\Sigma'$, or a boundary curve of one of the two-holed tori $\Sigma''$ mentioned above. In particular, it follows that $\rho$ has real character, and therefore the image of $\rho$ is 
conjugate to a subgroup of $\SU(2)$ or $\SL_2(\R)$. We claim that the latter cannot occur. Indeed, if the 
image of $\rho$ lies in $\SL_2(\R)$, then unitarizability of $\rho|\Sigma'$ implies that the image of $\rho|\Sigma'$ is conjugate 
to a subgroup of $\SO(2)$, contradicting the irreducibility of $\rho|\Sigma'$. Thus, the image of $\rho$ must 
be conjugate to a subgroup of $\SU(2)$, as desired.
\end{proof}

\subsection{Alternative proof of Theorem A for closed surfaces}\label{sec:5.6}

We give a different short proof of Theorem A in the case where $\Sigma$ is a closed surface of genus $g\,\geq\,2$, 
using the following result of Gallo--Kapovich--Marden.

\begin{theorem}[{\cite[Section 3]{gkm}}]\label{gkm} Let $\S$ be a closed oriented surface of genus greater than one.
If  $\rho\,:\,\pi_1(\Sigma)\,\to\,\SL_2(\C)$ is non-elementary, then there exist simple loops $a,\, b$ on $\S$
such that 
\begin{enumerate}
\item the intersection number $i(a,\,b) \,=\, 1$,

\item The images $\rho(a), \,\rho(b)\,\in \,\PSL_2(\C)$ are loxodromic and distinct, and generate a Schottky group.
	\end{enumerate}
\end{theorem}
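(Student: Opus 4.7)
The proof plan has three stages: (1) produce a simple loop $a$ with $\rho(a)$ loxodromic; (2) produce a simple partner $b$ with $i(a,b)=1$ such that $\rho(b)$ is loxodromic and the fixed-point sets of $\rho(a), \rho(b)$ in $\partial\H^3$ are disjoint; (3) modify the pair by mapping class group elements so that $\rho(a),\rho(b)$ satisfy a ping-pong configuration.

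For stage (1), I would begin with an arbitrary pair $(a_0,b_0)$ of simple loops with $i(a_0,b_0)=1$ (such pairs exist on any surface of genus $\geq 1$, bounding a one-holed torus subsurface). Since $\rho$ is non-elementary, Proposition \ref{red} lets us arrange matters so that $\rho$ restricted to some one-holed torus subsurface $\Sigma(a_0,b_0)$ is irreducible, and in particular neither $\rho(a_0)$ nor $\rho(b_0)$ is $\pm I$. If $\rho(a_0)$ is already loxodromic, done; otherwise consider the Dehn-twist family $\tw_{b_0}^k(a_0)$, represented in $\pi_1$ by $b_0^k a_0$. These are all simple loops with intersection one against $b_0$. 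Writing $\rho(b_0)$ in eigenvalue form, the trace $\tr\rho(b_0^k a_0)$ is a non-constant exponential polynomial in $k$, so it leaves the interval $[-2,2]$ for some $k$, yielding a loxodromic simple loop which we rename $a$.

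For stage (2), I apply the same idea with roles reversed: choose $b$ with $i(a,b)=1$, and examine the family $\tw_a^k(b)=a^k b$. By the same trace argument (now with $\rho(a)$ loxodromic, so the two eigenvalues are $\lambda,\lambda^{-1}$ with $|\lambda|\neq 1$), $\tr\rho(a^k b)$ becomes non-elliptic for most $k$. To obtain disjoint fixed-point sets in $\partial\H^3$, observe that a shared fixed point for $\rho(a)$ and $\rho(a^k b)$ would force $\rho(b)$ to fix that point, making $\rho$ reducible on the one-holed torus $\Sigma(a,b)$; by the irreducibility obtained from Proposition \ref{red} (combined with the non-elementary hypothesis), we may avoid this after possibly changing the initial subsurface.

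Stage (3) is the main obstacle and carries the bulk of the geometric content. Loxodromics with disjoint fixed points do not automatically generate a Schottky group -- one needs the translation lengths to dominate the cross-ratio geometry of the four fixed points on $\partial\H^3$ so that Klein--Maskit ping-pong applies. Taking high powers would suffice but destroys simplicity. The correct move is to iterate Dehn twists $(a,b)\leadsto(\tw_b^N(a),b)$ and $(a,b)\leadsto(a,\tw_a^M(b))$; these preserve simplicity and $i(a,b)=1$, while under $\rho$ they drive the axes and translation lengths into a configuration dominated by the dynamics of high powers of the other generator. The key lemma to establish, and the hardest step, is that after such twisting the four attracting/repelling fixed points can be separated into two disjoint arcs on $\partial\H^3$ bounding disjoint round disks that are interchanged by $\rho(a)^{\pm 1}, \rho(b)^{\pm 1}$ respectively -- the standard Schottky data. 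One argues this by tracking how the fixed points of $\rho(\tw_b^N(a))$ converge to those of $\rho(b)^{\pm 1}$ as $|N|\to\infty$ and invoking a continuity/transversality argument in $(\partial\H^3)^4$; the non-elementarity is used here to rule out degenerate limiting configurations.
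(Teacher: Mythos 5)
The paper does not prove this statement at all: it is imported verbatim from Gallo--Kapovich--Marden \cite[Section 3]{gkm}, where producing such a ``handle'' occupies an extended case analysis. Measured against that, your outline has a genuine gap already at stage (1). The claim that $\tr\rho(b_0^k a_0)=\lambda^k m_{11}+\lambda^{-k}m_{22}$ is a non-constant exponential polynomial and therefore leaves $[-2,2]$ is false when $\rho(b_0)$ is elliptic: then $|\lambda|=1$, the sequence is bounded, and it can remain in $[-2,2]$ for every $k$ --- as happens for \emph{every} pair of simple loops whenever the restriction of $\rho$ to the one-holed torus $\Sigma(a_0,b_0)$ is conjugate into $\SU(2)$, which is perfectly compatible with $\rho$ being globally non-elementary. (If $\rho(b_0)$ is parabolic the trace is affine in $k$ and its linear coefficient can vanish; only when $\rho(b_0)$ is already loxodromic does the argument run unconditionally.) Proposition \ref{red} does not repair this: it supplies a handle on which $\rho$ is \emph{irreducible}, but irreducible unitary and irreducible dihedral restrictions both defeat the trace argument --- in the dihedral configuration, where $\rho(a)$ is diagonal and $\rho(b)$ has both diagonal entries zero, one gets $\tr\rho(a^k b)=0$ for all $k$. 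Extracting even one simple loop with loxodromic image from non-elementarity is precisely the hard content of \cite[Section 3]{gkm}; it cannot be obtained by twisting inside a single handle.

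Stage (2) inherits the same defect: irreducibility of the handle rules out a \emph{common} fixed point, but not the dihedral configuration just described, which is irreducible and must be excluded by a separate argument using non-elementarity of the full representation (not merely of a restriction). Stage (3) you explicitly leave as an unproved ``key lemma''; that is where the Schottky/ping-pong content actually lives, and the difficulty you correctly identify --- that passing to high powers destroys simplicity --- is the reason the argument in \cite{gkm} is delicate. In short, the two essential ingredients, namely producing a loxodromic simple handle from non-elementarity and upgrading a loxodromic handle to a Schottky one while preserving simplicity and $i(a,b)=1$, are respectively flawed and deferred, so the proposal is an outline rather than a proof.
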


\begin{theorem}\label{rgg} Let $\S$ be a closed orientable surface of genus $g\,\geq\,2$.
Given a semisimple representation $\pi_1(\Sigma)\,\to\,\SL_2(\C)$, the orbit
of $\rho$ in $X(\Sigma)$, under the action of $\Mod(\Sigma)$, is finite if and only if the image
of $\rho$ is a finite group.
\end{theorem}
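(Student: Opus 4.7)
The ``if'' direction is immediate, so the plan is to fix a semisimple $\rho:\pi_1(\Sigma)\to\SL_2(\C)$ with finite $\Mod(\Sigma)$-orbit in $X(\Sigma)$ and show that $\rho$ has finite image. I would split into cases according to whether $\rho$ is elementary.

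First, suppose $\rho$ is elementary, so that $\rho$ is reducible, unitarizable, or dihedral. In each of these cases the image lies in a proper algebraic subgroup of $\SL_2(\C)$ (the standard Borel, a conjugate of $\SU(2)$, or $D_\infty$; see Section \ref{sect:2.1}), and in particular $\rho$ is not Zariski dense. Since $g\geq 2$, the special dihedral alternative in Proposition \ref{g1prop} is excluded by its definition, so Proposition \ref{g1prop} immediately yields that $\rho$ is finite, which settles this case.

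Next, suppose $\rho$ is non-elementary. Then I would apply Theorem \ref{gkm} to produce simple loops $a,b$ on $\Sigma$ with $i(a,b)=1$ and $\rho(a),\rho(b)$ loxodromic (hence of infinite order in $\SL_2(\C)$). Since $i(a,b)=1$ forces $a$ to be a nonseparating essential curve, $a$ is of Type I in the terminology of Section \ref{sect:4}. Because $g\geq 2$, the semisimple representation $\rho$ automatically satisfies hypothesis $(*)$, so Lemma \ref{type1} applies and yields that $\rho(a)$ has finite order in $\SL_2(\C)$. This contradicts the fact that $\rho(a)$ is loxodromic, so the non-elementary case cannot occur and we are done.

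There is no real obstacle: the entire argument is a clean assembly of the two tools available just above the statement, namely the Gallo--Kapovich--Marden theorem (Theorem \ref{gkm}) producing a simple loxodromic loop paired with a simple loop intersecting it once, and the Dehn twist analysis of Lemma \ref{type1} forcing monodromy along nonseparating essential curves to be torsion whenever the mapping class orbit is finite. The only point deserving care is the verification that ``non-elementary'' rules out all proper algebraic subgroups of $\SL_2(\C)$ (so that Theorem \ref{gkm} is applicable), which follows from the classification recalled in Section \ref{sect:2.1} together with the observation that the exceptional finite groups $BA_4$, $BS_4$, $BA_5$ are conjugate into $\SU(2)$ and hence unitary.
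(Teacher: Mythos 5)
Your non-elementary case is correct and is essentially the paper's argument: the paper shows directly that the traces of $\rho(\tw_a^n(b))=\rho(a)^n\rho(b)$ are unbounded because $\rho(a)$ is loxodromic, while you reach the same contradiction by citing Lemma \ref{type1}; either route is fine. The problem is in your elementary case. You assert that a unitarizable representation has image in a proper \emph{algebraic} subgroup of $\SL_2(\C)$, namely ``a conjugate of $\SU(2)$,'' and conclude that $\rho$ is not Zariski dense so that Proposition \ref{g1prop} applies. But $\SU(2)$ is not an algebraic subgroup of $\SL_2(\C)$: it is a compact real form, and its Zariski closure is all of $\SL_2(\C)$ (the complex span of $\mathfrak{su}(2)$ is $\mathfrak{sl}_2(\C)$); this is why $\SU(2)$ does not appear in the classification of proper algebraic subgroups recalled in Section \ref{sect:2.1}. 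Consequently, a semisimple $\rho$ whose image is an infinite, irreducible, non-dihedral subgroup of a conjugate of $\SU(2)$ --- equivalently, dense in $\SU(2)$ in the Euclidean topology --- is elementary in the paper's terminology yet Zariski dense, and Proposition \ref{g1prop} says nothing about it. Your argument therefore never addresses this subcase, and it is precisely the one the paper handles with the Previte--Xia density theorem \cite{px}: for such $\rho$ the $\Mod(\Sigma)$-orbit is dense in $\Hom(\pi_1(\Sigma),\SU(2))/\SU(2)$, hence infinite.

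To repair the proof you must either quote \cite{px} for the dense-in-$\SU(2)$ case, or bypass Proposition \ref{g1prop} altogether: since $\Sigma$ is closed of genus $g\geq2$, every essential simple closed curve is of Type I or Type II, so Lemmas \ref{type1} and \ref{type2} give finite monodromy along every simple loop, and Theorem \ref{psw} then yields finiteness of the image without any case division on the Zariski closure. As written, the elementary case has a genuine gap.
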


\begin{proof}
Suppose first that $\rho$ is non-elementary. By Theorem \ref{gkm}, there exist simple loops $a,\, b$ on $\S$
with $i(a,\,b)\, =\, 1$ such that $\rho(a), \,\rho(b)
\,\in\, \PSL_2(\C)$ are loxodromic and distinct. Since $\rho(a),\, \rho(b)\,\in\, \PSL_2(\C)$ are loxodromic,
$\tw_a^n (b)$ gives an infinite sequence of curves in $\Sigma$ with $\rho$-images $\rho(a)^n \rho(b)$ whose
translation length in $\HHH^3$ tends to infinity as $n\,\to\, \infty$, while $\tw_a^n (a)$ remains fixed. It
follows that the $\tw_a^n$-orbit of $\rho$ is infinite in $X(\Sigma)$ and hence so is the $\Mod(\Sigma)$-orbit.

Suppose that $\rho$ is elementary. In view of the results in Section \ref{sect:3}, it remains only to treat the 
case where $\rho$ is a representation whose image is a dense subgroup of $\SU(2)$ in the Euclidean topology. In 
this case, the main theorem of \cite{px} states that the $\Mod(\Sigma)$-orbit of $\rho$ is dense in 
$\Hom(\pi_1(\Sigma),\,\SU(2))/\SU(2)$ in the Euclidean topology. Hence it is infinite.
\end{proof}

\section{Applications}\label{sect:6}

In this section, we collect applications of our results and methods developed in the previous sections. The 
following immediate corollary of Theorem A answers a question that was posed to us by Lubotzky:

\begin{corollary}\label{faithful}
Given a surface $\Sigma$ of genus $g\,\geq\,1$ with $n\geq0$ punctures, a faithful representation
$\rho\,:\,\pi_1(\Sigma)\,\to\, \SL_2(\C)$ (or $\PSL_2(\C)$) cannot have finite $\Mod(\Sigma)$-orbit in the
character variety.
\end{corollary}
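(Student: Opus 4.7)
The plan is to deduce the corollary directly from Theorems A and B, with a small amount of elementary group theory to handle the special dihedral case. Since $\Sigma$ has positive genus, $\pi_1(\Sigma)$ is infinite, so any faithful $\rho:\pi_1(\Sigma)\to\SL_2(\C)$ has infinite image and is in particular not a finite representation. Theorem A therefore immediately disposes of the case $g\geq 2$: finite $\Mod(\Sigma)$-orbit forces $\rho$ to be finite, contradicting faithfulness.

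For $g = 1$, Theorem B leaves open the possibility that $\rho$ is special dihedral, so what remains is to rule out faithful special dihedral representations. If $n = 0$, then $\pi_1(\Sigma)\simeq\Z^2$ is abelian, and any semisimple two-dimensional representation of an abelian group is conjugate to a diagonal one, hence reducible, hence not special dihedral (which by definition requires irreducibility). If $n \geq 1$, then $\pi_1(\Sigma)$ is free of rank $2g + n - 1 = n+1 \geq 2$; a special dihedral representation has image inside $D_\infty$, which is metabelian via the extension $1\to\C^*\to D_\infty\to\Z/2\Z\to 1$ and hence solvable. Since a non-abelian free group cannot embed in any solvable group, no faithful special dihedral representation exists in this case either.

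For the $\PSL_2(\C)$ variant, the plan is to reduce to the $\SL_2(\C)$ case by passing to a finite cover $\widetilde\Sigma\to\Sigma$ on which the given representation lifts to $\SL_2(\C)$. The obstruction to such a lift is a cohomology class in $H^2(\pi_1(\Sigma),\Z/2\Z)$, which is killed by a suitable double cover (which still has positive genus). The lifted representation remains faithful, since its kernel is contained in the kernel of the original $\PSL_2(\C)$-representation restricted to the covering subgroup. A finite $\Mod(\Sigma)$-orbit in the $\PSL_2(\C)$-character variety of $\Sigma$ then descends to a finite orbit of the lift under a finite-index subgroup of $\Mod(\widetilde\Sigma)$ acting on $X(\widetilde\Sigma)$, and Theorems A and B applied to $\widetilde\Sigma$ yield the required contradiction.

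The hardest step conceptually is the $\PSL_2(\C)$ reduction, where one must verify compatibility of the mapping class group actions across the finite cover and ensure that the passage to a finite-index subgroup of $\Mod(\widetilde\Sigma)$ preserves the finite-orbit condition. Both are standard covering-space considerations and cause no genuine difficulty; the remainder of the argument is immediate from the classification already established.
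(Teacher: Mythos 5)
Your $\SL_2(\C)$ argument is correct and is essentially the paper's proof (which is a two-sentence deduction from Theorems A and B); you usefully supply the details the paper leaves implicit, namely that a faithful representation has infinite image, and that a faithful special dihedral representation is impossible because $D_\infty$ is solvable while $\pi_1(\Sigma_{1,n})$ is a nonabelian free group for $n\geq1$ (and for $n=0$ the group $\Z^2$ admits no irreducible semisimple representation). That part stands.

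The $\PSL_2(\C)$ reduction, however, has a genuine gap in the closed-surface case. When $n\geq1$ the fundamental group is free, the obstruction in $H^2(\pi_1(\Sigma),\Z/2)$ vanishes, and no cover is needed: the lift exists on $\Sigma$ itself, is faithful, and has finite $\Mod(\Sigma)$-orbit because $X_{\SL_2}(\Sigma)\to X_{\PSL_2}(\Sigma)$ is $\Mod(\Sigma)$-equivariant with finite fibers (a torsor under the finite group $\Hom(\pi_1(\Sigma),\Z/2)$). But when $\Sigma$ is closed and the obstruction is nonzero, your passage to a double cover $\widetilde\Sigma\to\Sigma$ does not deliver what you claim: only the finite-index subgroup of $\Mod(\Sigma)$ preserving $\pi_1(\widetilde\Sigma)$ lifts to $\widetilde\Sigma$, and its image in $\Mod(\widetilde\Sigma)$ is of \emph{infinite} index (it consists of symmetric mapping classes commuting with the deck involution, and in particular misses Dehn twists along curves of $\widetilde\Sigma$ not arising from $\Sigma$). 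So you obtain finiteness of the orbit of the lifted representation only under an infinite-index subgroup of $\Mod(\widetilde\Sigma)$, which is not enough to invoke Theorems A and B on $\widetilde\Sigma$. To repair the closed case one can instead argue directly in $\PSL_2(\C)$ as in Section \ref{sec:5.6}: a faithful representation of a closed surface group of genus $\geq2$ is either non-elementary, in which case Theorem \ref{gkm} produces loxodromic $\rho(a),\rho(b)$ with $i(a,b)=1$ and the twist orbit $\rho(a)^k\rho(b)$ has unbounded translation lengths, hence the orbit is infinite; or elementary, in which case reducible/dihedral images are virtually solvable and cannot contain the surface group, while a faithful unitary representation has dense image in $\PSU(2)$ and the orbit is infinite by \cite{px} (and for the closed torus every faithful abelian representation lifts, so the $\SL_2$ case applies).
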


\begin{proof}
It follows from Theorems A and B that if $\rho\,:\,\pi_1(\Sigma)\,\to\, \SL_2(\C)$  has a finite
$\Mod(\Sigma)$-orbit in $X(\Sigma)$, then $\rho$ cannot be faithful. The same holds when $\SL_2(\C)$ is
replaced by $\PSL_2(\C)$.
\end{proof}

Let $\Sigma$ be a closed surface of genus greater than one; we fix a a hyperbolic metric on $\Sigma$ such that 
$\Sigma\,=\, \HHH^2/\Gamma$ where $\Gamma$ is a Fuchsian group. Recall that for any semisimple representation 
$\rho\,:\,\Gamma\,\to\, \PSL_2(\C)$, there exists a $\rho$-equivariant harmonic map $\widetilde{h}
\,:\, \HHH^2\,\to\, \HHH^3$ from 
the universal cover of $\Sigma$ to the symmetric space for $\PSL_2(\C)$ (see, for example, \cite{Donaldson}). 
The \textit{equivariant energy} of this harmonic map is the energy of its restriction to a fundamental domain 
of the $\Gamma$-action on $\HHH^2$. The following answers a question due to Goldman.

\begin{theorem}\label{energy}
Fix a Riemann surface $\S$ of genus greater than one with $\Gamma \,=\, \pi_1(\S)$, and let $\rho
\,:\,\Gamma \,\to\, \PSL_2(\C)$ be an semisimple representation. Suppose that the equivariant energies
of the harmonic maps corresponding to the mapping class group orbit of $\rho$ in
$\Hom(\pi_1(\Sigma),\,\PSL_2(\C))\git\PSL_2(\C)$ is uniformly bounded. Then $\rho(\Gamma)$ fixes a point
of $\HHH^3$; in particular $\rho(\Gamma)$ can be conjugated to lie in $\PSU(2)$.
\end{theorem}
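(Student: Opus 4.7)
The plan is to reduce to Theorem C by establishing that the $\Mod(\Sigma)$-orbit of $\rho$ is bounded in the $\PSL_2(\C)$-character variety $X(\Sigma)$. The key input from harmonic map theory is the following classical estimate: for any $\rho$-equivariant harmonic map $\tilde h \colon \HHH^2 \to \HHH^3$ and any simple closed geodesic $\gamma$ on $\Sigma$,
\[
T(\rho(\gamma))^2 \leq C_\gamma \cdot E(\tilde h),
\]
where $T$ denotes translation length in $\HHH^3$ and $C_\gamma>0$ depends only on $\gamma$ and the fixed hyperbolic structure on $\Sigma$. This inequality follows from applying Cauchy--Schwarz to $h|_{\gamma_t}$, where $\gamma_t$ ranges over a family of parallel curves in a standard collar of $\gamma$, and using that $h\circ\gamma_t$ is freely homotopic in $\HHH^3/\rho(\Gamma)$ to the closed geodesic of translation length $T(\rho(\gamma))$, so that its length bounds $T(\rho(\gamma))$ from above.

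Now apply the estimate inside the $\Mod(\Sigma)$-orbit: by hypothesis there is a uniform bound $E(\tilde h_{\phi\cdot\rho}) \leq M$ for all $\phi\in\Mod(\Sigma)$. Fixing any simple closed curve $\gamma\subset\Sigma$, we obtain
\[
T\bigl(\rho(\phi_*^{-1}(\gamma))\bigr)^2 = T\bigl((\phi\cdot\rho)(\gamma)\bigr)^2 \leq C_\gamma M.
\]
As $\phi$ varies, $\phi_*^{-1}(\gamma)$ covers the full $\Mod(\Sigma)$-orbit of $\gamma$, which is exactly the set of simple closed curves on $\Sigma$ of the given topological type. Since the closed surface $\Sigma$ of genus $g\geq 2$ has only finitely many topological types of simple closed curves, letting $\gamma$ range over representatives of each type yields a uniform bound on $T(\rho(\alpha))$ over every simple closed curve $\alpha\subset\Sigma$.

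Bounded translation length on every simple closed curve $\alpha$ translates into a uniform bound on the well-defined squared-trace $|\tr^2 \rho(\alpha)|$. Choosing an optimal sequence of generators $(a_1,b_1,\ldots,a_g,b_g)$ of $\pi_1(\Sigma)$ (Example~\ref{exgen}), Fact~\ref{fact} shows the coordinate ring of $X(\Sigma)$ is generated by the trace functions associated to the products $a_{i_1}\cdots a_{i_k}$ with $1\leq i_1<\cdots<i_k\leq 2g$ and $k\leq 3$; by optimality of the generators, each such product is represented by a simple closed curve on $\Sigma$. Consequently, each generator of the coordinate ring is bounded on the orbit $\Mod(\Sigma)\cdot\rho$, so the orbit is bounded in $X(\Sigma)$. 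Lifting $\rho$ to $\SL_2(\C)$---directly if the $w_2$-obstruction vanishes, otherwise after passing to the double cover of $\Sigma$ where it does---and applying Theorem C (whose dihedral alternative is unavailable for $g\geq 2$), we conclude that $\rho$ is unitarizable. Equivalently, $\rho(\Gamma)$ fixes a point of $\HHH^3$ and may be conjugated into $\PSU(2)$.

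The main obstacle is the precise formulation of the harmonic map energy estimate in the first step. While the direction (energy dominates squared translation length) is standard folklore, care is required in the collar geometry and in the fact that the constant $C_\gamma$ must be independent of the representation so as to apply uniformly across the orbit. The passage between $\PSL_2(\C)$ and $\SL_2(\C)$ needed to invoke Theorem C is a routine cohomological matter.
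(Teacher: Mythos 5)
Your route is genuinely different from the paper's. The paper argues by elimination: a uniform energy bound makes the equivariant harmonic maps uniformly Lipschitz, hence bounds $T((\phi\cdot\rho)(\gamma))$ for any fixed curve $\gamma$ as $\phi$ ranges over $\Mod(\Sigma)$; if $\rho$ is non-elementary, Theorem \ref{gkm} produces simple curves $a,b$ with $i(a,b)=1$ and $\rho(a),\rho(b)$ loxodromic generating a Schottky group, so $T(\rho(\tw_a^n(b)))\to\infty$, a contradiction; the elementary non-unitary case is handled by the same twisting argument along the invariant geodesic, leaving only the unitary case. You instead convert the energy bound into boundedness of the $\Mod(\Sigma)$-orbit in the character variety and invoke Theorem C. Your length--energy inequality via Cauchy--Schwarz over a fixed collar (with constant independent of the representation), the change-of-coordinates reduction to finitely many topological types of simple closed curves, the bound $|\tr|\leq 2\cosh(T/2)$, and the generation of the coordinate ring by traces of simple closed curves via optimal generators and Fact \ref{fact} are all correct; this reduction is attractive because it reuses Theorem C rather than re-running the Gallo--Kapovich--Marden argument. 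The paper's proof, by contrast, stays entirely in $\PSL_2(\C)$ and never needs to lift.

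That last point is where your proposal has a genuine gap: the passage from $\PSL_2(\C)$ to $\SL_2(\C)$ is not the routine matter you claim. When $w_2(\rho)\neq 0$ (which does occur, e.g., for representations into $\PSU(2)$ not lifting to $\SU(2)$), your proposed fix is to pass to a double cover $\Sigma'\to\Sigma$. But your uniform bound on $T(\rho(\gamma))$ holds only for simple closed curves of $\Sigma$; a simple closed curve on $\Sigma'$ generally projects to a non-simple curve on $\Sigma$, and the $\Mod(\Sigma')$-orbit of $\rho|_{\pi_1(\Sigma')}$ involves mapping classes that do not descend to $\Sigma$, so neither the translation-length bounds nor the bounded-energy hypothesis transfers to the cover. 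Hence you cannot conclude that the $\Mod(\Sigma')$-orbit of the lifted restriction is bounded, and Theorem C cannot be applied there. A correct repair is to puncture rather than to cover: the restriction of $\rho$ to the free group $\pi_1(\Sigma\setminus\{p\})$ always lifts to $\SL_2(\C)$, with the boundary loop mapping to $\pm\mathbf 1$ (central); every simple closed curve on $\Sigma\setminus\{p\}$ is still simple on $\Sigma$, so your trace bounds persist over the whole $\Mod(\Sigma\setminus\{p\})$-orbit, the orbit of the lift in $X(\Sigma\setminus\{p\})$ is bounded, and Theorem C for a once-punctured surface of genus $g\geq 2$ gives unitarizability of the lift, hence of $\rho(\Gamma)$ since $\pi_1(\Sigma\setminus\{p\})\to\pi_1(\Sigma)$ is surjective. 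With that substitution your argument goes through.
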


\begin{proof} 
If $\rho(\Gamma)$ is elementary, but not unitary, then $\rho(\Gamma)$ fixes a geodesic in $\HHH^3$, and thus 
the image of any equivariant harmonic map coincides with this geodesic. Let $a$ be a simple closed curve mapped 
to an infinite order hyperbolic element in $\rho(\Gamma)$. Let $b$ be a simple closed on $\Sigma$ with $i(a,\,b) 
\,>\, 0$. Then the translation length of $\rho(\tw_a^n b)$ increases to infinity as $n \to \infty$. Hence the 
$\Mod(\Sigma)-$orbit of $\rho$ cannot have bounded energy, since an uniform energy bound on the harmonic maps 
implies that they are uniformly Lipschitz.
	
Otherwise, suppose $\rho(\Gamma)$ is non-elementary. By Theorem \ref{gkm}, there exist simple closed curves $a, 
\,b$ on $S$ such that $i(a,\,b) \,=\, 1$ and $\rho(a),\, \rho(b) \,\in\, \PSL_2(\C)$ are loxodromic and 
distinct. It follows as in the proof of Theorem \ref{rgg} that the translation lengths in $\HHH^3$ of 
$\rho(\tw_a^n (b))$ tend to infinity, while $\tw_a^n (a)$ remains fixed. Hence the $\Z-$orbit $\rho 
\circ\tw_a^n$ of $\rho$ under $\langle \tw \rangle$ cannot have bounded energy. Thus the $\Mod(\Sigma)-$orbit 
of $\rho$ cannot have bounded energy, as before.
	
The possibility that remains is that $\rho(\Gamma)$ is unitary. 
\end{proof}

\appendix

\section{Case of once-punctured torus} \label{appendix-a}
In this appendix, we describe how the work of Dubrovin--Mazzocco \cite{dm} can be used to prove the once-punctured torus case of our Theorem B. We begin with the following well-known observation.

\begin{lemma}
\label{tracelem}
A pair $(a,b)$ of elements in $\SL_2(\C)$ has a common eigenvector in $\C^2$, or in other words lies in the standard Borel $B$ up to simultaneous conjugation, if and only if $\tr([a,b])=2$, where $[a,b]=aba^{-1}b^{-1}$.
\end{lemma}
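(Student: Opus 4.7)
The plan is to prove both implications by direct computation in $\SL_2(\C)$, reducing $a$ to Jordan normal form under simultaneous conjugation.

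The \emph{only if} direction is immediate: if $a$ and $b$ share a common eigenvector they are simultaneously conjugate to upper-triangular matrices, and a one-line calculation shows that the commutator of two upper-triangular elements of $\SL_2(\C)$ is upper-triangular with both diagonal entries equal to $1$, giving $\tr([a,b]) = 2$.

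For the converse, I would either record (or briefly derive via Cayley--Hamilton) the classical Fricke trace identity
$$\tr([a,b]) = (\tr a)^2 + (\tr b)^2 + (\tr ab)^2 - (\tr a)(\tr b)(\tr ab) - 2,$$
and then run a case analysis on the conjugacy class of $a$. Up to $\SL_2(\C)$-conjugation, $a$ is one of: (i) $\pm I$; (ii) $\diag(\lambda,\lambda^{-1})$ with $\lambda \neq \pm 1$; (iii) $\pm\bigl(\begin{smallmatrix}1 & 1\\0 & 1\end{smallmatrix}\bigr)$. Case (i) is trivial since $[a,b]=I$ and every vector is an eigenvector of $a$. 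Writing $b = \bigl(\begin{smallmatrix}p & q\\ r & s\end{smallmatrix}\bigr)$ with $ps-qr=1$ and multiplying out, one finds
$$\tr([a,b]) = 2 - (\lambda-\lambda^{-1})^2 qr \quad\text{in case (ii)}, \qquad \tr([a,b]) = 2 + r^2 \quad\text{in case (iii)}.$$
Imposing $\tr([a,b])=2$ then forces $qr=0$ in case (ii) (since $\lambda \neq \pm 1$) and $r=0$ in case (iii). In either case $b$ becomes triangular and thus shares one of the standard basis eigenvectors of $a$, proving the claim.

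There is no substantive obstacle here — the lemma is a routine matrix computation once $a$ is put in normal form. The only care needed is to separate the parabolic case (iii) from the semisimple non-central case (ii), since these produce different multipliers ($(\lambda-\lambda^{-1})^2$ versus $1$) of the crucial off-diagonal entries, and to dispose of the trivial central case (i) separately. These three cases already appear in the same spirit in the Dehn-twist analysis of Section \ref{sect:4}, so the bookkeeping is standard.
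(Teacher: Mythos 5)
Your proof is correct and complete: the computations $\tr([a,b]) = 2 - (\lambda-\lambda^{-1})^2 qr$ in the semisimple non-central case and $\tr([a,b]) = 2 + r^2$ in the parabolic case both check out, and the three normal-form cases exhaust $\SL_2(\C)$ up to conjugacy. Note that the paper itself states this lemma as a ``well-known observation'' and supplies no proof at all, so there is nothing to compare against; your case analysis is the standard argument and is consistent with the Fricke identity $\tr([a,b]) = \tr(a)^2+\tr(b)^2+\tr(ab)^2-\tr(a)\tr(b)\tr(ab)-2$ that the paper derives immediately afterwards. One minor remark: you invoke that Fricke identity at the start of the converse but never actually use it --- your direct computation of $\tr([a,b])$ in each normal form makes it redundant, so you could omit it without loss.
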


Let $\Sigma$ be a surface of genus one with one puncture. Let $(a,b,c)$ be an optimal sequence of generators for $\pi_1(\Sigma)$, as defined in Example \ref{exgen}. Let $X=X(\Sigma)$ be the character variety of $\Sigma$. Note that $\pi_1(\Sigma)$ is freely generated by $a$ and $b$. The trace functions on $X(\Sigma)$ furnish an isomorphism
$$(x_1,x_2,x_3)=(\tr_{a},\tr_{b},\tr_{ab}):X(\Sigma)\xrightarrow{\sim}\A^3$$
by Fricke (see \cite{goldman2} for details). Observing that $\tr(\mathbf1)=2$ for the identity $\mathbf1\in\SL_2(\C)$ and that $\tr(A)\tr(B)=\tr(AB)+\tr(AB^{-1})$ for any $A,B\in\SL_2(\C)$, we have
\begin{align*}
\tr_c&=\tr_{aba^{-1}b^{-1}}=\tr_{aba^{-1}}\tr_{b^{-1}}-\tr_{aba^{-1}b}\\
&=\tr_{b}^2-\tr_{ab}\tr_{a^{-1}b}+\tr_{aa}=\tr_{b}^2-\tr_{ab}(\tr_{a^{-1}}\tr_{b}-\tr_{ab})+\tr_{a}^2-\tr_{\mathbf1}\\
&=\tr_{a}^2+\tr_{b}^2+\tr_{ab}^2-\tr_{a}\tr_{b}\tr_{ab}-2.
\end{align*}
In particular, under the identification $(x_1,x_2,x_3)=(\tr_{a},\tr_{b},\tr_{ab})$ of the coordinate functions above and Lemma \ref{tracelem}, we see that the locus of reducible representations in $X(\Sigma)=\A^3$ is the cubic algebraic surface cut out by the equation
$$x_1^2+x_2^2+x_3^2-x_1x_2x_3-2=2.$$
The mapping class group $\Mod(\Sigma)$ acts on $X(\Sigma)$ via polynomial transformations.  For convenience, we shall denote the isotopy classes of simple closed curves lying in the free homotopy classes of $a$, $b$, and $ab$ by the same letters. We have the following descriptions of the associated Dehn twist actions.

\begin{lemma}
\label{dehnt}
The Dehn twist actions $\tw_a$, $\tw_b$, and $\tw_{ab}$ on $X(\Sigma)$ are given by
\begin{align*}
\tw_{a}^*&:(x_1,x_2,x_3)\mapsto (x_1,x_3,x_1x_3-x_2),\\
\tw_{b}^*&:(x_1,x_2,x_3)\mapsto (x_1x_2-x_3,x_2,x_1),\\
\tw_{ab}^*&:(x_1,x_2,x_3)\mapsto(x_2,x_2x_3-x_1,x_3).
\end{align*}
in terms of the above coordinates.
\end{lemma}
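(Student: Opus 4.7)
The plan is a direct pullback computation using the action of Dehn twists on the fundamental group together with Fricke's basic trace identities in $\SL_2(\C)$. Recall that the $\Mod(\Sigma)$-action on $X(\Sigma)$ is by precomposition, so for any $\varphi\in\Mod(\Sigma)$ and any conjugacy class $\alpha\in\pi_1(\Sigma)$ one has $\tr_{\alpha}\circ\varphi^{*}=\tr_{\varphi_{*}(\alpha)}$. Applied with $\varphi\in\{\tw_a,\tw_b,\tw_{ab}\}$ and $\alpha\in\{a,b,ab\}$, this reduces the lemma to identifying nine explicit traces in terms of $x_1,x_2,x_3$.

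First I would record the action of each Dehn twist on the free homotopy classes of $a$, $b$, $ab$. With the standard left-twist orientation convention, reading off the loop configuration $L_{1,1}$, one has $\tw_a(a)=a$, $\tw_a(b)=ab$; $\tw_b(a)=ab^{-1}$, $\tw_b(b)=b$; and $\tw_{ab}(ab)=ab$, $\tw_{ab}(a)$ freely homotopic to $b$, $\tw_{ab}(b)$ freely homotopic to $bab$. Each assignment can be drawn directly on a fundamental polygon for $\Sigma$, and cross-checked by confirming that the peripheral commutator $[a,b]$ is preserved up to conjugacy.

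Next I would convert the resulting words into polynomial expressions in $(x_1,x_2,x_3)$ using the two identities $\tr(XY)+\tr(XY^{-1})=\tr(X)\tr(Y)$ and $\tr(X^{2}Y)=\tr(X)\tr(XY)-\tr(Y)$, valid for $X,Y\in\SL_2(\C)$. This yields the three nontrivial entries $\tr_{a^{2}b}=x_1x_3-x_2$ for $\tw_a^{*}$, $\tr_{ab^{-1}}=x_1x_2-x_3$ for $\tw_b^{*}$, and $\tr_{bab}=\tr(B^{2}A)=x_2x_3-x_1$ for $\tw_{ab}^{*}$. The remaining entries are immediately one of $x_1,x_2,x_3$ from the word-level formulas above.

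No substantive obstacle is anticipated; the only delicate point is fixing the orientation convention (left versus right Dehn twists, and the sign in $\tw_b(a)=ab^{-1}$) consistently so that the three formulas match the statement exactly. An alternative convention would change the formulas only by automorphisms of the triple $(x_1,x_2,x_3)$ preserving the Fricke cubic $x_1^{2}+x_2^{2}+x_3^{2}-x_1x_2x_3=\tr_c+2$, and could be absorbed by relabeling the optimal generators.
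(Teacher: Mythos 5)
Your proposal is correct and follows essentially the same route as the paper: record the effect of each Dehn twist on the free homotopy classes of $a$, $b$, $ab$, then convert via the Fricke identities $\tr(XY)+\tr(XY^{-1})=\tr(X)\tr(Y)$ and $\tr(X^2Y)=\tr(X)\tr(XY)-\tr(Y)$. The paper carries this out explicitly only for $\tw_a^*$ (via $\tw_a(ab)\sim aab$ and $\tr_{aab}=\tr_a\tr_{ab}-\tr_b$) and declares the other two twists similar, whereas you supply all three word-level computations, including the consistency check on the orientation convention.
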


\begin{proof}
Note that $\tw_a(a)$ has the homotopy class of $\alpha$, $\tw_a(b)$ has the homotopy class of $\alpha\beta$, and $\tw_a(ab)$ has the homotopy class of $\alpha\alpha\beta$. Noting that $\tr_{\alpha\alpha\beta}=\tr_{\alpha}\tr_{\alpha\beta}-\tr_{\beta}$, we obtain the desired expression for $\tw_a^*$. The other Dehn twists are similar.
\end{proof}

Let $\Pi$ be the group of polynomial automorphisms of $\A^3$ generated by $\tw_{a}^*$, $\tw_{b}^*$, and $\tw_{ab}^*$. It is precisely the image of the mapping class group $\Mod(\Sigma)$ in the group of polynomial automorphisms of $X(\Sigma)=\A^3$. Let $\Pi'$ be the group generated by $\Pi$ together with the following transformations:
\begin{align*}
\sigma_{12}:(x_1,x_2,x_3)\mapsto(-x_1,-x_2,x_3),\\
\sigma_{23}:(x_1,x_2,x_3)\mapsto(x_1,-x_2,-x_3),\\
\sigma_{13}:(x_1,x_2,x_3)\mapsto(-x_1,x_2,-x_3).
\end{align*}
It is easy to see that $[\Pi':\Pi]<\infty$. Hence, a point in $\A^3$ has finite $\Pi$-orbit if and only if it has finite $\Pi'$-orbit. Now, the group $\Pi'$ contains a group generated by transformations
\begin{align*}
\beta_1=\sigma_{12}\tw_{ab}^*(\tw_{b}^*\tw_{a}^*)^{-1}&:(x_1,x_2,x_3)\mapsto(-x_1,x_3-x_1x_2,x_2),\\
\beta_2=\sigma_{23}\tw_{a}^*(\tw_{b}^*\tw_{a}^*)^{-1}&:(x_1,x_2,x_3)\mapsto(x_3,-x_2,x_1-x_2x_3).
\end{align*}
whose finite orbits in $\A^3$ were studied by Dubrovin-Mazzocco \cite[Theorem 1.6]{dm} in connection with algebraic solutions of special Painlev\'e VI equations. They defined a triple $(x_1,x_2,x_3)\in\A^3(\C)$ to be \emph{admissible} if it has at most one coordinate zero and $x_1^2+x_2^2+x_2^2-x_1x_2x_3-2\neq2$. It is easy to verify that the admissible points are precisely those which do not correspond to reducible or special dihedral representations. The result of \cite{dm} we shall use is the following.

\begin{theorem}[Dubrovin-Mazzocco]
\label{dubm}
The following is a complete set of representatives for the finite $\langle\beta_1,\beta_2\rangle$-orbits of admissible triples in $\A^3$:
\begin{align*}
&(0,-1,-1), (0,-1,-\sqrt{2}),(0,-1,-\varphi),(0,-1,-\varphi^{-1}),(0,-\varphi,-\varphi^{-1})
\end{align*}
where $\varphi=(1+\sqrt{5})/2$ is the golden ratio.
\end{theorem}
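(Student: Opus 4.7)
The plan is to combine an invariance computation, a growth estimate on the Markov--Cayley cubic surface, and a Kronecker-type arithmetic argument, in the same spirit as the proofs of Theorems~A and~B. First, I would verify by direct computation that the polynomial
\[
F(x_1,x_2,x_3)\;=\;x_1^2+x_2^2+x_3^2-x_1x_2x_3
\]
is preserved by both $\beta_1$ and $\beta_2$. Any finite $\langle\beta_1,\beta_2\rangle$-orbit therefore lies on a level set $\{F=k\}$, and the admissibility condition amounts to $k\neq 4$ (since $\tr_c=F-2$, so $F=4$ marks reducibility by Lemma~\ref{tracelem}) together with at most one coordinate being zero. Moreover, $\Pi'$ contains, with finite index, the three classical Vieta involutions $s_i:x_i\mapsto x_jx_k-x_i$ of the cubic surface $\{F=k\}$ (composed with sign flips), placing the problem in the well-studied framework of polynomial dynamics on Markov-type surfaces.

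Next comes a dynamical growth step. A standard estimate on $\{F=k\}$ shows that if any coordinate of a point satisfies $|x_i|>2$, then alternating the Vieta involutions $s_j$ ($j\neq i$) produces a sequence of iterates whose maximum coordinate diverges to infinity. Consequently every point of a finite orbit satisfies $|x_i|\leq 2$ for all $i$. Since Galois conjugation carries a finite $\langle\beta_1,\beta_2\rangle$-orbit to a finite orbit of the same cardinality and same $F$-value, \emph{all} Galois conjugates of each $x_i$ also lie in $[-2,2]$. Kronecker's theorem then forces each $x_i$ to be a totally real algebraic integer of the form $2\cos(\pi q_i)$ for some $q_i\in\Q$; in particular $k=F$ is a totally real algebraic integer lying in $[-2,4)$.

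The remaining and hardest step is the explicit enumeration. By Fricke's isomorphism, admissible triples with coordinates of the form $2\cos(\pi q_i)$ correspond to conjugacy classes of irreducible non-special-dihedral representations $\pi_1(\Sigma)\to\SL_2(\C)$ whose image, by the cyclotomic structure of the coordinates, is contained in a finite subgroup. Invoking the classification in Section~\ref{sect:2.1}, the image lies in $BA_4$, $BS_4$, $BA_5$, or a binary dihedral subgroup (the latter being excluded by admissibility together with the requirement that the triple be outside the special-dihedral locus). Since the nontrivial traces in these groups take values only in $\{0,\pm1,\pm\sqrt{2},\pm\varphi,\pm\varphi^{-1}\}$, one enumerates generating pairs $(A,B)$ up to the combined action of simultaneous conjugation and $\Pi'$. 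The main obstacle is this finite but delicate case analysis; carrying it out yields exactly the five representatives, with $(0,-1,-1)$ arising from $BA_4$, $(0,-1,-\sqrt{2})$ from $BS_4$, and the three remaining triples $(0,-1,-\varphi),\,(0,-1,-\varphi^{-1}),\,(0,-\varphi,-\varphi^{-1})$ from $BA_5$, completing the classification.
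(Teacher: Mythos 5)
First, a point of comparison: the paper does not prove Theorem \ref{dubm} at all --- it is imported from Dubrovin--Mazzocco \cite{dm}, and the Remark following Corollary \ref{admcor} records that \cite{dm} contains two proofs, one by explicit analysis of trigonometric Diophantine equations and one, following a suggestion of Vinberg, via representations of Coxeter reflection groups. So there is no in-paper argument to measure yours against; your proposal would have to stand as an independent proof, and as written it does not.

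The genuine gap is in your final step. From ``each $x_i$ is of the form $2\cos(\pi q_i)$'' you conclude that the corresponding representation has image contained in a finite subgroup ``by the cyclotomic structure of the coordinates.'' This is false: two finite-order elliptic elements $A,B\in\SU(2)$ can be chosen (by varying the angle between their rotation axes) so that $\tr(AB)$ is also of the form $2\cos(\pi r)$ with $r\in\Q$ while $\langle A,B\rangle$ is infinite, indeed dense in $\SU(2)$. Deciding which cyclotomic trace triples actually yield finite groups is precisely the hard trigonometric-Diophantine content of \cite{dm}; your argument assumes it. A correct route along these lines must use the full orbit, not a single triple: finiteness of the orbit forces finite monodromy along \emph{every} simple closed curve (via the eigenvalue argument of Lemma \ref{type1}, which also repairs your conflation of $|x_i|\le 2$ with $x_i\in[-2,2]$ and gives integrality for free, since each coordinate becomes $\zeta+\zeta^{-1}$ for a root of unity $\zeta$), and one then needs Theorem \ref{psw} --- a genuinely nontrivial input from \cite{psw} --- to pass from finite monodromy on all simple loops to finite image. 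Only after that does your enumeration of generating pairs in $B A_4$, $B S_4$, $B A_5$ become the finishing step (essentially Corollary \ref{admcor} run in reverse). Secondary weaknesses: the growth estimate on $\{F=k\}$ you invoke is standard only for real points, whereas admissible triples are a priori complex, and ``all Galois conjugates lie in $[-2,2]$'' does not follow from a bound on complex absolute values.
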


To deduce Theorem B in the once-punctured torus case from the above, we recall the following explicit description of the finite subgroups $B A_4$, $B S_4$, $B A_5$ of $\SL_2(\C)$. First, let us identify the group of unit quaternions
$$\Sp(1)=\{z=(a,b,c,d)=a+bi+cj+dk\in\H:|z|=a^2+b^2+c^2+d^2=1\}$$
as a subgroup of $\SL_2(\C)$ by the map
$$z=(a,b,c,d)\mapsto \begin{bmatrix}a+bi & c+di\\ -c+di & a-bi\end{bmatrix}.$$
Under the identification, the \emph{binary tetrahedral group} $BA_4$ is given by
$$B A_4=\{\pm1,\pm i,\pm j,\pm k,(\pm1\pm i\pm j\pm k)/2\}$$
with all sign combinations taken in the above. The \emph{binary octahedral group} $B S_4$ is the union of $B A_4$ with all quaternions obtained from $(\pm1,\pm1,0,0)/\sqrt 2$ by all permutations of coordinates and all sign combinations. The \emph{binary icosahedral group} $B A_5$ is the union of $B A_4$ with all quaternions obtained from $(0,\pm1,\pm\varphi^{-1},\pm\varphi)/2$
by an even permutation of coordinates and all possible sign combinations, where $\varphi=(1+\sqrt 5)/2$ is the golden ratio.

\begin{corollary}
\label{admcor}
If $(x_1,x_2,x_3)\in\A^3(\C)$ is an admissible triple with finite $\Mod(\Sigma)$-orbit, then it corresponds to a representation $\rho:\pi_1(\Sigma)\to \SL_2(\C)$ with  finite image.
\end{corollary}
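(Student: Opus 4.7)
The plan is to combine the Dubrovin--Mazzocco classification in Theorem \ref{dubm} with a direct verification that each representative triple corresponds to a finite representation, and then argue that finiteness of image is preserved under the transformations generating $\Pi'$.

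First, I would observe that if $(x_1,x_2,x_3)\in\A^3(\C)$ has finite $\Mod(\Sigma)$-orbit, then its $\Pi$-orbit is finite, and hence (since $[\Pi':\Pi]<\infty$) its $\Pi'$-orbit is finite. In particular, its $\langle\beta_1,\beta_2\rangle$-orbit is finite. By Theorem \ref{dubm}, there exists $\gamma\in\langle\beta_1,\beta_2\rangle\subset\Pi'$ such that $\gamma\cdot(x_1,x_2,x_3)$ is one of the five listed admissible triples. Thus it suffices to prove two things: (i) that finiteness of image of the associated semisimple representation is preserved by the action of $\Pi'$, and (ii) that each of the five listed representative triples corresponds to a representation with finite image.

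For (i), the subgroup $\Pi\subset\Pi'$ acts via the mapping class group, hence by precomposition with automorphisms of $\pi_1(\Sigma)$, which trivially preserves image. For the extra generators $\sigma_{12},\sigma_{23},\sigma_{13}$, I would observe that these correspond to tensoring $\rho$ with one of the four characters $\pi_1(\Sigma)\to\{\pm1\}$ obtained by sending some subset of $\{a,b\}$ to $-1$ (matching the sign changes in $\tr_a,\tr_b,\tr_{ab}$). Since $\{\pm\mathbf1\}\subset\SL_2(\C)$ is finite, tensoring by such a character sends a finite-image representation to a finite-image representation. Combined with the fact that two semisimple $\SL_2(\C)$-representations having the same character are conjugate, this shows finiteness is a property of the $\Pi'$-orbit in $X(\Sigma)$.

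For (ii), I would use the explicit description of the binary polyhedral groups $BA_4, BS_4, BA_5\subset\Sp(1)\subset\SL_2(\C)$ recalled just before the corollary, where the trace of a unit quaternion $a+bi+cj+dk$ is $2a$. Each listed triple is then realized by an explicit pair of quaternions: for instance, $(0,-1,-1)$ by $A=i$ and $B=(-1+i+j+k)/2$, both lying in $BA_4$; the triples $(0,-1,-\varphi)$, $(0,-1,-\varphi^{-1})$, and $(0,-\varphi,-\varphi^{-1})$ are realized inside $BA_5$ using the quaternions with coordinates $(0,\pm1,\pm\varphi^{-1},\pm\varphi)/2$ and the $BA_4$ elements; and $(0,-1,-\sqrt2)$ is realized inside $BS_4$ using $(\pm1,\pm1,0,0)/\sqrt2$ and $BA_4$ elements. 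In each case $\langle A,B\rangle$ lies in a finite subgroup of $\SL_2(\C)$, and since the irreducible admissible representation $\rho$ is determined up to conjugacy by its character (by Fricke), the conclusion follows.

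The main technical obstacle is the bookkeeping in step (ii): one must check that the quaternions $A,B$ chosen for each triple actually give $\tr(AB)$ equal to the listed third coordinate, not merely that $\tr A$ and $\tr B$ match. A secondary (but easy) subtlety is verifying that the three sign involutions $\sigma_{ij}$ really do correspond to twisting by sign characters of $\pi_1(\Sigma)$ rather than something more exotic, which is immediate from the formula $\tr_{ab}\mapsto\chi(a)\chi(b)\tr_{ab}$ for a character $\chi:\pi_1(\Sigma)\to\{\pm1\}$.
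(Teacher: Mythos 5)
Your proposal is correct and follows essentially the same route as the paper: reduce via Theorem \ref{dubm} to the five representative admissible triples (allowing for the sign involutions $\sigma_{ij}$, which the paper absorbs by noting $-\mathbf 1$ lies in each binary polyhedral group and you handle equivalently by tensoring with a sign character), and then realize each representative by an explicit pair of elements of $BA_4$, $BS_4$, or $BA_5$. The only difference is that the paper writes out all five matrix pairs explicitly, which is exactly the bookkeeping you flag in step (ii).
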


\begin{proof}[Proof of the corollary]
Replacing $(x_1,x_2,x_3)$ by another triple within its $\Mod(\Sigma)$-orbit if necessary, we may assume that $(x_1,x_2,x_3)$ is one of the triples in Theorem \ref{dubm} or its image under one of the transformations $\sigma_{12}$, $\sigma_{23}$, or $\sigma_{13}$. We shall show that
$$(x_1,x_2,x_3)=(\tr A,\tr B,\tr(AB))$$
where $A,B\in\SL_2(\C)$ are elements that together lie in one of the finite subgroups $B A_4$, $BS_4$, or $BA_5$ of $\SL_2(\C)$. Since the matrix $-\mathbf 1$ is contained in every one of these groups, it suffices to treat the case where $(x_1,x_2,x_3)$ is one of the triples in Theorem \ref{dubm}. By explicit computation, we find that the triples in Theorem \ref{dubm} respectively correspond to traces of the triples of matrices
\begin{align*}
&\left(\begin{bmatrix}0 & 1\\ -1 & 0\end{bmatrix},\begin{bmatrix}-\frac{1}{2}(1+i) & \frac{1}{2}(1-i)\\ -\frac{1}{2}(1+i) & -\frac{1}{2}(1-i)\end{bmatrix},\begin{bmatrix}-\frac{1}{2}(1+i) & -\frac{1}{2}(1-i)\\ \frac{1}{2}(1+i) & -\frac{1}{2}(1-i)\end{bmatrix}\right),\\
&\left(\begin{bmatrix}0 & \frac{1}{\sqrt{2}}(1-i)\\ \frac{1}{\sqrt 2}(1+i) & 0\end{bmatrix},\begin{bmatrix}-\frac{1}{2}(1+i) & \frac{1}{2}(1-i)\\ -\frac{1}{2}(1+i) & -\frac{1}{2}(1-i)\end{bmatrix},\begin{bmatrix}-\frac{1}{\sqrt 2} & \frac{1}{\sqrt 2}i\\ \frac{1}{\sqrt 2 }i & -\frac{1}{\sqrt 2}\end{bmatrix}\right),\\
&\left(\begin{bmatrix}0 & 1\\ -1 & 0\end{bmatrix},\begin{bmatrix}-\frac{1}{2} & \frac{1}{2}(\varphi+\varphi^{-1}i)\\ -\frac{1}{2}(\varphi-\varphi^{-1}i) & -\frac{1}{2}\end{bmatrix},\begin{bmatrix}-\frac{1}{2}(\varphi-\varphi^{-1}i) & -\frac{1}{2}\\ \frac{1}{2} & -\frac{1}{2}(\varphi+\varphi^{-1}i)\end{bmatrix}\right),\\
&\left(\begin{bmatrix}0 & 1\\ -1 & 0\end{bmatrix},\begin{bmatrix}\frac{1}{2}(1-\varphi i) & \frac{1}{2}\varphi^{-1}\\ -\frac{1}{2}\varphi^{-1} & -\frac{1}{2}(1+\varphi i)\end{bmatrix},\begin{bmatrix}-\frac{1}{2}\varphi^{-1} & -\frac{1}{2}(1+\varphi i)\\ -\frac{1}{2}(1-\varphi i) & -\frac{1}{2}\varphi^{-1}\end{bmatrix}\right),\\
&\left(\begin{bmatrix}0 & 1\\ -1 & 0\end{bmatrix},\begin{bmatrix}-\frac{\varphi}{2} &\frac{\varphi^{-1}}{2}+\frac{1}{2}i\\-\frac{\varphi^{-1}}{2}+\frac{1}{2}i & -\frac{\varphi}{2}\end{bmatrix},\begin{bmatrix}-\frac{\varphi^{-1}}{2}+\frac{1}{2}i & -\frac{\varphi}{2}\\\frac{\varphi}{2} &-\frac{\varphi^{-1}}{2}-\frac{1}{2}i\end{bmatrix}\right),
\end{align*}
where $\varphi=(1+\sqrt 5)/2$ is the golden ratio. The matrices for the first triple all lie in the binary tetrahedral group $BA_4$, the matrices for the second triple all lie in the binary octahedral group $BS_4$, and the matrices for the remaining three triples all lie in the binary icosahedral group $BA_5$. In each triple, the third matrix is the product of the first two. Thus, each of the triples in Theorem \ref{dubm} correspond to representations $\pi_1(\Sigma)\to\SL_2(\C)$ with finite image, proving the corollary.
\end{proof}

\begin{remark}
In \cite{dm}, two proofs of Theorem \ref{dubm} are given. The first proof is based on an explicit analysis of certain relevant trigonometric Diophantine equations. General equations of this type are effectively solvable by Lang's $\G_m$ conjecture (proved by Laurent \cite{laurent}), as noted in \cite{bgs}. The second proof in \cite{dm}, based on a suggestion of Vinberg, uses consideration of certain representations of Coxeter groups of reflections associated to admissible triples. Both methods use special features present in the once-punctured torus case which do not seem to generalize easily to the case of general surfaces treated in our work.
\end{remark}

\section*{Acknowledgments} Theorems A and B along with parts of this paper originally appeared in Chapter 6 of 
JPW's Ph.D.~thesis \cite{whang0}, but the proof had a gap which has been filled in this paper. JPW thanks Peter 
Sarnak and Phillip Griffiths for encouragement, and Anand Patel and Ananth Shankar for collaborative work in 
\cite{psw} which led to one of the key ingredients in this paper. IB and MM thank Bill Goldman for useful email 
correspondence, in particular for alerting us to dihedral representations. We thank the anonymous referee for a 
careful reading and useful comments. IB and MM are 
supported in part by  the Department of Atomic Energy, Government of India, under project no.12-R\&D-TFR-5.01-0500. 
Research of IB partly supported by a DST JC Bose Fellowship. SG 
acknowledges the SERB, DST (Grant no. MT/2017/000706) and the Infosys Foundation for their support. 
MM is   supported in part
 by an endowment of the Infosys Foundation,
a DST JC Bose Fellowship, Matrics research project grant  MTR/2017/000005,
and CEFIPRA  project No. 5801-1. MM was also partially supported by the grant 346300 for IMPAN from the Simons Foundation 
and the matching 2015-2019 Polish MNiSW fund.

\end{document}